\renewcommand\mathcal{\mathscr}
\theoremstyle{plain}
\newtheorem{theorem}{Theorem}[section]
\newtheorem*{theorem*}{Theorem}
\newtheorem{lemma}[theorem]{Lemma}
\newtheorem*{lemma*}{Lemma}
\newtheorem{corollary}[theorem]{Corollary}
\newtheorem{proposition}[theorem]{Proposition}
\theoremstyle{remark}
\newtheorem{remark}[theorem]{Remark}
\newtheorem*{remark*}{Remark}
\theoremstyle{definition}
\newtheorem{definition}[theorem]{Definition}
\newtheorem*{definition*}{Definition}
\numberwithin{equation}{section}
\newcommand\quant{\advance\quantno by1
                      \ifnum\quantno=1\qquad\else\quad\fi\forall }
\newcommand\itemno[1]{(\romannumeral #1)}
\renewcommand\Re{\operatorname{\mathrm{Re}}}
\renewcommand\Im{\operatorname{\mathrm{Im}}}
\newcommand\Dom{\mathrm{Dom}}
\newcommand\rest[1]{\kern-.1em
          \lower.5ex\hbox{$\scriptstyle #1$}\kern.05em}
\newcommand\set[1]{{\left\{#1\right\}}}
\renewcommand\mod[1]{\vert{#1}\vert}
\newcommand\bigmod[1]{\bigl\vert{#1}\bigr|}
\newcommand\Bigmod[1]{\Bigl\vert{#1}\Bigr|}
\newcommand\norm[2]{{\Vert{#1}\Vert_{#2}}}
\newcommand\bignorm[2]{\left.{\bigl\Vert{#1}\bigr\Vert_{#2}}\right.}
\newcommand\bignormto[3]{\left.{\bigl\Vert{#1}\bigr\Vert_{#2}^{#3}}\right.}
\newcommand\Bignorm[2]{\left.{\Bigl\Vert{#1}\Bigr\Vert_{#2}}\right.}
\newcommand\bigopnorm[2]{\big|\!\big|\!\big| {#1} \big|\!\big|\!\big|_{#2}}
\newcommand\prodo[2]{\left\langle#1,#2\right\rangle}
\newcommand\wrt{\,\text{\rm d}}
\newcommand\opL{\operatorname{\mathcal{L}}}
\newcommand\BC{\mathbb{C}}
\newcommand\BD{\mathbb{D}}
\newcommand\BR{\mathbb{R}}
\newcommand\BX{\mathbb{X}}
\newcommand\BZ{\mathbb{Z}}
\newcommand\fra{\mathfrak{a}}  
\newcommand\cB{\mathcal{B}}   
\newcommand\frb{\mathfrak{b}}
\newcommand\cD{\mathcal{D}}  
\newcommand\cE{\mathcal{E}}
\newcommand\frg{\mathfrak{g}} 
\newcommand\cH{\mathcal{H}}   
\newcommand\frh{\mathfrak{h}} 
\newcommand\cI{\mathcal{I}}   
\newcommand\cJ{\mathcal{J}}
   \newcommand\frk{\mathfrak{k}} 
\newcommand\cL{\mathcal{L}}    
  \newcommand\fM{\mathfrak{M}} \newcommand\frm{\mathfrak{m}}  
\newcommand\cO{\mathcal{O}}   \newcommand\fro{\mathfrak{o}} 
\newcommand\cP{\mathcal{P}}   \newcommand\frp{\mathfrak{p}}  
\newcommand\cR{\mathcal{R}}
\newcommand\cU{\mathcal{U}}
  \newcommand\fX{\mathfrak{X}}  
\newcommand\cY{\mathcal{Y}}
\newcommand\astar{\fra^*}
\newcommand\astarc{\fra_{\BC}^*}
\newcommand\al{\alpha}
\newcommand\be{\beta}
\newcommand\ga{\gamma}    
\newcommand\de{\delta}
\newcommand\la{\lambda}   
\newcommand\om{\omega}      
\newcommand\si{\sigma}    
\newcommand\te{\theta}
\newcommand\vp{\varphi}
\newcommand\OV{\overline}
\newcommand\funnyk{k\hbox to 0pt{\hss\phantom{g}}}
\newcommand\lu[1]{L^1(#1)}
\newcommand\lp[1]{L^p(#1)}
\newcommand\ld[1]{L^2(#1)}
\newcommand\ly[1]{L^\infty(#1)}
\newcommand\hu[1]{H^1(#1)}
\newcommand\wthuR[1]{\widetilde H_{\cR}^1(#1)}
\newcommand\huR[1]{H_{\cR}^1(#1)}
\newcommand\huH[1]{H_{\cH}^1(#1)}
\newcommand\huP[1]{H_{\cP}^1(#1)}
\newcommand\Xh[1]{X^k(#1)}
\newcommand\ghu[1]{{\frh}^1(#1)}
\newcommand\gbmo[1]{{\frb\frm\fro}(#1)}
\newcommand\gXum[1]{\fX^{1/2}(#1)}
\newcommand\gXu[1]{{\fX}^1(#1)}
\newcommand\gXh[1]{{\fX}^k(#1)}
\newcommand\gXga[1]{\fX^{\ga}(#1)}
\newcommand\Xxat[1]{\fX_{\mathrm{at}}^k(#1)}
\newcommand\Xnat{\fX_{\mathrm{at}}^k}
\newcommand\wh{\widehat}
\newcommand\wt{\widetilde}
\newcommand\whH{\widehat{\phantom{G}}\hbox to 0pt{\hss $H$}}
\newcommand\emspace{\hbox to 6pt{\hss}}
\newcommand\ds{\displaystyle}
\newcommand\rmi{\hbox{\rm (i)}}
\newcommand\rmii{\hbox{\rm (ii)}}
\newcommand\rmiii{\hbox{\rm (iii)}}
\newcommand\rmiv{\hbox{\rm (iv)}}
\newcommand\rmv{\hbox{\rm (v)}}
\newcommand\rmvi{\hbox{\rm (vi)}}
\newcommand\ir{\int_{-\infty}^{\infty}}
\newcommand\e{\mathrm{e}}
\newcommand\supp{\mathrm{supp}}
\DeclareSymbolFont{EUEX}{U}{euex}{m}{n}
\DeclareSymbolFont{euexlargesymbols}{U}{euex}{m}{n}
\DeclareMathSymbol{\intop}{\mathop}{euexlargesymbols}{"52}
     \def\int{\intop\nolimits}
\DeclareSymbolFont{euexsymbols}     {U}{euex}{m}{n}
\DeclareMathSymbol{\smallint}{\mathop}{euexsymbols}{"52}
\begin{document}

\title[A family of Hardy type spaces]{A family of Hardy type spaces\\ on nondoubling manifolds}  

\subjclass[2010]{42B20, 42B30, 42B35, 58C99} 

\keywords{Hardy space, atom, noncompact manifold, exponential growth, Riesz transform.}

\thanks{Work partially supported by PRIN 2015 ``Real and complex manifolds: 
geometry, topology and harmonic analysis'' and by the EPSRC Grant EP/P002447/1.
The authors are members of the Gruppo Nazionale per l'Analisi Matematica, 
la Probabilit\`a e le loro Applicazioni (GNAMPA) of the Istituto 
Nazionale di Alta Matematica (INdAM)
}

\author[A. Martini]{Alessio Martini}
\address[Alessio Martini]{
School of Mathematics \\ University of Birmingham\\ Edgbaston \\ Birmingham \\ B15 2TT  \\ United Kingdom}
\email{a.martini@bham.ac.uk}

\author[S. Meda]{Stefano Meda}
\address[Stefano Meda]{Dipartimento di Matematica e Applicazioni \\ Universit\`a di Milano-Bicocca\\
via R.~Cozzi 53\\ I-20125 Milano\\ Italy}
\email{stefano.meda@unimib.it}

\author[M. Vallarino]{Maria Vallarino}
\address[Maria Vallarino]{Dipartimento di Scienze Matematiche ``Giuseppe Luigi Lagrange'', Dipartimento di Eccellenza 2018-2022 \\
Politecnico di Torino\\ corso Duca degli Abruzzi 24\\ 10129 Torino\\ Italy}
\email{maria.vallarino@polito.it}

\begin{abstract}
We introduce a decreasing one-parameter family $\gXga{M}$, $\ga>0$, of Banach subspaces 
of the Hardy--Goldberg space $\ghu{M}$ on certain nondoubling Riemannian 
manifolds with bounded geometry and we investigate their properties. In particular, we prove that 
$\gXum{M}$ agrees with the space
of all functions in $\ghu{M}$ whose Riesz transform is in $\lu{M}$, and we obtain the surprising result that this space
does not admit an atomic decomposition. 
\end{abstract}

\maketitle

\section{Introduction} \label{s:Introduction}

In their seminal paper \cite{FS} C.~Fefferman and E.M.~Stein defined 
the classical Hardy space $\hu{\BR^n}$ as follows:
\begin{equation} \label{f: HuBRn}
\hu{\BR^n}
:= \{f \in \lu{\BR^n}: \bigmod{\nabla (-\Delta)^{-1/2} f} \in \lu{\BR^n}\};
\end{equation} 
here $\nabla$ and $\Delta$ denote the Euclidean gradient and Laplacian, respectively.  
Following up earlier work of D.L.~Burkholder, R.F.~Gundy and M.L.~Silverstein \cite{BGS}, 
Fefferman and Stein obtained several characterisations 
of $\hu{\BR^n}$ in terms of various
maximal operators and square functions, thereby starting the real variable theory of Hardy spaces.
Their analysis was complemented by R.R.~Coifman \cite{Coi}, who showed that  
$\hu{\BR}$ admits an atomic decomposition.  This result was
later extended to higher dimensions by R.~Latter \cite{La}.  

It is natural to speculate whether an analogue of the results of Fefferman--Stein, Coifman and Latter holds in different settings.
In other words, one may ask what is the most appropriate way to define Hardy spaces in settings other than $\BR^n$ and whether different definitions lead to the same spaces. In this paper we will consider this problem on a class of nondoubling Riemannian manifolds.

\medskip

There is a huge literature concerning this question on manifolds or on even more
abstract sorts of spaces and it is virtually impossible to give an account of the main results in the field. Thus, without any pretence of exhaustiveness, we mention just a few contributions, which we consider the most relevant to our discussion.
It is fair to say that most of the results in the literature are concerned with settings where the relevant metric and measure satisfy the doubling condition; while these works do not directly apply to the manifolds considered here, they nevertheless play a paradigmatic role in the development of the subject and it would be impossible to leave them out of our discussion.

In the context of spaces of homogeneous type, Coifman and G.~Weiss \cite{CW} defined an atomic Hardy space which generalises the Euclidean one.
Various maximal function characterisations of such Hardy space have been obtained by a number of authors under additional assumptions on the underlying metric (see, e.g., \cite{Umax,YZ1} and references therein); however, a characterisation in terms of singular integrals, similar to the Euclidean one via Riesz transforms, remains in general a deceptive problem.
A complete characterisation of the Coifman--Weiss Hardy space in terms of maximal functions and Riesz transforms was carried over 
by G.~Folland and Stein \cite{FS} and by M.~Christ and D.~Geller \cite{CG} in the case of stratified groups,
following a deep result of A.~Uchiyama \cite{U}.
Further results in this direction were obtained by J.~Dziuba\'nski and K.~Jotsaroop 
\cite{DJ} in $\BR^n$, but with the Grushin operator playing the role of the Euclidean Laplacian,
and by Dziuba\'nski and J.~Zienkiewicz (see \cite{DZ1,DZ2} and the references therein)
for Schr\"odinger operators with nonnegative potentials satisfying certain additional assumptions.

Within the class of Riemannian manifolds with doubling Riemannian measure, a consequence of works of various
authors \cite{AMR, HLMMY, DKKP} (see also the references therein) is that, under mild geometric
conditions, the Hardy spaces defined in terms of the heat maximal operator and the Poisson maximal operator agree, and 
coincide with an atomic Hardy space defined in terms of appropriate atoms (these are atoms naturally associated
to the Laplace--Beltrami operator and may differ
considerably from those defined by Coifman and Weiss). Furthermore, in this setting it is known that
the Riesz--Hardy space contains the atomic space, but, to the best of our knowledge,
the question whether this inclusion is proper is still open.  
 
\medskip
 
In this paper we consider Riemannian manifolds $M$ with positive injectivity radius, 
Ricci tensor bounded from below and spectral gap.  
Notice that $M$, equipped with the Riemannian distance, is \emph{not}
a space of homogeneous type in the sense of Coifman and Weiss, for 
the doubling condition fails for large balls.  The corresponding theory of atomic Hardy type 
spaces has been developed only quite recently \cite{Io,CMM1,T,MMV2}, and it differs remarkably from that of $\hu{\BR^n}$.  
A related nondoubling setting where interesting results concerning aspects of this programme 
have been developed has been considered in \cite{V,MOV}.

The final outcome of our research, which will be described in detail in the present paper and in other
forthcoming papers, is that different definitions of Hardy spaces (atomic, via Riesz transform, via maximal operators)
on certain manifolds with exponential volume growth may very well lead to different spaces.   
Related interesting partial results are \cite[Corollary~6.3]{A1} and \cite{Lo}.  

\medskip

Our work is inspired by a series of papers \cite{MMV1,MMV2}, by the Ph.D.\ thesis \cite{Vo} and by the recent work \cite{CM} on graphs.
Specifically, in \cite{MMV1} the Authors introduced a sequence $\Xh{M}$ of strictly decreasing Banach spaces,
which are isometric copies of the Hardy type space $\hu{M}$,
introduced by A.~Carbonaro, G.~Mauceri and Meda in \cite{CMM1}.  
This space differs from the classical Hardy space of Coifman--Weiss \cite{CW}.  

Volpi \cite{Vo} modified this construction by letting the Hardy--Goldberg type
space $\ghu{M}$, introduced by M.~Taylor in \cite{T} and further generalised by Meda and Volpi \cite{MVo},
play the role of the space $\hu{M}$ of Carbonaro, Mauceri and Meda.   
The resulting sequence of spaces is named $\gXh{M}$, instead of $\Xh{M}$.  Of course
$\gXh{M} \supseteq \Xh{M}$, for $\ghu{M}$ properly contains $\hu{M}$.  
It may be worth recalling that $\ghu{M}$ is the analogue on $M$ of the classical space 
$\ghu{\BR^n}$ introduced by D.~Goldberg in \cite{Go}
and further investigated on specific measure metric spaces in various papers, including \cite{HMY,YZ1,YZ2,BDL}
(see also the references therein).

In this paper we take a step further, and consider 
a one-parameter family of spaces $\gXga{M}$, where $\ga$ is a positive real number, which agree with those introduced
in \cite{Vo} when $\ga$ is a positive integer.   Specifically, the space $\gXga{M}$
is just $\cU^\ga \big[ \ghu{M} \big]$, where $\cU = \cL(\cI+ \cL)^{-1}$ and $\cL$ is the (positive) Laplace--Beltrami
operator on $M$.
It is not hard to see that $\cU$ is injective on $\lu{M}$, hence so is $\cU^\ga$.   
The space $\gXga{M}$ is endowed with the norm that makes $\cU^\ga$ an isometry between $\ghu{M}$
and $\gXga{M}$, i.e.,  $\bignorm{f}{\gXga{M}} := \bignorm{\cU^{-\ga} f}{\ghu{M}}$.

The idea of considering noninteger values of $\ga$ is taken
from \cite{CM}, where an analogue of $\gXga{M}$ is defined on certain graphs
with exponential volume growth.  However,
the case of Riemannian manifolds we consider here
requires substantial refinements of the theory developed in \cite{CM}.

The spaces $\gXga{M}$ play a central role in our analysis
of Hardy type spaces.  We prove that $\gXga{M}$ is a decreasing family of Banach spaces, each of which interpolates with $L^2(M)$. We also show that the imaginary powers of the Laplace--Beltrami operator $\cL$ are bounded from $\gXga{M}$ to $\ghu{M}$ for all $\ga > 0$, thus providing an endpoint counterpart to their $L^p$ boundedness for $p \in (1,\infty)$; notice that the imaginary powers of $\cL$ may not be bounded from $\ghu{M}$ to $L^1(M)$ \cite{MMV4}. 

We also prove that $\gXga{M}$ does not admit an atomic decomposition when $\ga$ is not an integer, at least in the
case of symmetric spaces of the noncompact type and real rank one. More precisely, we show that the space of compactly supported elements of $\gXga{M}$ is not dense in $\gXga{M}$. 

The extension to noninteger values of the parameter $\ga$ is \emph{a posteriori} motivated by one of our main results, which
states that 
\begin{equation} \label{f: equiv}
\gXum{M}
= \{f \in \ghu{M}: \mod{\cR f} \in \lu{M}\}.  
\end{equation}
Here $\cR$ denotes the \emph{Riesz transform} $\nabla \cL^{-1/2}$
on $M$, and $\nabla$ is the Riemannian gradient.   
Notice that we do not prove here that the Riesz--Hardy space $\huR{M}$, defined by 
\begin{equation}\label{f: rieszhardy}
\huR{M}
:= \{f \in \lu{M}: \mod{\cR f} \in \lu{M}\},
\end{equation}
agrees with $\gXum{M}$.  The proof of this equivalence requires \eqref{f: equiv} together
with more sophisticated real variable methods, and will be given in \cite{MVe}. In conjunction with the results of the present paper, this equivalence implies the perhaps surprising result that $\huR{M}$ does not admit in general an atomic decomposition.

The relations between the spaces $\gXga{M}$ for different values of $\gamma>0$ and the Hardy spaces $\huH{M}$ and $\huP{M}$ defined in terms of the 
heat and the Poisson maximal operators will be discussed in detail in \cite{MMV}, yielding another possibly surprising result: the spaces $\huR{M}$, $\huH{M}$ and $\huP{M}$ may all differ in this context.

\bigskip

We shall use the ``variable constant convention'', and denote by $C$,
possibly with sub- or superscripts, a constant that may vary from place to 
place and may depend on any factor quantified (implicitly or explicitly) 
before its occurrence, but not on factors quantified afterwards.

\section{Background on Hardy type spaces}
\label{s: Background material}

Let $M$ denote a connected, complete $n$-dimensional Riemannian manifold
of infinite volume with Riemannian measure $\mu$.  
Denote 
 by $\cL$ the positive Laplace--Beltrami operator 
on $M$, by $b$ the bottom of the $\ld{M}$ spectrum of $\cL$,
and set $\be = \limsup_{r\to\infty} \bigl[\log\mu\bigl(B_r(o)\bigr)\bigr]/(2r)$, 
where $o$ is any reference point of $M$ and $B_r(o)$ denotes the ball centred at $o$ of radius $r$. 
By a result of Brooks, $b\leq \be^2$ \cite{Br}.

We denote by $\cB$ the family of all geodesic balls on $M$.
For each $B$ in $\cB$ we denote by $c_B$ and $r_B$
the centre and the radius of $B$ respectively.  
Furthermore, for each positive number $\lambda$, we denote by $\lambda \, B$ the
ball with centre $c_B$ and radius $\lambda \, r_B$.
For each \emph{scale parameter} $s$ in $\BR^+$, 
we denote by $\cB_s$ the family of all
balls $B$ in $\cB$ such that $r_B \leq s$.  

In this paper we make the following \textbf{assumptions on the geometry of the manifold}:
\begin{enumerate}
\item[\itemno1]
the injectivity radius of $M$ is positive;
\item[\itemno2]
the Ricci tensor is bounded from below; 
\item[\itemno3]
$M$ has spectral gap, to wit $b>0$. 
\end{enumerate}
We emphasize the fact that some of the results in this paper hold under less stringent
assumptions.  For instance, Theorem~\ref{t: bound semigroup O} below requires that $M$ satisfies 
the local doubling condition and supports a local scaled $L^2$-Poincar\'e inequality, which are implied 
by \rmii\ above, but they do not require \rmi\ and \rmiii.  We believe that it is not
worth keeping track of the minimal assumptions under which each of the results below holds, and
assume throughout that $M$ satisfies \rmi--\rmiii\ above.  

It is well known that for manifolds satisfying \rmi--\rmiii\ above the following properties hold:
\begin{enumerate}
\item[(a)]
there are positive constants 
$\al$ and $C$ such that
\begin{equation} \label{f: volume growth} 
\mu(B)
\leq C \, r_B^{\al} \, \e^{2\be \, r_B}
\quant B \in\cB\setminus \cB_1,
\end{equation}
where $\beta$ is the constant defined at the beginning of this section;
\item[(b)]
(see \cite[Remark~2.3]{MMV2}) there exists a positive 
constant $C$ such that
\begin{equation} \label{f: lower bound balls}
C^{-1}\,r_B^n
\leq \mu(B)
\leq C\,r_B^n
\quant B\in\cB_1;
\end{equation}
\item[({c})] as a consequence of (a) and (b) 
the measure $\mu$ is \emph{locally doubling}, 
i.e., for every $s>0$ there exists a constant~$D_s$ such that 
$$
\mu(2B)\le D_s\ \mu(B) \qquad
\forall B\in \cB_s;
$$
\item[(d)]
$M$ possesses a \emph{local scaled $L^2$-Poincar\'e inequality}, i.e.,  
for each $R<\infty$ there exists a constant $C$, depending on $R$, such that 
\begin{equation} \label{f: poincare}
\int_B \bigmod{f-f_B}^2 \wrt \mu
\leq C \, r_B^2 \, \int_B \bigmod{\nabla f}^2 \wrt \mu\,,
\end{equation}
for all balls $B$ in $\cB_R$;
\item[(e)]
the heat semigroup $\{\cH_t\}$ is \emph{ultracontractive}, in the sense 
that $\cH_t := \e^{-t\cL}$ maps $\lu{M}$ into $\ld{M}$ and satisfies the following estimate \cite[Section 7.5]{Gr1}: 
$$
\bigopnorm{\cH_t}{1;2}\leq C \e^{-b t} \,t^{-n/4}\,(1+t)^{\frac{n}{4}}\qquad \forall t\in \BR^+;
$$
it follows by interpolation that, for every $p\in (1,2]$,
\begin{equation}\label{f: heat1p}
\bigopnorm{\cH_t}{1;p}\leq C [\e^{-b t} \,t^{-n/4}\,(1+t)^{\frac{n}{4}}]^{2/{p'}}\qquad \forall t\in \BR^+,
\end{equation}
and in particular
\begin{equation} \label{f: ultracontractivity}
\lim_{t\to\infty} \, \bignorm{\cH_t f}{L^p} = 0
\quant f \in \lu{M};
\end{equation} 
\item[(f)] the Cheeger isoperimetric constant of $M$ is positive \cite[Theorem 9.5]{CMM1}, and consequently the following Sobolev type inequality holds:
\begin{equation} \label{f: FF}
\bignorm{f}{L^1} \leq C \bignorm{ |\nabla f| }{L^1}
\end{equation}
for all $f \in C^\infty_c(M)$ \cite[Theorem V.2.1]{Ch}.
\end{enumerate}

Next, we introduce the local Hardy space $\ghu{M}$.  

\begin{definition} \label{d: atom}
Suppose that $p$ is in $(1,\infty]$ and let $p'$ be the index conjugate to $p$. 
A \emph{standard $p$-atom} 
is a function $a$ in $L^1(M)$ supported in a ball $B$ in $\cB_1$ 
satisfying the following conditions:
\begin{enumerate}
\item[\itemno1] \emph{size condition}: $\norm{a}{L^p}  \leq \mu (B)^{-1/p'}$;
\item[\itemno2] \emph{cancellation condition}:  
$\ds \int_B a \wrt \mu  = 0$. 
\end{enumerate}
A \emph{global $p$-atom} is a function $a$
in $L^1(M)$ supported in a ball $B$ of radius \emph{exactly equal to} $1$ 
satisfying the size condition above (but possibly not the cancellation condition).
Standard and global $p$-atoms will be referred to simply as $p$-\emph{atoms}.
\end{definition}

\begin{definition} \label{d: Goldberg}
The \emph{local atomic Hardy space} $\frh^{1,p}({M})$ is the 
space of all functions~$f$ in $L^1(M)$
that admit a decomposition of the form
\begin{equation} \label{f: decomposition}
f = \sum_{j=1}^\infty \lambda_j \, a_j,
\end{equation}
where the $a_j$'s are $p$-atoms 
and $\sum_{j=1}^\infty \mod{\lambda_j} < \infty$.
The norm $\norm{f}{\frh^{1,p}}$
of $f$ is the infimum of $\sum_{j=1}^\infty \mod{\lambda_j}$
over all decompositions (\ref{f: decomposition}) of $f$.
\end{definition}

This space was introduced in even greater generality by Volpi \cite{Vo}, who extended previous 
work of Goldberg \cite{Go} and Taylor \cite{T}, and then further generalised in \cite{MVo}.
Goldberg treated the Euclidean case, while Taylor worked on Riemannian manifolds with strongly bounded geometry and considered only $\infty$-atoms (more precisely, ions); Volpi worked in a much more abstract setting, which covers the case where $M$
is a Riemannian manifold with Ricci curvature bounded from below (see also \cite{MVo}
for more on this).  In particular, Volpi proved that 
$\frh^{1,p}(M)$ is independent of $p$; henceforth, the space
$\frh^{1,2}(M)$ will be denoted simply by $\ghu{M}$, and $2$-atoms in $\ghu{M}$ will also be called $\ghu{M}$-atoms.

The choice of $1$ as a ``scale parameter'' for the radii of balls in Definition \ref{d: atom} is completely arbitrary, and replacing it with any other positive number would lead to the definition of the the same space $\ghu{M}$, with equivalent norms.
Indeed, a slight modification of \cite[Lemma 2]{MVo} 
shows that, for all $p\in (1,\infty]$, there exists a constant $C$ 
such that, for every function $f$ in $L^p(M)$ supported in a ball 
$B\in \cB\setminus \cB_1$, the function $f$ is in $\mathfrak h^1(M)$ and 
\begin{equation}\label{f: normah1funzioneL2}
\|f\|_{\frh^1}\leq C\mu(B)^{1/p'}\,\|f\|_{L^p}\,.
\end{equation}

An important feature of $\ghu{M}$ lies in its interpolation properties with $L^p$ spaces. In particular, for every $\theta$ in $(0,1)$, the complex interpolation space 
$\big(\ghu{M},\ld{M}\big)_{[\theta]}$ is $L^{2/(2-\theta)}(M)$ (see \cite[Theorem~5]{MVo}).  

We shall repeatedly use the following proposition, whose proof is a slight modification of \cite[Theorem 6]{MVo}. 

\begin{proposition}\label{p: uniflim}
If $T$ is an $\ghu{M}$-valued linear operator defined on $\ghu{M}$-atoms such that  
$$
\sup\{\|Ta\|_{\frh^1}: a \text{ $\ghu{M}$-atom}\}<\infty\,,
$$
then $T$ admits a unique bounded extension from $\mathfrak{h}^1(M)$ to $\mathfrak{h}^1(M)$. 
\end{proposition}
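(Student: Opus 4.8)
The plan is to build the extension in two stages: first define $T$ on the linear span of atoms, namely the space $\hufin{M}$ of finite linear combinations of $\ghu{M}$-atoms, and then pass to all of $\ghu{M}$ by density. On $\hufin{M}$ there is nothing to choose, since $T$ is assumed linear on atoms: for $f=\sum_{j=1}^N\lambda_j a_j$ one has $Tf=\sum_{j=1}^N\lambda_j Ta_j$, and writing $A:=\sup\{\bignorm{Ta}{\frh^1}:a\text{ an }\ghu{M}\text{-atom}\}<\infty$, the triangle inequality gives
\[
\bignorm{Tf}{\frh^1}\leq\sum_{j=1}^N\mod{\lambda_j}\,\bignorm{Ta_j}{\frh^1}\leq A\sum_{j=1}^N\mod{\lambda_j}.
\]
Minimising over all \emph{finite} atomic representations of $f$ yields $\bignorm{Tf}{\frh^1}\leq A\,\bignorm{f}{\hufin{M}}$, where $\bignorm{\cdot}{\hufin{M}}$ denotes the finite atomic norm.

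\medskip

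The delicate point is that this only controls $T$ by the finite atomic norm, which \emph{a priori} dominates the genuine $\ghu{M}$-norm (an infimum over \emph{countable} decompositions); bridging this gap is exactly where uniform boundedness on atoms may fail to yield $\ghu{M}$-boundedness, as the counterexamples of Meyer--Taibleson--Weiss and Bownik show. The crucial structural feature we exploit is that $\ghu{M}$-atoms are \emph{$2$-atoms}, i.e.\ the defining integrability exponent is $2<\infty$. Following the argument of \cite[Theorem~6]{MVo}, one proves that for this finite exponent the two norms are equivalent on $\hufin{M}$, namely that there is a constant $C$ with
\[
\bignorm{f}{\hufin{M}}\leq C\,\bignorm{f}{\frh^1}\qquad\forall f\in\hufin{M}.
\]
This is the main obstacle. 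Its proof is a Calder\'on--Zygmund type construction that, starting from a near-optimal countable decomposition of $f\in\hufin{M}$, manufactures a finite one of comparable norm; the finiteness of the atom exponent enters in an essential way, since the remainder left after truncation is a compactly supported $L^2$ function of small norm and hence a finite combination of controlled atoms by estimates of the type \eqref{f: normah1funzioneL2}, a step that has no $\infty$-counterpart. This is precisely the reason the analogous statement for $\infty$-atoms is false.

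\medskip

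Granted this equivalence, we obtain $\bignorm{Tf}{\frh^1}\leq AC\,\bignorm{f}{\frh^1}$ for every $f\in\hufin{M}$. Since $\hufin{M}$ is dense in $\ghu{M}$ by the very definition of the atomic norm, and $\ghu{M}$ is complete, the standard extension theorem for densely defined bounded operators furnishes a unique continuous extension of $T$ to a bounded operator from $\ghu{M}$ to $\ghu{M}$, of operator norm at most $AC$. Density of $\hufin{M}$ together with continuity also guarantees uniqueness of the extension, completing the proof.
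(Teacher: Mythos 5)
Your proposal is correct and takes essentially the same route as the paper, whose entire ``proof'' of this proposition is the remark that it is a slight modification of \cite[Theorem~6]{MVo}: you reconstruct exactly that argument's skeleton --- boundedness with respect to the finite atomic norm on the span of atoms, equivalence of that norm with the $\frh^1$-norm on $\hufin{M}$ (the crux, which you correctly attribute to the finiteness of the atom exponent and defer to the same source), and then extension by density. The only implicit reading you make, namely that ``linear operator defined on atoms'' means linear on their span so that $T$ is well defined on $\hufin{M}$, is the intended one.
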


The definition of the space $\ghu{M}$ is similar to that of the atomic Hardy space $\hu{M}$, introduced by 
Carbonaro, Mauceri and Meda \cite{CMM1,CMM2}, the only difference
being that atoms in $\hu{M}$ are just standard atoms in $\ghu{M}$, and there are no global atoms.  
As a consequence, functions in $\hu{M}$ have vanishing integral, a property not enjoyed by all the 
functions in $\ghu{M}$.
Thus, trivially, $\hu{M}$ is properly and continuously contained in $\ghu{M}$.  

We now introduce the space $\gbmo{M}$.  
Suppose that $q$ is in $[1,\infty)$.  
For each locally integrable function $g$ define the 
\emph{local sharp maximal function} $g^{\sharp,q}$ by
$$
g^{\sharp,q}(x)
= \sup_{B \in \cB_{1}(x)} \Bigl(\frac{1}{\mu(B)}
\int_B \mod{g-g_B}^q \wrt\mu \Bigr)^{1/q}
\quant x \in M,
$$
where $g_B$ denotes the average of $f$ over $B$ and $\cB_{1}(x)$ denotes the family
of all balls in $\cB_{1}$ centred at the point $x$. 
Define the \emph{modified local sharp maximal function} $N^q(g)$ by
$$
N^q(g)(x)
:= g^{\sharp,q}(x) +  \Bigl[\frac{1}{\mu(B_1(x))} 
     \int_{B_1(x)} \mod{g}^q \wrt\mu \Bigr]^{1/q}
\quant x \in M\,.
$$
Denote by $\mathfrak{bmo}^q(M)$ the space of all locally integrable 
functions~$g$ such that $N^q(g)$ is in $\ly{M}$, endowed with the norm 
$$
\norm{g}{\mathfrak{bmo}^q}
= \norm{N^q(g)}{L^{\infty}}.
$$

In \cite{MVo} it is proved that 
the space $\mathfrak{bmo}^q(M)$ does not depend on the parameter $q$,
as long as $q$ is in $[1,\infty)$.  Henceforth, we shall denote this space
by $\gbmo{M}$, endowed with the norm $\mathfrak{bmo}^2$. 
Moreover, the space $\gbmo{M}$ may be identified with the dual of $\ghu{M}$ (see \cite[Theorem 2]{MVo}). 
More precisely, for every function $g\in \gbmo{M}$, the linear functional $F_g$, defined on every $\ghu{M}$-atom $a$ by
\begin{equation}\label{f: dualita} 
F_g(a)=\int_M a\,g \wrt \mu,
\end{equation}
extends to a bounded linear functional on $\ghu{M}$. 
Conversely, for every functional $F\in(\ghu{M})'$ 
there exists a function $g\in \gbmo{M}$ such that $F=F_g$. Moreover, there exists a positive constant $C$ such that 
\begin{equation}\label{f: dualita_norme}
C^{-1} \|g\|_{\mathfrak{bmo}}\leq  \|F\|_{(\mathfrak{h}^1)'}\leq C\,\|g\|_{\mathfrak{bmo}}\,.
\end{equation}

\section{The heat semigroup and the operator \texorpdfstring{$\cU$}{U} on \texorpdfstring{$\ghu{M}$}{h1(M)}}
\label{s: The heat semigroup on ghuM}

The theory of Hardy type spaces that we shall describe in 
Section~\ref{s: A one parameter family of Hardy-type spaces} requires 
the boundedness on $\ghu{M}$ of various functions of the Laplace--Beltrami operator $\cL$,
including the heat semigroup and the operator $\cU$ defined below.  These will be established in 
Subsections~\ref{subs: The heat semigroup on ghuM} and~\ref{subs: The operator cU on ghuM}, respectively.  

\subsection{The heat semigroup on \texorpdfstring{$\ghu{M}$}{h1(M)}} \label{subs: The heat semigroup on ghuM} 

It is well known that $\big\{\cH_t\big\}$ is a Markovian semigroup.  In particular,
it is contractive on $\lu{M}$, hence from $\ghu{M}$ to $\lu{M}$, for $\ghu{M}$
is continuously imbedded in $\lu{M}$ with norm $\leq 1$.  
In this section we discuss the boundedness of the heat semigroup $\{\cH_t\}$
on the local Hardy space $\ghu{M}$.  

A well known result obtained independently by Grigor'yan and 
Saloff-Coste~\cite[Theorem~5.5.1]{SC}
says that the conjunction of the local doubling condition and the local Poincar\'e inequality is 
equivalent to a local Harnack inequality for positive solutions to the
heat equation.  In particular, for all $R>0$ there exists a constant $C$ such that for all
balls $B=B(c_B,r_B)$, with $r_B<R$, and for any smooth positive solution $u$ of 
$(\partial_t+\cL)u=0$ in the cylinder $Q := \big(s-r_B^2,s\big) \times B$, the
following inequality holds:
\begin{equation} \label{f: harnack}
\sup_{Q_-} u
\leq C \, \inf_{Q_+} u, 
\end{equation}
where 
$\ds Q_- := \Big(s-\frac{3r_B^2}{4},s-\frac{r_B^2}{2}\big) \times B(c_B,r_B/2)$
and
$\ds Q_+ := \Big(s-\frac{r_B^2}{4},s\Big) \times B(c_B,r_B/2)$.  

For $\om$ in $[0,\pi]$, we denote by $S_\om$ the half line $(0,\infty)$ if $\om =0$,
and the sector $\big\{z \in \BC: z \neq 0, \, \hbox{and $\bigmod{\arg z} < \om$} \big\}$
if $\om>0$.  Recall that, given a number $\om$ in $[0,\pi)$,   
an operator~$A$ on a Banach space $\cY$ is \emph{sectorial} of angle $\om$ if 
\begin{enumerate}
\item[\itemno1]
the spectrum of $A$ is contained in the closed sector $\OV{S}_\om$;
\item[\itemno2]
the following \emph{resolvent estimate} holds:
$$
\sup_{\la \in \BC\setminus \OV{S}_{\om'}} \bigopnorm{\la \, (\la- A)^{-1}}{\cY} 
< \infty
\quant \om' \in (\om,\pi).  
$$
\end{enumerate}
Observe that condition \rmii\ above may be reformulated as follows:
\begin{equation} \label{f: equiv ref sectoriality}
\sup_{\la \in {S}_{\om'}} \bigopnorm{\la \, (\la+ A)^{-1}}{\cY} 
< \infty
\quant \om' \in [0,\pi-\om).  
\end{equation}
The theory of Hardy type spaces that we shall develop in 
Section~\ref{s: A one parameter family of Hardy-type spaces} hinges on the uniform boundedness of 
the heat semigroup on $\ghu{M}$.  This fact, together with some related estimates, 
will be proved in the next theorem.  A result similar to Theorem~\ref{t: bound semigroup O}~\rmi\ below,
but in a different setting, may be found in \cite{DW}.  

\begin{theorem} \label{t: bound semigroup O}
The following hold:
\begin{enumerate}
\item[\itemno1]
$\{\cH_t\}$ is a uniformly bounded $C_0$ semigroup on $\ghu{M}$;
\item[\itemno2]
$\cL$ is a sectorial operator of angle $\pi/2$ on $\ghu{M}$;
\item[\itemno3]
$\sup_{\la>0} \, \, \bigopnorm{\la \, (\la + \cL)^{-1}}{\mathfrak h^1} < \infty$ and
$\sup_{\la>0} \bigopnorm{\cL \, (\la + \cL)^{-1}}{\mathfrak h^1} < \infty$.
\end{enumerate}
\end{theorem}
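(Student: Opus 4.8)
The plan is to reduce \rmii\ and \rmiii\ to \rmi\ by abstract semigroup theory and to concentrate all the work on \rmi. For each fixed $t$, Proposition~\ref{p: uniflim} bounds the operator norm of $\cH_t$ on $\ghu{M}$ by a constant multiple of $\sup_a\|\cH_t a\|_{\frh^1}$, the supremum being over all $\ghu{M}$-atoms $a$; hence the uniform boundedness asserted in \rmi\ is equivalent to the single estimate
\[
\sup_{t>0}\,\sup_{a}\,\|\cH_t a\|_{\frh^1}<\infty .
\]
I would establish this by splitting the time range into a \emph{local regime} $0<t\le1$ and a \emph{global regime} $t\ge1$, treating the two by genuinely different means and checking that the bounds match at $t=1$.

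In the local regime I would exploit the fact that for $t\le1$ the manifold behaves like a space of homogeneous type: the lower Ricci bound \rmii\ yields Gaussian upper bounds for the heat kernel $\kerHt$ and for its gradient, while the local doubling property and the local Poincar\'e inequality~\eqref{f: poincare} (equivalently the local Harnack inequality~\eqref{f: harnack}) provide the regularity needed for an atomic reconstruction. Given an atom $a$ supported in $B\in\cB_1$, I would decompose $\cH_t a$ over the dyadic annuli around $B$: for a \emph{standard} atom the cancellation condition lets me replace $\kerHt(x,y)$ by $\kerHt(x,y)-\kerHt(x,c_B)$ and gain a factor $r_B/d(x,B)$ from the gradient bound, turning each annular piece into a fixed multiple of an atom with geometrically summable coefficients; for a \emph{global} atom the Gaussian tails keep $\cH_t a$ essentially localised near $B$, and the pieces are controlled directly through~\eqref{f: normah1funzioneL2}. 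This yields $\sup_{0<t\le1}\sup_a\|\cH_t a\|_{\frh^1}<\infty$.

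The global regime is where I expect the main obstacle to lie. Here I would cover $M$ by a uniformly locally finite family of balls $B_i\in\cB_1$, choose a subordinate partition of unity $\{\phi_i\}$, and write $\cH_t a=\sum_i\phi_i\,\cH_t a$; since every $B_i$ has radius $1$, \eqref{f: lower bound balls} makes all the $\mu(B_i)$ comparable and \eqref{f: normah1funzioneL2} gives $\|\phi_i\cH_t a\|_{\frh^1}\le C\,\mu(B_i)^{1/2}\|\cH_t a\|_{\ld{B_i}}$. The task then becomes to bound $\sum_i\mu(B_i)^{1/2}\|\cH_t a\|_{\ld{B_i}}$ uniformly in $t\ge1$. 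This is the delicate point: the number of balls $B_i$ at distance $\sim k$ from $B$ grows like $\e^{2\be k}$ by the volume estimate~\eqref{f: volume growth}, so the summation is in direct competition with the exponential volume growth, and a crude use of either the ultracontractive global factor coming from~\eqref{f: heat1p} or of an $L^2$ off-diagonal (Davies--Gaffney) bound is by itself insufficient, reflecting the fact that only $b\le\be^2$ is available. I would therefore combine the spectral-gap decay provided by assumption~\rmiii\ with sharp large-time heat-kernel and gradient estimates, organised by annuli, so that the exponential gain $\e^{-bt}$ is matched against the volume growth and the resulting series converges with a bound independent of $t$.

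Finally, the strong continuity in \rmi\ would follow from the uniform boundedness just obtained, together with the convergence $\|\cH_t a-a\|_{\frh^1}\to0$ as $t\to0^+$ on atoms (a by-product of the local analysis) and a density argument. Part \rmii\ is then automatic: the negative generator of a bounded $C_0$ semigroup is sectorial of angle $\pi/2$, since for $\Re\la>0$ one has $(\la+\cL)^{-1}=\int_0^\infty\e^{-\la t}\,\cH_t\wrt t$ and hence $\|\la\,(\la+\cL)^{-1}\|\le C\,|\la|/\Re\la$, which is exactly the resolvent bound~\eqref{f: equiv ref sectoriality} on the sectors $S_{\om'}$ with $\om'<\pi/2$, while $\sigma(\cL)\subset\OV S_{\pi/2}$. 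Part \rmiii\ is the restriction of this bound to $\la>0$, together with the identity $\cL\,(\la+\cL)^{-1}=\cI-\la\,(\la+\cL)^{-1}$, whose $\ghu{M}$-norm is therefore at most $1+\sup_{\la>0}\|\la\,(\la+\cL)^{-1}\|_{\frh^1}<\infty$.
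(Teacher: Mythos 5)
Your reduction of \rmii\ and \rmiii\ to \rmi\ is correct and matches the paper, and your local-regime sketch for $t\le 1$ is plausible. The genuine gap is exactly where you place it: the global regime $t\ge 1$. What you propose there --- an annular summation in which the spectral-gap decay $\e^{-bt}$ is ``matched against'' the volume growth $\e^{2\be k}$ using sharp large-time heat-kernel and gradient estimates --- is not a proof, and under the paper's hypotheses it cannot be completed. To beat the count $\e^{2\be k}$ of unit balls in the $k$-th annulus one would need off-diagonal bounds of the form $\e^{-bt-\be\, d(x,y)-d(x,y)^2/(4t)}$, with the extra linear decay factor $\e^{-\be\, d(x,y)}$ that holds, e.g., on symmetric spaces; but assumptions \rmi--\rmiii\ of Section~\ref{s: Background material} yield only local Gaussian bounds together with the global $L^2$ decay $\e^{-bt}$, and with those the annular sum behaves like $\sum_k \e^{2\be k-k^2/(4t)}\,\e^{-bt}\approx \e^{(4\be^2-b)t}$, which diverges as $t\to\infty$ because $b\le\be^2$. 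So the series you need to be bounded uniformly in $t$ simply is not, and no refinement of this strategy within the available estimates will fix it.

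The missing idea, which is how the paper proves \rmi\ (for all $t>0$ at once, with no splitting of time scales), is to avoid any competition with volume growth by combining the local Harnack inequality with the Markov property. By duality with $\gbmo{M}$ it suffices to bound $\mod{\prodo{\cH_t a}{g}}$; after discretising $M$ by unit balls $B_z$ (your covering), the paper applies Harnack's inequality \eqref{f: harnack} twice to convert the local norm $\bignorm{\cH_t a}{\ld{B_z}}$ into $C\mu(B_z)^{1/2}\, h_{t+r_B^2+1}(z,c_B)$, a \emph{pointwise} value of the heat kernel at a slightly later time; the resulting sum $\sum_z \mu(B_z)\, h_{t+r_B^2+1}(z,c_B)$ is then dominated, again via Harnack, by $C\int_M h_{t+r_B^2+2}(x,c_B)\wrt\mu(x)=C$, by conservation of heat-kernel mass. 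This $L^1$ normalisation is what replaces every decay estimate: the bound is uniform in $t$ and the volume growth never enters. Note also that the strong continuity is not ``a by-product of the local analysis'' as you assert: measuring $\cH_t a-a$ in $\frh^1$ via \eqref{f: normah1funzioneL2} requires genuinely compact supports, which Gaussian tails do not provide; the paper obtains them through the wave decomposition $\cH_t a=\e^{-bt}\ir h_t^\BR(s)\cos(s\cD)a\wrt s$ with $\cD=\sqrt{\cL-b}$ and finite propagation speed, which is a separate argument you would still need.
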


\begin{proof}
First we prove \rmi. 
We shall preliminarily show that 
\begin{equation} \label{f: est heat semigroup}
\sup_{t>0} \, \sup \, \bigmod{\prodo{\cH_t a}{g}}
< \infty,
\end{equation}
where the inner supremum is taken over all $\ghu{M}$-atoms $a$ and all $\gbmo{M}$-functions~$g$
with $\bignorm{g}{\mathfrak{bmo}} \leq 1$. In light of \eqref{f: dualita_norme} and Proposition \ref{p: uniflim}, estimate \eqref{f: est heat semigroup} implies
the uniform boundedness of $\{\cH_t\}$ on $\ghu{M}$.

To prove \eqref{f: est heat semigroup}, let $a$ be an $\ghu{M}$-atom supported in a ball $B=B(c_B,r_B)$, with $r_B\leq 1$. 
Denote by $\fM$ a $1$-discretisation of $M$
and, for each~$z$ in $\fM$, denote by $B_z$ the ball with centre $z$ and radius $1$.  
It is a well known fact (see, for instance, \cite{MVo}) that the cover $\{B_z: z \in \fM\}$ has the
finite overlapping property.  Denote by $\{\psi_z: z \in \fM\}$ a partition of
unity subordinate to that cover.  Clearly, at least formally, 
$$
\begin{aligned}
\bigmod{\prodo{\cH_t a}{g}}
\leq \sum_{z \in \fM} \, \int_{B_z} \psi_z(x) \, \bigmod{\cH_t a(x)} \, \bigmod{g(x)}\wrt \mu(x) 
\leq \sum_{z \in \fM} \, \bignorm{\cH_t a}{\ld{B_z}} \, \bignorm{g}{\ld{B_z}}.  
\end{aligned}
$$
We have used the Cauchy--Schwarz inequality in the last inequality above.  
Notice that 
$$
\begin{aligned}
\bignorm{g}{\ld{B_z}}
=     \mu(B_z)^{1/2} \, \Big[ \frac{1}{\mu(B_z)} \int_{B_z} \mod{g}^2 \wrt \mu\Big]^{1/2} 
\leq  \mu(B_z)^{1/2} \bignorm{g}{\mathfrak{bmo}}.  
\end{aligned}
$$
Furthermore, if $h_t$ denotes the heat kernel, then
$$
\begin{aligned}
\bignorm{\cH_t a}{\ld{B_z}}
&  =    \Big[ \int_{B_z} \wrt \mu(x) \, \Bigmod{\int_B h_t(x,y) \, a(y) \wrt \mu(y)}^2 
          \Big]^{1/2} \\
&  \leq \Big[ \int_{B_z} \wrt \mu(x) \, \int_B h_t(x,y)^2 \, \bignormto{a}{\ld{B}}{2}  \wrt \mu(y)
          \Big]^{1/2} \\
&  \leq \Big[ \int_{B_z} \wrt \mu(x) \, \frac{1}{\mu(B)} \, \int_B h_t(x,y)^2 \wrt \mu(y)
          \Big]^{1/2};
\end{aligned}
$$
we have used Schwarz's inequality in the first inequality and the size condition on $a$ in the second.  
Clearly for each $x$ in $M$
$$
\begin{aligned}
\frac{1}{\mu(B)} \, \int_B h_t(x,y)^2 \wrt \mu(y)
\leq \sup_{y\in B} \, h_t(x,y)^2   
\leq C\, \inf_{y\in B} \, h_{t+r_B^2}(x,y)^2   
\leq C\, h_{t+r_B^2}(x,c_B)^2;  
\end{aligned}
$$
we have used Harnack's inequality in the second inequality above.  
Thus,
$$
\begin{aligned}
\bignorm{\cH_t a}{\ld{B_z}}
\leq C\, \Big[\int_{B_z}  \, h_{t+r_B^2}(x,c_B)^2 \wrt \mu(x) \Big]^{1/2},
\end{aligned}
$$
which, by Harnack's inequality, is dominated by $C\, \mu(B_z)^{1/2}  \, h_{t+r_B^2+1}(z,c_B)$.
By combining the preceding estimates with the finite overlapping property of the family of 
balls $\{B_z: z \in \fM\}$, we see that 
$$
\begin{aligned}
\bigmod{\prodo{\cH_t a}{g}}
& \leq C\, \bignorm{g}{\mathfrak{bmo}}\, \sum_{z \in \fM} \, \mu(B_z) \, h_{t+r_B^2+1}(z,c_B) \\
& \leq C\, \bignorm{g}{\mathfrak{bmo}   }\, \sum_{z \in \fM} \,
                \int_{B_z} h_{t+r_B^2+2}(x,c_B) \wrt \mu(x) \\ 
& \leq C\, \bignorm{g}{ \mathfrak{bmo}}\, \int_{M} h_{t+r_B^2+2}(x,c_B) \wrt \mu(x) \\ 
& =    C\, \bignorm{g}{\mathfrak{bmo} },
\end{aligned}
$$
as required.  The equality above follows from the Markovianity of the heat semigroup.  

In order to conclude the proof of \rmi, it remains to show that $\{\cH_t\}$ is
strongly continuous on $\ghu{M}$.  Notice that it suffices to prove that $\bignorm{\cH_ta-a}{\mathfrak h^1} \to 0$
as $t\to 0^+$ for every $\ghu{M}$-atom $a$.  Indeed, suppose that this holds, and assume that $\ds f= \sum_j c_j \, a_j$.  Then
$$
\cH_t \Big(\sum_{j=1}^\infty\, c_j \, a_j\Big) - \sum_{j=1}^\infty\, c_j \, a_j 
= \sum_{j=1}^\infty\, c_j \big(\cH_t a_j - a_j\big), 
$$
because we already know that $\{\cH_t\}$ is bounded on $\ghu{M}$.
Hence
$$
\bignorm{\cH_tf-f}{\mathfrak h^1} 
\leq \sum_{j=1}^\infty \, \bigmod{c_j} \, \bignorm{\cH_ta_j-a_j}{\mathfrak h^1}
\to 0
$$
as $t\to 0^+$ by the Lebesgue dominated convergence theorem (we have already proved that
$\{\cH_t\}$ is uniformly bounded, so that 
$\bignorm{\cH_ta_j-a_j}{\mathfrak h^1} \leq 1+\sup_{t>0} \, \bigopnorm{\cH_t}{\mathfrak h^1} $).  

Now we show that, if $a$ is an $\ghu{M}$-atom, then $\bignorm{\cH_ta-a}{\mathfrak h^1} \to 0$.  
Without loss of generality, we may assume that the support of $a$ is contained in a ball
$B$ centred at $o$ with radius $R\leq 1$.  Let $\cD=\sqrt{\cL-b}$, and observe that, by the spectral 
theorem, $s\mapsto \cos(s\cD) a$ is an $L^2$-valued continuous function on $\BR$, and that
it takes value $a$ at $0$.  At least on $\ld{M}$,
$$
\cH_t a 
= \e^{-bt}\, \ir h_t^\BR(s) \, \cos(s\cD)a \wrt s,
$$ 
where $h_t^\BR(s) = (4\pi t)^{-1/2} \, \e^{-s^2/(4t)}$ is the one-dimensional Euclidean heat kernel.  
Write $\ds 1 = \om_0 + \sum_{k=1}^\infty \, \om_k$, where $\supp \,\om_0 \subseteq [-1,1]$,
and $\supp \,\om_k \subseteq [-k-1,-k]\cup [k,k+1]$.   
We are led to consider the integrals
$$
I_t^k
:= \e^{-bt} \, \ir h_t^\BR(s) \, \om_k(s) \, \cos(s\cD)a \wrt s\,,
$$
where $k$ is a nonnnegative integer.

Consider first the case where $k>0$. Clearly 
$
\bignorm{\cos(s\cD)a}{L^2} 
\leq \bignorm{a}{L^2} 
\leq \mu(B)^{-1/2}, 
$
and moreover $\supp(\cos(s\cD)a) \subseteq B_{k+2}(o)$ by finite propagation speed; hence, 
by \eqref{f: normah1funzioneL2} and \eqref{f: volume growth}, we conclude that
$$
\bignorm{\cos(s\cD)a}{\mathfrak h^1} 
\leq C \, \frac{\mu\big(B_{k+2}(o)\big)^{1/2}}{\mu(B)^{1/2}} 
\leq C \, k^\nu \, \e^{\beta k}
\quant s: k \leq \mod{s}<k+1,
$$
where $\nu$ is an appropriate nonnegative number.  Remember that $a$ is fixed in this argument, so that 
we do not care about the factor $\mu(B)^{-1/2}$.  Therefore
$$
\bignorm{I_t^k}{\mathfrak h^1}
\leq C \, k^\nu \, \e^{\beta k}\, \int_k^{k+1}  h_t^\BR(s) \wrt s
\leq C \, t^{-1/2}\, k^\nu \, \e^{\beta k-k^2/(4t)}.  
$$
It is straightforward to check that $t^{-1/2}\, k^\nu \, \e^{\beta k-k^2/(4t)} \leq \e^{-c/t-ck^2}$
for some suitable constant $c$ and all $t$ small enough, uniformly in $k$.  Thus, 
$$
\Bignorm{\sum_{k=1}^\infty I_t^k}{\mathfrak h^1}
\leq C\, \sum_{k=1}^\infty \, \e^{-c/t-ck^2}
$$
which tends to $0$ as $t\to 0^+$.  

Thus, it remains to estimate $\bignorm{I_t^0-a}{\mathfrak h^1}$.  
Notice that $t\mapsto I_t^0 -a$ is an $L^2$-valued
continuous function on $\BR$.  
First, we show that $I_t^0-a$ is weakly convergent to $0$ in $\ld{M}$.  Indeed, 
$(a,\psi)_{\ld{M}} = \prodo{(\cos(\cdot \cD)a,\psi)_{\ld{M}}}{\de_{0}}_{\BR}$, 
where $\prodo{\cdot}{\cdot}_{\BR}$ denotes the pairing between
measures and continuous functions on $\BR$.  Thus, if $m$ denotes the Lebesgue measure on $\BR$,
$$
\begin{aligned}
\big(I_t^0 - a, \psi\big)_{\ld{M}}
& = \e^{-bt} \, \ir h_t^\BR(s) \, \om_0(s) \, \big(\cos(s\cD)a,\psi)_{L^2} \wrt s - (a,\psi)_{L^2} \\
& =  \prodo{\big(\cos(\cdot \cD)a,  \psi\big)_{L^2}}{\e^{-bt} \,h_t^\BR \, \om_0 \wrt m\, - \de_{0}}_{\BR},
\end{aligned}  
$$
which tends to $0$ as $t\to 0^+$, because the measure ${\e^{-bt} \,h_t^\BR \, \om_0 \wrt m\, - \de_{0}}$  
is weakly convergent to $0$.  Furthermore, 
$$
\begin{aligned}
\bignorm{I_t^0}{L^2}
\leq \e^{-bt} \, \ir h_t^\BR(s) \, \om_0(s) \, \bignorm{\cos(s\cD)a}{\ld{M}} \wrt s \,  
\leq \bignorm{a}{L^2},
\end{aligned}
$$ 
so that $\limsup_{t\to 0^+} \, \bignorm{I_t^0}{L^2} \leq \bignorm{a}{L^2}$.
By a well known result \cite[Proposition~3.32]{B}, $I_t^0 \to a$ strongly in $\ld{M}$.  

Note that the support of $I_t^0$ is contained in $B_2(o)$.  Hence, by \eqref{f: normah1funzioneL2},
$$
\bignorm{I_t^0-a}{\mathfrak h^1}
\leq C\,\mu\big(B_2(o)\big)^{1/2} \, \bignorm{I_t^0-a}{L^2}
\to 0
$$
as $t \to 0^+$, as required to conclude the proof of the strong continuity of $\{\cH_t\}$ on $\ghu{M}$.

As explained in \cite[Section 2.1.1, p.\ 24]{Haa}, \rmii\ is an immediate consequence of \rmi\ and the Hille--Yosida theorem.

The first statement in \rmiii\ follows directly from the sectoriality of $\cL$ proved in \rmii.
To prove the last statement in \rmiii\ observe that, by spectral theory,  
$$
\cL \, (\la+ \cL)^{-1} = \cI - \la\, (\la+\cL)^{-1},
$$
at least on $\ld{M}$.  The required estimate follows from this and the first
statement.  
\end{proof}

\begin{remark}\label{r: heat semigroup unbounded on huM}
It is worth pointing out that the heat semigroup is \emph{not} uniformly bounded
on $\hu{\BD}$, where $\BD$ denotes the hyperbolic disk.  Indeed, arguing as in \cite[Section 2.7]{Ce}, 
one can show that there exists a positive constant $c$ such that 
$$
\bigopnorm{\cH_t}{\hu{\BD}}
\geq c \, (1+t)
\quant t \in \BR^+.  
$$
We omit the proof, because the result above is not essential for the
theory of Hardy type spaces developed in this paper and the details are somewhat long and intricate.  
This motivates the introduction of the spaces $\gXga{M}$ in 
Section \ref{s: A one parameter family of Hardy-type spaces} 
and explains why we do not base our analysis on the spaces $\Xh{M}$
introduced in \cite{MMV1}.
\end{remark}

\subsection{The operator \texorpdfstring{$\cU$}{U} on \texorpdfstring{$\ghu{M}$}{h1(M)}} \label{subs: The operator cU on ghuM}
A central role in what follows will be played by the family 
$\big\{\cU_\si := \cL\,(\si+\cL)^{-1}$: $\si>0\big\}$ of (spectrally defined) operators. 

Clearly $\cU_\si$ is bounded on $\ld{M}$, by the spectral theorem.  
A straightforward consequence of the fact that
$\cL$ generates the contraction semigroup $\{\cH_t\}$ on $\lp{M}$
for every $p$ in $[1,\infty]$ is that $\cU_\si$ extends to a bounded operator on $\lp{M}$
for all such values of $p$.  Furthermore, for each $\la>0$ the operator $\cU_\si$ is an
isomorphism of $\lp{M}$, $1<p\leq 2$ (because $b>0$, hence the bottom of the $\lp{M}$
spectrum of $\cL$ is positive), and it is injective on $\lu{M}$ \cite[Proposition~2.4]{MMV1}.
We shall often write $\cU$ instead of $\cU_1$.

The sectoriality of $\cL$ on $\ghu{M}$ given by Theorem \ref{t: bound semigroup O} implies the following properties of the operators $\cU_\si$ and their fractional powers.

\begin{proposition} \label{p: consequences of sectoriality}
Assume that $\si>0$.
\begin{enumerate}
\item[\itemno1]
$\cU_\si$ is an injective bounded sectorial operator of angle $\pi/2$ on $\ghu{M}$. 
\item[\itemno2]
$\cU_\si^\ga$ is injective and bounded on $\ghu{M}$ for all $\ga \in \BC$ with $\Re\ga > 0$.
\end{enumerate}  
\end{proposition}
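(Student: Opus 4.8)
The plan is to read off both assertions from the sectoriality of $\cL$ of angle $\pi/2$ on $\ghu{M}$ established in Theorem~\ref{t: bound semigroup O}. For part~\rmi, boundedness of $\cU_\si = \cL\,(\si+\cL)^{-1}$ on $\ghu{M}$ is immediate from the second estimate in Theorem~\ref{t: bound semigroup O}~\rmiii\ (take $\la=\si$). Injectivity follows from the injectivity of $\cU_\si$ on $\lu{M}$ recalled above \cite[Proposition~2.4]{MMV1}: since $\ghu{M}$ is continuously imbedded in $\lu{M}$ and the two realisations of $\cU_\si$ agree on $\ghu{M}$, any $f\in\ghu{M}$ with $\cU_\si f=0$ vanishes already in $\lu{M}$. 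The substantive point is sectoriality, which I would establish by transporting the resolvent estimate for $\cL$ through the M\"obius transform $\phi(z)=z/(\si+z)$, for which $\cU_\si=\phi(\cL)$ and $\phi$ carries the right half-plane onto the disc $\OV{D}(1/2,1/2)\subset\OV{S}_{\pi/2}$.

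Concretely, for $\la\neq1$ one has the factorisation $\la-\cU_\si=(\si+\cL)^{-1}\,[(\la-1)\cL+\la\si]$, from which, writing $w=w(\la):=\si\la/(\la-1)$, a short computation yields
\begin{equation*}
\la\,(\la-\cU_\si)^{-1}=\frac{\la}{\la-1}\,I-\frac{1}{\la-1}\,w\,(w+\cL)^{-1}.
\end{equation*}
I would then fix $\om'\in(\pi/2,\pi)$ and let $\la$ range over $\BC\setminus\OV{S}_{\om'}$. On this region $\la$ is bounded away from the pole at $1$, so the scalar factors $\la/(\la-1)$ and $1/(\la-1)$ stay bounded; moreover, since $\phi^{-1}(v)=\si v/(1-v)$ maps the exterior of $\OV{D}(1/2,1/2)$ into the open left half-plane and $w(\la)=-\phi^{-1}(\la)$, the point $w(\la)$ lies in the open right half-plane, and a compactness argument shows that in fact $w(\la)\in S_{\om''}$ for a single $\om''<\pi/2$ depending only on $\om'$. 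The reformulated resolvent estimate~\eqref{f: equiv ref sectoriality} for $\cL$ then bounds $\bigopnorm{w\,(w+\cL)^{-1}}{\ghu{M}}$ uniformly (and guarantees that $(w+\cL)^{-1}$ exists), whence $\sup_{\la\in\BC\setminus\OV{S}_{\om'}}\bigopnorm{\la\,(\la-\cU_\si)^{-1}}{\ghu{M}}<\infty$. This is the required resolvent estimate, and it simultaneously forces $\si(\cU_\si)\subseteq\OV{S}_{\pi/2}$, proving~\rmi.

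For part~\rmii, I would apply the theory of fractional powers of sectorial operators \cite{Haa} to the bounded, injective, sectorial operator $\cU_\si$ of angle $\pi/2<\pi$. For $0<\Re\ga<1$ the Balakrishnan formula represents $\cU_\si^\ga$ through an integral of the form $\int_0^\infty t^{\ga-1}\,\cU_\si\,(t+\cU_\si)^{-1}\wrt t$; splitting at $t=1$ and using $\bigopnorm{\cU_\si\,(t+\cU_\si)^{-1}}{\ghu{M}}\leq C\min\bigl(1,\bigopnorm{\cU_\si}{\ghu{M}}/t\bigr)$ (the first bound from sectoriality, the second because $\cU_\si$ is bounded) makes the integral converge and shows $\cU_\si^\ga$ bounded on $\ghu{M}$. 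The general case $\Re\ga>0$ then follows from the law of exponents $\cU_\si^\ga=(\cU_\si^{\ga/k})^k$ with $k$ any integer exceeding $\Re\ga$, since each factor has exponent of real part in $(0,1)$. Finally, the injectivity of $\cU_\si^\ga$ is the standard consequence, within this calculus, of the injectivity of the injective sectorial operator $\cU_\si$.

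The main obstacle is the sectoriality in~\rmi; more precisely, the one genuinely geometric ingredient is the uniform containment $w(\la)\in S_{\om''}$ with $\om''<\pi/2$ as $\la$ ranges over $\BC\setminus\OV{S}_{\om'}$. Everything else is either immediate from Theorem~\ref{t: bound semigroup O} or routine once the resolvent identity above is available.
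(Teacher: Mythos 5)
Your proposal is correct, and its overall strategy coincides with the paper's: both deduce everything from the sectoriality of $\cL$ on $\ghu{M}$ established in Theorem~\ref{t: bound semigroup O}, obtain injectivity of $\cU_\si$ on $\ghu{M}$ from its injectivity on $\lu{M}$, and get boundedness from Theorem~\ref{t: bound semigroup O}~\rmiii. The difference is in execution: where you work by hand, the paper simply cites the abstract theory of sectorial operators, namely \cite[Proposition~2.1.1~(f)]{Haa} for the fact that sectoriality of angle $\om$ passes from $A$ to $A(1+A)^{-1}$ (hence to $\cU_\si$), and \cite[Proposition~3.1.1]{Haa} for boundedness and injectivity of fractional powers of a bounded injective sectorial operator. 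Your resolvent identity $\la(\la-\cU_\si)^{-1}=\frac{\la}{\la-1}\,I-\frac{1}{\la-1}\,w(w+\cL)^{-1}$ with $w=\si\la/(\la-1)$ (which I checked; it is exact) is precisely the computation underlying the first citation, and your Balakrishnan-integral argument plus the additivity law is the standard proof behind the second. So your route is a self-contained, more elementary unpacking of the paper's two black boxes; the citations buy brevity and sidestep the one genuinely delicate point in your write-up.

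That point is the uniform containment $w(\la)\in S_{\om''}$ with $\om''<\pi/2$ for $\la\in\BC\setminus\OV{S}_{\om'}$, which you attribute to ``a compactness argument''. The claim is true, but compactness alone does not quite close it: $w(\la)\to 0$ as $\la\to 0$, and $0$ lies on the boundary of every sector, so the limit point must be handled separately. Near $\la=0$ one has $w(\la)\sim -\si\la$, whence $\bigmod{\arg w(\la)}\to\bigmod{\arg\la-\pi}\leq\pi-\om'<\pi/2$ for $\la$ in the closed complementary sector, which supplies the missing control. Alternatively, one can verify directly that the image of $\BC\setminus\OV{S}_{\om'}$ under $\la\mapsto\si\la/(\la-1)$ is the lens bounded by the two circular arcs from $0$ to $\si$ (images of the rays $\arg\la=\pm\om'$), each making angle $\pi-\om'$ with the chord $[0,\si]$, so the image lies in $\OV{S}_{\pi-\om'}\setminus\{0\}$ and one may take any $\om''\in(\pi-\om',\pi/2)$. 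With this detail supplied, your argument is complete.
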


\begin{proof}
First we prove \rmi. Since $\cU_\si$ is injective on $\lu{M}$ it is also injective on $\ghu{M}$.  
By Theorem~\ref{t: bound semigroup O}~\rmii, $\cL$ is a sectorial operator of angle $\pi/2$ on $\ghu{M}$.  
By \cite[Proposition~2.1.1~(f)]{Haa} so is $\cU_\sigma$, and moreover the boundedness of $\cU_\sigma$ follows from Theorem~\ref{t: bound semigroup O}~\rmiii.

Property \rmii\ immediately follows from \rmi\ and \cite[Proposition 3.1.1]{Haa}.
\end{proof}

\begin{remark}\label{rem: imaginarypowers}
The condition $\Re\ga > 0$ in Proposition \ref{p: consequences of sectoriality} \rmii\ cannot be relaxed in general,
for the operators $\cU_\si^{iu}$, for $u$ real and $\si>0$, 
may be unbounded from $\ghu{M}$ to $\lu{M}$ --- and, \emph{a fortiori}, on $\ghu{M}$.
Indeed, suppose for instance that $M$ is a complex symmetric space of the noncompact type.  
We argue by contradiction.  If $\cU_\si^{iu}$ were bounded from $\ghu{M}$ to $\lu{M}$
for some $u \neq 0$, then so would be the operator $\cL^{iu}$, because
$\cL^{iu} = \cU_{\si}^{iu} \, \big(\si+\cL\big)^{iu}$ and $\big(\si+\cL\big)^{iu}$ 
is bounded on $\ghu{M}$ \cite[Theorem~7]{MVo}. 
However, it is known \cite{MMV4} that $\cL^{iu}$ does not map
$\hu{M}$ to $\lu{M}$.
Since $\hu{M}$ is contained in $\ghu{M}$,
$\cL^{iu}$ does not map $\ghu{M}$ to $\lu{M}$ either. 
\end{remark} 

An important consequence of sectoriality of an operator $A$ on a Banach space is the boundedness of certain holomorphic functions of $A$.
More precisely, suppose that $0<\te\leq \pi$.  We denote by $H_0^\infty(S_\te)$ the space of all
bounded holomorphic functions on the sector $S_\te$ for which there exist positive constants~$C$ 
and $s$ such that 
$$
\bigmod{f(z)}
\leq C \, \frac{\mod{z}^s}{1+\mod{z}^{2s}}. 
\quant z \in S_\te;
$$
$H_0^\infty(S_\te)$ is called the \emph{Riesz--Dunford class} on $S_\te$.  The \emph{extended 
Riesz--Dunford class} $\cE(S_\te)$ is the Banach algebra generated by $H_0^\infty(S_\te)$,
the constant functions and the function $z\mapsto (1+z)^{-1}$.  For more on these classes
of functions, see \cite[pp.~27--29]{Haa}.  
Recall that if $A$ is a sectorial operator of angle $\om$ on a Banach space $\cY$ and $f$
belongs to the extended Riesz--Dunford class $\cE(S_\te)$ for some $\te > \om$, then $f(A)$
is bounded on $\cY$ \cite[Theorem~2.3.3]{Haa}. 

The functional calculus for sectorial operators is used in the proof of the
following proposition, which contains additional information on the operators $\cU_\si$.  

\begin{proposition} \label{p: invariance of the range} 
Assume that $ \si_1, \si_2, \ga >0$.
\begin{enumerate}
\item[\itemno1]
$\cU_{\si_1}^\gamma \big[\ghu{M}\big] = \cU_{\si_2}^\gamma \big[\ghu{M}\big]$.
\item[\itemno2]
There exists a constant $C$ such that
$$
C^{-1} \bignorm{\cU_{\si_1}^{-\gamma} f}{\mathfrak h^1} \leq \bignorm{\cU_{\si_2}^{-\gamma} f}{\mathfrak h^1} \leq C \bignorm{\cU_{\si_1}^{-\gamma} f}{\mathfrak h^1}  
$$
for every $f$ in $\cU^\gamma\big[\ghu{M}\big]$.
\end{enumerate}
\end{proposition}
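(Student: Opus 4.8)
The plan is to exploit the fact that $\cU_{\si_1}$ and $\cU_{\si_2}$ are functions of the single sectorial operator $\cL$, and to relate their fractional powers through a bounded multiplier in the extended Riesz--Dunford class. Write $u_\si(z) = z/(\si+z)$, so that $\cU_\si = u_\si(\cL)$, and consider
$$
m(z) := \Bigl(\frac{\si_1+z}{\si_2+z}\Bigr)^{\ga},
$$
with the branch of the power that is positive on $(0,\infty)$. Since $-\si_1$ and $-\si_2$ lie on the negative real axis and the M\"obius map $z\mapsto (\si_1+z)/(\si_2+z)$ has real coefficients, $m$ is holomorphic and bounded on every sector $S_\te$ with $\te<\pi$. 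The whole argument rests on the elementary scalar identity $m(z)\,u_{\si_1}(z)^\ga = u_{\si_2}(z)^\ga$.

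First I would show that $m\in\cE(S_\te)$. The only obstruction is that $m$ has two distinct finite limits, $L_\infty := \lim_{z\to\infty} m(z) = 1$ and $L_0 := \lim_{z\to 0} m(z) = (\si_1/\si_2)^\ga$, so that $m$ is not a constant plus an element of $H_0^\infty(S_\te)$. Subtracting off the generator $(1+z)^{-1}$ of $\cE(S_\te)$ remedies this: setting
$$
r(z) := m(z) - L_\infty - (L_0-L_\infty)\,(1+z)^{-1},
$$
a short computation with the expansions of $m$ at $0$ and at $\infty$ gives $r(z) = \Omi(\mod{z})$ as $z\to 0$ and $r(z) = \Omi(\mod{z}^{-1})$ as $z\to\infty$; since $r$ is also bounded and holomorphic on $S_\te$, it belongs to $H_0^\infty(S_\te)$ (with $s=1$). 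Hence $m = L_\infty + (L_0-L_\infty)(1+z)^{-1} + r$ lies in $\cE(S_\te)$, and, as $\cL$ is sectorial of angle $\pi/2$ on $\ghu{M}$ by Theorem~\ref{t: bound semigroup O}~\rmii, the operator $m(\cL)$ is bounded on $\ghu{M}$ by \cite[Theorem~2.3.3]{Haa}. Applying the same reasoning to $1/m$ (i.e.\ interchanging $\si_1$ and $\si_2$) shows that $(1/m)(\cL)$ is bounded; by multiplicativity of the functional calculus $(1/m)(\cL)$ is the inverse of $m(\cL)$, so $m(\cL)$ is an isomorphism of $\ghu{M}$.

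Next I would establish the operator identity $\cU_{\si_2}^\ga = m(\cL)\,\cU_{\si_1}^\ga$ on $\ghu{M}$. The crux is to identify the fractional power $\cU_\si^\ga$, defined in Proposition~\ref{p: consequences of sectoriality} as a power of the sectorial operator $\cU_\si = u_\si(\cL)$, with $(u_\si^\ga)(\cL)$; this is precisely the composition rule for the holomorphic functional calculus (see \cite{Haa}), whose hypotheses are met because $u_\si$ maps $S_\te$ into a sector and $\cU_\si$ is sectorial of angle $\pi/2$ by Proposition~\ref{p: consequences of sectoriality}~\rmi. Granting this, multiplicativity of the $\cL$-calculus and the scalar identity $m\cdot u_{\si_1}^\ga = u_{\si_2}^\ga$ give $m(\cL)\,\cU_{\si_1}^\ga = (m\cdot u_{\si_1}^\ga)(\cL) = (u_{\si_2}^\ga)(\cL) = \cU_{\si_2}^\ga$. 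I expect this identification to be the main obstacle, as it is the only step beyond elementary manipulation and it is where the hypotheses of the composition rule must be verified.

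Finally I would read off both conclusions. Since $m(\cL)$ commutes with $\cU_{\si_1}^\ga$ (both being functions of $\cL$), the identity gives, for every $g\in\ghu{M}$, $\cU_{\si_2}^\ga g = \cU_{\si_1}^\ga\bigl(m(\cL)g\bigr)\in\cU_{\si_1}^\ga[\ghu{M}]$, whence $\cU_{\si_2}^\ga[\ghu{M}]\subseteq\cU_{\si_1}^\ga[\ghu{M}]$; the reverse inclusion follows by interchanging $\si_1$ and $\si_2$, proving \rmi. For \rmii, take $f\in\cU^\ga[\ghu{M}]$, which by \rmi coincides with both $\cU_{\si_1}^\ga[\ghu{M}]$ and $\cU_{\si_2}^\ga[\ghu{M}]$, and write $f = \cU_{\si_1}^\ga h_1 = \cU_{\si_2}^\ga h_2$ with $h_1,h_2\in\ghu{M}$. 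Substituting $\cU_{\si_2}^\ga = m(\cL)\cU_{\si_1}^\ga$ and using the injectivity of $\cU_{\si_1}^\ga$ yields $h_1 = m(\cL)h_2$, and hence $h_2 = (1/m)(\cL)h_1$. As $\bignorm{\cU_{\si_i}^{-\ga}f}{\mathfrak h^1} = \bignorm{h_i}{\mathfrak h^1}$ by the very definition of these norms, the boundedness of $m(\cL)$ and of $(1/m)(\cL)$ gives the two-sided estimate, with $C = \max\{\bigopnorm{m(\cL)}{\mathfrak h^1},\bigopnorm{(1/m)(\cL)}{\mathfrak h^1}\}$.
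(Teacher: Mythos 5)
Your proposal is correct, and its engine is the same as the paper's: the multiplier $\varphi(z)=\bigl((\si_1+z)/(\si_2+z)\bigr)^\ga$ lies in $\cE(S_\te)$, so by sectoriality of $\cL$ on $\ghu{M}$ (Theorem~\ref{t: bound semigroup O}~\rmii) and \cite[Theorem~2.3.3]{Haa} it defines a bounded operator, which together with its inverse (obtained by swapping $\si_1$ and $\si_2$) yields both the equality of ranges and the two-sided norm estimate. Where you genuinely diverge is in how the operator identity $\cU_{\si_2}^\ga=\varphi(\cL)\,\cU_{\si_1}^\ga$ is justified: you stay entirely inside the abstract $\ghu{M}$-calculus and invoke the composition rule of \cite{Haa} to identify the fractional power $\cU_\si^\ga$ of the sectorial operator $\cU_\si$ with $(u_\si^\ga)(\cL)$ --- correctly flagging this as the delicate step, whose hypotheses do hold since $u_\si(S_\te)\subseteq S_\te$ and $\cU_\si$ is sectorial of angle $\pi/2$ by Proposition~\ref{p: consequences of sectoriality}~\rmi. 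The paper instead sidesteps the composition rule altogether: it verifies the identities $\cU_{\si_1}^{-\ga}\cU_{\si_2}^\ga=\varphi(\cL)$, $\cU_{\si_2}^\ga=\cU_{\si_1}^\ga\bigl(\cU_{\si_1}^{-\ga}\cU_{\si_2}^\ga\bigr)$, etc.\ on $L^2(M)$, where they are immediate from the spectral theorem, and then transfers them to $\ghu{M}$ by density and boundedness (via Proposition~\ref{p: uniflim}). The paper's route is more elementary, at the cost of an implicit compatibility of the $L^2$ and $\ghu{M}$ calculi on their common domain; your route is self-contained within the sectorial calculus but puts the burden on checking the composition-rule hypotheses, which you do correctly. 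A side benefit of your write-up is that you actually verify $\varphi\in\cE(S_\te)$ (by subtracting the limits at $0$ and $\infty$ against $(1+z)^{-1}$ and checking the remainder is in $H_0^\infty(S_\te)$), a point the paper leaves as ``easily checked''.
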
 
\begin{proof}
It is easily checked that the function $\varphi$ defined by
\[
\varphi(z) = \left(\frac{z+\si_1}{z+\si_2}\right)^\gamma
\]
belongs to the class $\cE(S_\te)$ for all $\theta \in (0,\pi)$. Since
$$
\cU_{\si_1}^{-\gamma} \cU_{\si_2}^\gamma = (\si_1+\cL)^\gamma (\si_2+\cL)^{-\gamma} = \varphi(\cL)
$$
on $L^2(M)$, and $\cL$ is sectorial of angle $\pi/2$ on $\ghu{M}$ by Theorem~\ref{t: bound semigroup O}~\rmii, we conclude by \cite[Theorem~2.3.3]{Haa} and Proposition \ref{p: uniflim} that $\cU_{\si_1}^{-\gamma} \cU_{\si_2}^\gamma$ extends to a bounded operator on $\ghu{M}$. Similarly one shows that $\cU_{\si_2}^{-\gamma} \cU_{\si_1}^\gamma$ extends to a bounded operator on $\ghu{M}$. Consequently the identities
\begin{gather}
\label{f: U_inversion}
\big(\cU_{\si_2}^{-\gamma}\cU_{\si_1}^\gamma\big)\,  \big(\cU^{-\gamma}_{\si_1}\cU_{\si_2}^\gamma\big)
= \cJ 
= \big(\cU_{\si_1}^{-\gamma}\cU_{\si_2}^\gamma\big) \, \big(\cU^{-\gamma}_{\si_2}\cU_{\si_1}^\gamma\big), \\
\label{f: U_exchange}
\cU_{\si_2}^\gamma = \cU_{\si_1}^\gamma \big( \cU_{\si_1}^{-\gamma} \cU_{\si_2}^\gamma \big), \qquad \cU_{\si_1}^\gamma = \cU_{\si_2}^\gamma \big( \cU_{\si_2}^{-\gamma} \cU_{\si_1}^\gamma \big),
\end{gather}
initially valid on $L^2(M)$, extend by density and boundedness to $\ghu{M}$. From \eqref{f: U_inversion} we deduce that the extensions of $\cU_{\si_2}^{-\gamma}\cU_{\si_1}^\gamma$ and $\cU_{\si_1}^{-\gamma}\cU_{\si_2}^\gamma$ are isomorphisms of $\ghu{M}$, and from this and \eqref{f: U_exchange} it follows that $\cU_{\si_1}\big[\ghu{M}\big] = \cU_{\si_2}\big[\ghu{M}\big]$. This proves \rmi.

From \eqref{f: U_exchange} we also deduce that, for all $f \in \cU^\gamma\big[\ghu{M}\big]$,
\[
\cU_{\si_1}^{-\gamma} f = \big( \cU_{\si_1}^{-\gamma} \cU_{\si_2}^\gamma \big) \cU_{\si_2}^{-\gamma} f, \qquad \cU_{\si_2}^{-\gamma}f = \big( \cU_{\si_2}^{-\gamma} \cU_{\si_1}^\gamma \big) \cU_{\si_1}^{-\gamma} f,
\]
and the $\ghu{M}$-boundedness of $\cU_{\si_1}^{-\gamma} \cU_{\si_2}^\gamma$ and $\cU_{\si_2}^{-\gamma} \cU_{\si_1}^\gamma$ gives \rmii.
\end{proof}

\section{A one-parameter family of Hardy type spaces} 
\label{s: A one parameter family of Hardy-type spaces}

\subsection{Definition and properties of \texorpdfstring{$\gXga{M}$}{Xgamma(M)}}
By Proposition~\ref{p: consequences of sectoriality}~\rmii,
the operator $\cU^\ga$ is bounded and injective on $\ghu{M}$ for all $\ga>0$.
Thus, the following definition makes sense.  

\begin{definition} \label{def: gXga}
Suppose that $\ga >0$.  
We denote by $\gXga{M}$ the space $\cU^\ga\big[\ghu{M}\big]$, endowed with 
the norm that makes $\cU^\ga$ an isometry, i.e., set 
$$
\bignorm{f}{\mathfrak X^{\gamma}}
:= \bignorm{\cU^{-\ga} f}{\mathfrak h^1}  
\quant f \in \cU^\ga\big[\ghu{M}\big].  
$$
\end{definition}

The following proposition gives some equivalent characterisations of the spaces $\gXga{M}$, showing in particular that replacing $\cU$ with $\cU_\si$ for some $\si>0$ in the above definition would determine the same spaces (up to equivalence of norms).

\begin{proposition} \label{p: useful property}
Let $\ga,\si > 0$. For a function $f$ on $M$, the following are equivalent:
\begin{enumerate}
\item[\itemno1] $f$ is in $\gXga{M}$;
\item[\itemno2] $f$ is in $\cU_\sigma^\gamma \big[\ghu{M}]$;
\item[\itemno3] both $f$ and $\cL^{-\ga} f$ are in $\ghu{M}$.
\end{enumerate}
Moreover there exists a positive constant $C$ independent of $f$ such that
\begin{gather}
\label{f: eq_U_Usi}
C^{-1} \bignorm{f}{\mathfrak X^{\gamma}} \leq \bignorm{\cU_\sigma^{-\gamma} f}{\mathfrak{h}^1} \leq C \bignorm{f}{\mathfrak X^{\gamma}}, \\
\label{f: eq_U_Lfrac}
C^{-1} \bignorm{f}{\mathfrak X^{\gamma}} \leq \bignorm{f}{\mathfrak{h}^1} + \bignorm{\cL^{-\gamma} f}{\mathfrak{h}^1} \leq C \bignorm{f}{\mathfrak X^{\gamma}}.
\end{gather}
\end{proposition}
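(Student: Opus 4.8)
The plan is to dispatch the equivalence $\itemno1 \Leftrightarrow \itemno2$ together with \eqref{f: eq_U_Usi} immediately from Proposition~\ref{p: invariance of the range}, applied with $\si_1 = 1$ and $\si_2 = \si$: indeed $\gXga{M} = \cU_1^\ga[\ghu{M}]$ by Definition~\ref{def: gXga}, so the identity $\cU_1^\ga[\ghu{M}] = \cU_\si^\ga[\ghu{M}]$ is exactly part~\itemno1 of that proposition, while the norm comparison \eqref{f: eq_U_Usi} is part~\itemno2 with these choices. It then remains to prove $\itemno1 \Leftrightarrow \itemno3$ and \eqref{f: eq_U_Lfrac}; combined with the above this gives the full statement. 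Throughout I would work on the dense subspace $\ghu{M}\cap\ld{M}$ (which contains the finite linear combinations of $\ghu{M}$-atoms), where every spectrally defined function of $\cL$ below is bounded on $\ld{M}$ and all operator identities hold by the spectral theorem, and then extend by continuity. Both directions rest on the boundedness on $\ghu{M}$ of suitable functions of $\cL$, which I would obtain from the sectoriality of $\cL$ of angle $\pi/2$ (Theorem~\ref{t: bound semigroup O}~\itemno2) via the holomorphic functional calculus, that is, by checking membership in the extended Riesz--Dunford class $\cE(S_\te)$ for some $\te\in(\pi/2,\pi)$ and invoking \cite[Theorem~2.3.3]{Haa}.

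For $\itemno1 \Rightarrow \itemno3$, given $f = \cU^\ga g$ with $g = \cU^{-\ga}f \in \ghu{M}$, I would note that $f = \cU^\ga g \in \ghu{M}$ since $\cU^\ga$ is bounded (Proposition~\ref{p: consequences of sectoriality}~\itemno2), while $\cL^{-\ga}f = \cL^{-\ga}\cU^\ga g = (\cI+\cL)^{-\ga}g$, so it suffices to show that $(\cI+\cL)^{-\ga}$ is bounded on $\ghu{M}$. This follows once one checks $(1+z)^{-\ga}\in\cE(S_\te)$, which I would do by writing $(1+z)^{-\ga} = [(1+z)^{-\ga}-(1+z)^{-N}] + (1+z)^{-N}$ for an integer $N>\ga$: the bracket lies in $H_0^\infty(S_\te)$ (it vanishes to order $1$ at $0$ and decays like $\mod{z}^{-\ga}$ at infinity), and $(1+z)^{-N}$ is a product of generators of $\cE(S_\te)$. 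Since $\bignorm{g}{\mathfrak{h}^1} = \bignorm{f}{\mathfrak X^{\gamma}}$, this yields the right-hand inequality in \eqref{f: eq_U_Lfrac}.

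For the converse $\itemno3 \Rightarrow \itemno1$, the idea is to produce a decomposition $\cU^{-\ga} = P(\cL) + Q(\cL)\,\cL^{-\ga}$ with $P,Q\in\cE(S_\te)$, so that for $f,\cL^{-\ga}f\in\ghu{M}$ the function $g := \cU^{-\ga}f = P(\cL)f + Q(\cL)\cL^{-\ga}f$ lies in $\ghu{M}$, whence $f = \cU^\ga g \in \gXga{M}$ with the desired norm control (the left-hand inequality in \eqref{f: eq_U_Lfrac}). At the level of symbols this asks for
\[
z^{-\ga}(1+z)^\ga = P(z) + Q(z)\,z^{-\ga}, \qquad z \in S_\te,
\]
with $P,Q\in\cE(S_\te)$. \textbf{The hard part is precisely this splitting when $\ga>1$:} matching limits forces $P(\infty)=1$ and $Q(0)=1$, but the subleading terms $\ga z^{1-\ga},\binom{\ga}{2}z^{2-\ga},\dots$ of the left-hand side near $0$ are singular exactly for the finitely many exponents $k-\ga<0$, and a careless choice of $Q$ reintroduces growth of order $\mod{z}^{\ga-1}$ at infinity. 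I would resolve this by letting $Q$ reproduce only the \emph{finite singular part}: choose $Q$ to be a finite linear combination of the generators $(1+z)^{-j}$, hence bounded and vanishing at infinity, whose Taylor expansion at $0$ up to order $\lceil\ga\rceil-1$ coincides with that of $(1+z)^\ga$ (a triangular, hence solvable, system for the coefficients). Setting $P(z) := z^{-\ga}[(1+z)^\ga - Q(z)]$, the Taylor matching gives $(1+z)^\ga - Q(z) = \mathrm{O}(z^{\lceil\ga\rceil})$ near $0$, so $P(z) = \mathrm{O}(z^{\lceil\ga\rceil-\ga})$ vanishes there, while $P(z)\to 1$ at infinity; after subtracting from $P$ a constant and finitely many generators $(1+z)^{-k}$ the remainder lands in $H_0^\infty(S_\te)$, so $P\in\cE(S_\te)$ as well. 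For $0<\ga\le1$ no singular subleading term arises and one may simply take $Q(z)=(1+z)^\ga-z^\ga$ and $P\equiv 1$, and the integer case is elementary.

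I would then assemble the pieces: the identity $\cU^{-\ga} = P(\cL)+Q(\cL)\cL^{-\ga}$, valid on $\ghu{M}\cap\ld{M}$ by the spectral theorem, extends to $\ghu{M}$ by boundedness of $P(\cL)$ and $Q(\cL)$, yielding $\itemno1 \Leftrightarrow \itemno3$ with \eqref{f: eq_U_Lfrac}; together with $\itemno1\Leftrightarrow\itemno2$ this completes the proof. The only genuinely delicate point is the $\cE(S_\te)$-splitting of $z^{-\ga}(1+z)^\ga$ for $\ga>1$; everything else is bookkeeping within the functional calculus already established for $\cL$ on $\ghu{M}$.
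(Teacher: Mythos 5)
Your proposal is correct, and for most of the statement it coincides with the paper's proof: the equivalence \rmi$\,\Leftrightarrow\,$\rmii\ together with \eqref{f: eq_U_Usi} is obtained, exactly as in the paper, from Proposition~\ref{p: invariance of the range}, and your argument for \rmi$\,\Rightarrow\,$\rmiii\ (writing $f=\cU^\ga g$, $\cL^{-\ga}f=(\cI+\cL)^{-\ga}g$, and proving $(\cI+\cL)^{-\ga}$ bounded on $\ghu{M}$) is the paper's argument, with a fuller justification: the paper cites Theorem~\ref{t: bound semigroup O}~\rmiii\ for the boundedness of $(\cI+\cL)^{-\ga}$, which literally covers only integer exponents, whereas your verification that $(1+z)^{-\ga}\in\cE(S_\te)$ is what is actually needed. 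The genuine divergence is in \rmiii$\,\Rightarrow\,$\rmi, where the two key identities are
\[
\cU^{-\ga}f=\vp(\cL)\,\big(\cI+\cL^{-\ga}\big)f,\quad \vp(z)=\frac{(1+z)^\ga}{1+z^\ga},
\qquad\text{versus}\qquad
\cU^{-\ga}f=P(\cL)f+Q(\cL)\,\cL^{-\ga}f,
\]
the former being the paper's (with the assertion that $\vp\in\cE(S_\te)$ for every $\te\in(\pi/2,\pi)$), the latter yours, with $Q$ a finite combination of resolvent powers Taylor-matched against $(1+z)^\ga$ at the origin and $P$ the resulting remainder. Your extra labour at what you call ``the hard part'' is not wasted: the paper's claim about $\vp$ is in fact false once $\ga\ge 2$, because the principal branch of $1+z^\ga$ vanishes at $z=\e^{\pm i\pi/\ga}$, and these points lie in \emph{every} sector $S_\te$ with $\te>\pi/2$ as soon as $\pi/\ga\le\pi/2$ (even for $1<\ga<2$ one must restrict to $\te<\pi/\ga$ rather than take arbitrary $\te\in(\pi/2,\pi)$); thus the paper's $\vp$ is not even holomorphic on the sectors where the $\ghu{M}$-calculus of $\cL$ is available. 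Your $P$ and $Q$, by contrast, are holomorphic with the required behaviour at $0$ and $\infty$ on every $S_\te$, $\te\in(0,\pi)$, so your construction proves the implication for all $\ga>0$ and in effect repairs the paper's proof. Two small remarks: the linear system determining $Q$ is not literally triangular, but it is uniquely solvable because the jets of order $\lceil\ga\rceil-1$ at $0$ of $(1+z)^{-1},\dots,(1+z)^{-\lceil\ga\rceil}$ are linearly independent; and your ``extend by continuity'' step deserves the same caveat as the paper's ``at least formally'', since $\cL^{-\ga}$ is unbounded --- the clean formulation is to define $g:=P(\cL)f+Q(\cL)\cL^{-\ga}f\in\ghu{M}$ and verify $\cU^\ga g=f$, which involves only bounded operators because $\cU^\ga Q(\cL)\cL^{-\ga}$ has the bounded symbol $Q(z)(1+z)^{-\ga}$.
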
 
\begin{proof}
The equivalence of \rmi\ and \rmii\ and the inequalities \eqref{f: eq_U_Usi} are immediate consequences of Proposition \ref{p: invariance of the range}. It remains to prove the equivalence of \rmi\ and \rmiii, as well as the inequalities \eqref{f: eq_U_Lfrac}.

Assume first that both $f$ and $\cL^{-\ga} f$ belong to $\ghu{M}$.
Observe that, at least formally, 
\begin{equation}\label{f: Uga as vp Lga}
\cU^{-\ga} f
= \vp(\cL) \, \big(\cI+\cL^{-\ga}\big) f,
\end{equation}
where $\vp$ is given by
\[
\vp(z) = \frac{(1 + z)^\ga}{1+z^\ga}.
\]  
Now, $f$ and $\cL^{-\ga}f$ are in $\ghu{M}$ by assumption, whence so is
$\big(\cI+\cL^{-\ga}\big) f$.
On the other hand, it is straightforward to check that $\vp$ 
belongs to the class $\cE(S_\theta)$ for any $\theta \in (\pi/2,\pi)$. Since $\cL$ is a sectorial operator of angle $\pi/2$ on $\ghu{M}$ by Theorem \ref{t: bound semigroup O} \rmii, we deduce that $\vp(\opL)$ is bounded on $\ghu{M}$. Hence from \eqref{f: Uga as vp Lga} we conclude that $\cU^{-\ga} f$ is in $\ghu{M}$ and
\[
\bignorm{f}{\mathfrak X^{\gamma}} = \bignorm{\cU^{-\gamma} f}{\mathfrak h^1} \leq C \big( \bignorm{f}{\mathfrak{h}^1} + \bignorm{\cL^{-\gamma} f}{\mathfrak{h}^1} \big).
\]

Conversely, suppose that $\cU^{-\ga}f$ is in $\ghu{M}$.
Observe that $f = \cU^{\ga}\cU^{-\ga} f$.
Since $\cU^{-\ga}f$ is in $\ghu{M}$ by assumption, and $\cU^{\ga}$ is bounded on $\ghu{M}$
by Proposition~\ref{p: consequences of sectoriality}~\rmii, $f$ is in $\ghu{M}$, and
\[
\bignorm{f}{\mathfrak{h}^1} \leq C \bignorm{\cU^{-\gamma} f}{\mathfrak h^1} = \bignorm{f}{\mathfrak X^{\gamma}}.
\]
Furthermore, $\cL^{-\ga} f = (\cI+\cL)^{-\ga} \cU^{-\ga}f$, and $(\cI + \cL)^{-\ga}$
is bounded on $\ghu{M}$ by Theorem \ref{t: bound semigroup O} \rmiii.
Then $\cL^{-\ga} f$ is in $\ghu{M}$ and
\[
\bignorm{\cL^{-\ga} f}{\mathfrak{h}^1} \leq C \bignorm{\cU^{-\gamma} f}{\mathfrak h^1} = \bignorm{f}{\mathfrak X^{\gamma}},
\]
as required.
\end{proof}

The operators $\cU^\gamma$ commute with any other operator in the functional calculus of $\cL$, including resolvents and the heat semigroup. Hence, from Definition \ref{def: gXga} and Theorem \ref{t: bound semigroup O} one immediately obtains the following result. 

\begin{corollary} \label{t: bound semigroup O I}
The following hold:
\begin{enumerate}
\item[\itemno1]
the heat semigroup is uniformly bounded on $\gXga{M}$ for 
every $\ga>0$;
\item[\itemno2]
$\cL$ is a sectorial operator of angle $\pi/2$ on $\gXga{M}$;
\item[\itemno3]
$\ds \sup_{\la>0} \, \, \bigopnorm{\la \, (\la + \cL)^{-1}}{\mathfrak X^{\gamma}} < \infty$, equivalently
$\ds \sup_{\la>0} \bigopnorm{\cL \, (\la + \cL)^{-1}}{\mathfrak X^{\gamma}} < \infty$.
\end{enumerate}
\end{corollary}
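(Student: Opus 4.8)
The plan is to exploit the fact that, by Definition~\ref{def: gXga}, the operator $\cU^\ga$ is an isometric isomorphism of $\ghu{M}$ onto $\gXga{M}$, and that it commutes with every operator in the functional calculus of $\cL$. This reduces all three assertions, by a one-line conjugation argument, to the corresponding statements on $\ghu{M}$ already proved in Theorem~\ref{t: bound semigroup O}.

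First I would isolate the underlying transference principle. Let $T$ be any operator in the functional calculus of $\cL$ that is bounded on $\ghu{M}$. Since $T$ and $\cU^\ga$ are both spectrally defined functions of $\cL$, they commute on $\ld{M}$, and this commutation extends to $\ghu{M}$ by density and boundedness. Hence, given $f$ in $\gXga{M}$ and writing $u = \cU^{-\ga} f \in \ghu{M}$, we have $Tf = T\cU^\ga u = \cU^\ga Tu \in \gXga{M}$, so that $T$ maps $\gXga{M}$ into itself and
\[
\bignorm{Tf}{\mathfrak X^{\gamma}} = \bignorm{Tu}{\mathfrak h^1} \leq \bigopnorm{T}{\mathfrak h^1}\,\bignorm{u}{\mathfrak h^1} = \bigopnorm{T}{\mathfrak h^1}\,\bignorm{f}{\mathfrak X^{\gamma}},
\]
whence $\bigopnorm{T}{\mathfrak X^{\gamma}} \leq \bigopnorm{T}{\mathfrak h^1}$. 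Applying this with $T = \cH_t$ and taking the supremum over $t>0$ gives \rmi\ from Theorem~\ref{t: bound semigroup O}~\rmi; applying it with $T = \la\,(\la+\cL)^{-1}$ and with $T = \cL\,(\la+\cL)^{-1}$, and taking the supremum over $\la>0$, gives \rmiii\ from Theorem~\ref{t: bound semigroup O}~\rmiii, the equivalence of the two suprema coming, as in the proof of that theorem, from $\cL\,(\la+\cL)^{-1} = \cI - \la\,(\la+\cL)^{-1}$.

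For \rmii\ I would argue that the realization of $\cL$ on $\gXga{M}$ is isometrically similar, via $\cU^\ga$, to its realization on $\ghu{M}$. Concretely, for every $\la$ in the resolvent set of $\cL$ on $\ghu{M}$ the transference inequality applied to $T = (\la-\cL)^{-1}$ shows that this resolvent restricts to a bounded operator on $\gXga{M}$ with norm no larger than on $\ghu{M}$, and that this restriction is precisely the resolvent of the part of $\cL$ in $\gXga{M}$. Consequently the spectrum of $\cL$ on $\gXga{M}$ is contained in $\OV S_{\pi/2}$ and the resolvent estimate \eqref{f: equiv ref sectoriality} transfers verbatim from Theorem~\ref{t: bound semigroup O}~\rmii, yielding sectoriality of angle $\pi/2$.

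The only genuinely delicate point, and the one I expect to be the main obstacle, is the bookkeeping around the unbounded operator in \rmii: one must verify that the commutation of $\cU^\ga$ with the resolvents of $\cL$ (valid a priori only on $\ld{M}$) passes to $\ghu{M}$ and $\gXga{M}$, and that the restricted resolvent on $\gXga{M}$ really coincides with the resolvent of the part of $\cL$ in $\gXga{M}$, i.e.\ that $\cU^\ga$ carries $\Dom(\cL)\cap\ghu{M}$ onto $\Dom(\cL)\cap\gXga{M}$. This is a routine but slightly technical check using the injectivity and boundedness of $\cU^\ga$ on $\ghu{M}$ from Proposition~\ref{p: consequences of sectoriality}; everything else is purely formal transference.
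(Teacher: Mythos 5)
Your proposal is correct and follows essentially the same route as the paper: the paper's proof of this corollary is precisely the observation that $\cU^\ga$ is an isometry of $\ghu{M}$ onto $\gXga{M}$ commuting with every operator in the functional calculus of $\cL$ (resolvents and heat semigroup included), so that Theorem~\ref{t: bound semigroup O} transfers immediately. Your additional care about the domain bookkeeping for the sectoriality in \rmii\ is a point the paper leaves implicit, but it is the same argument, only spelled out in more detail.
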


A few other relevant properties of the spaces $\gXga{M}$ are stated below. One shoud compare parts \rmiii\ and \rmiv\ of Proposition \ref{p: Uga bounded on Hone} with the discussion in Remark \ref{rem: imaginarypowers}. In light of the interpolation property in part \rmv, the $\mathfrak{X}^\gamma$-$\mathfrak{h}^1$ boundedness of the imaginary powers of $\cL$ expressed in part \rmiv\ can be seen as an endpoint counterpart to their $L^p$-boundedness for $p \in (1,\infty)$.

\begin{proposition} \label{p: Uga bounded on Hone}
The following hold:
\begin{enumerate}
\item[\itemno1]
if $\Re z>0$, then $\cU^z$ is bounded 
 on $\gXga{M}$ for every $\ga>0$;
\item[\itemno2]
$\{\gXga{M}: \ga>0\}$ is a decreasing family of Banach spaces;
\item[\itemno3]
if $\ga >0$ and $u$ is real, then $\cU^{iu}$ is bounded from $\gXga{M}$ to $\ghu{M}$;
\item[\itemno4]
if $\ga >0$ and $u$ is real, then $\cL^{iu}$ is bounded from $\gXga{M}$ to $\ghu{M}$;
\item[\itemno5]
$\big(\fX^{\ga}(M),L^2(M)\big)_{[\te]} = L^{p_{\te}}(M)$, whenever $\te\in (0,1)$ and $p_{\te}=2/{(1-\te)}$.
\end{enumerate}
\end{proposition}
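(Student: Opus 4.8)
The plan is to derive parts \rmi--\rmiv\ from a single transference principle and to obtain part \rmv\ by transporting the analogous interpolation identity for $\ghu{M}$ through $\cU^\ga$. The transference principle is the following observation: by Definition~\ref{def: gXga} the operator $\cU^\ga$ is a surjective isometry of $\ghu{M}$ onto $\gXga{M}$, and it commutes with every operator $\vp(\cL)$ in the functional calculus of $\cL$. Hence, for $f \in \gXga{M}$,
\[
\bignorm{\vp(\cL) f}{\fX^\ga} = \bignorm{\cU^{-\ga}\vp(\cL) f}{\frh^1} = \bignorm{\vp(\cL)\,\cU^{-\ga} f}{\frh^1},
\qquad
\bignorm{\vp(\cL) f}{\frh^1} = \bignorm{\big(\vp(\cL)\cU^\ga\big)\,\cU^{-\ga} f}{\frh^1}.
\]
Consequently $\vp(\cL)$ is bounded on $\gXga{M}$ if and only if it is bounded on $\ghu{M}$, and $\vp(\cL)$ is bounded from $\gXga{M}$ to $\ghu{M}$ if and only if $\vp(\cL)\cU^\ga$ is bounded on $\ghu{M}$; moreover, being the isometric image of the Banach space $\ghu{M}$, each $\gXga{M}$ is itself a Banach space.

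Part \rmi\ follows at once from the first equivalence and Proposition~\ref{p: consequences of sectoriality}~\rmii, which gives the boundedness of $\cU^z$ on $\ghu{M}$ for $\Re z>0$. For part \rmii, fix $0<\ga_1<\ga_2$; for $f \in \fX^{\ga_2}(M)$ we have $\cU^{-\ga_1} f = \cU^{\ga_2-\ga_1}\cU^{-\ga_2} f$, and since $\ga_2-\ga_1>0$ the operator $\cU^{\ga_2-\ga_1}$ is bounded on $\ghu{M}$ by Proposition~\ref{p: consequences of sectoriality}~\rmii. Hence $\cU^{-\ga_1} f \in \ghu{M}$ and $\bignorm{f}{\fX^{\ga_1}} \le C\bignorm{f}{\fX^{\ga_2}}$, so $\fX^{\ga_2}(M)$ is continuously contained in $\fX^{\ga_1}(M)$.

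For parts \rmiii\ and \rmiv\ I would use the second equivalence. Since $\Re(\ga+iu)=\ga>0$, the operator $\cU^{iu}\cU^\ga = \cU^{\ga+iu}$ is bounded on $\ghu{M}$ by Proposition~\ref{p: consequences of sectoriality}~\rmii, which gives \rmiii. For \rmiv, the spectral theorem on $\ld{M}$ yields the factorisation $\cL^{iu} = \cU^{iu}(\cI+\cL)^{iu}$, whence $\cL^{iu}\cU^\ga = \cU^{\ga+iu}(\cI+\cL)^{iu}$; this is bounded on $\ghu{M}$ because $(\cI+\cL)^{iu}$ is bounded on $\ghu{M}$ by \cite[Theorem~7]{MVo} and $\cU^{\ga+iu}$ is bounded on $\ghu{M}$ as just noted. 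The transference principle then delivers the boundedness of $\cL^{iu}$ from $\gXga{M}$ to $\ghu{M}$.

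Part \rmv\ is the main obstacle and requires transporting the identity $\big(\ghu{M},\ld{M}\big)_{[\te]} = L^{2/(2-\te)}(M)$ (see \cite[Theorem~5]{MVo}) through $\cU^\ga$. The crucial point is that $\cU^\ga$ is an \emph{isomorphism} of $\ld{M}$: since $b>0$, the $\ld{M}$ spectrum of $\cL$ lies in $[b,\infty)$, so the spectrum of $\cU$ lies in $[b/(1+b),1)$, bounded away from $0$, and the same holds for $\cU^\ga$. Thus $\cU^\ga$ is an isomorphism of the compatible couple $\big(\ghu{M},\ld{M}\big)$ onto $\big(\gXga{M},\ld{M}\big)$ (isometric on the first component by definition, isomorphic on the second), and by functoriality of the complex interpolation method it restricts to an isomorphism of $\big(\ghu{M},\ld{M}\big)_{[\te]}$ onto $\big(\gXga{M},\ld{M}\big)_{[\te]}$. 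To identify the latter as a Lebesgue space I would further use that $\cU^\ga$ is an isomorphism of each $L^q(M)$ with $1<q\le 2$ --- again a consequence of $b>0$, which forces the bottom of the $L^q$ spectrum of $\cL$ to be positive, hence $\cU$, and so $\cU^\ga$, to be invertible on $L^q(M)$. Since $2/(2-\te)\in(1,2)$ for $\te\in(0,1)$, the image $\cU^\ga\big[L^{2/(2-\te)}(M)\big]$ equals $L^{2/(2-\te)}(M)$ with equivalent norms; that is, $\big(\gXga{M},\ld{M}\big)_{[\te]}$ coincides with the very Lebesgue space $L^{p_\te}(M)$ obtained by interpolating $\ghu{M}$ with $\ld{M}$. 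The delicate step, and the one I expect to demand the most care, is precisely this identification of the image couple as a genuine Lebesgue scale: it is what makes the invertibility of $\cU^\ga$ on the interior spaces $L^q(M)$ indispensable, rather than the mere boundedness that sufficed for parts \rmi--\rmiv.
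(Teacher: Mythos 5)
Parts \rmi, \rmii, \rmiii\ and \rmv\ of your proposal are correct and follow essentially the paper's own route: conjugation by the isometry $\cU^\ga$ reduces each claim to a statement on $\ghu{M}$, parts \rmi--\rmiii\ then follow from Proposition~\ref{p: consequences of sectoriality}~\rmii, and part \rmv\ follows by transporting the interpolation identity for $\ghu{M}$ through $\cU^\ga$, using that $\cU^\ga$ is an isomorphism of $L^p(M)$ for $1<p\leq 2$; the paper's proof of \rmv\ is exactly this, which you merely spell out via functoriality of complex interpolation. (Incidentally, what you actually prove in \rmv\ is the identity with exponent $2/(2-\te)$, consistent with \cite[Theorem~5]{MVo} as quoted in Section~\ref{s: Background material}; the exponent $2/(1-\te)$ in the statement is evidently a misprint, so no harm done.)

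The genuine gap is in part \rmiv. Your factorisation $\cL^{iu}\,\cU^\ga = \cU^{\ga+iu}\,(\cI+\cL)^{iu}$ is correct on $\ld{M}$, but the claim that $(\cI+\cL)^{iu}$ is bounded on $\ghu{M}$ by \cite[Theorem~7]{MVo} is not justified in general. That theorem is a Mihlin--H\"ormander multiplier theorem requiring the symbol to be holomorphic and to satisfy Mihlin--H\"ormander estimates on the strip $\{z \in \BC : \mod{\Im z} < \be\}$. Writing $(\si+\cL)^{iu}$ as a function of $\cD = \sqrt{\cL-b}$, its symbol is $\la \mapsto (\si+b+\la^2)^{iu}$, which has branch points at $\la = \pm i\sqrt{\si+b}$; these lie inside the strip unless $\si \geq \be^2-b$, and this is why the paper invokes \cite[Theorem~7]{MVo} only under the proviso $\si > \be^2-b$. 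Since Brooks' inequality gives only $b \leq \be^2$, the quantity $\be^2-b$ can well exceed $1$, so your choice $\si=1$ is not covered and the cited theorem simply does not apply. The paper's proof avoids this by first using Proposition~\ref{p: useful property} (the spaces $\cU_\si^\ga\big[\ghu{M}\big]$ and their norms are independent of $\si>0$) to reduce \rmiv\ to the $\ghu{M}$-boundedness of $\cL^{iu}\,\cU_\si^\ga$ for \emph{some} $\si>0$, and then choosing $\si>\be^2-b$ in the factorisation $\cL^{iu}\,\cU_\si^\ga = \cU_\si^{\ga+iu}\,(\si+\cL)^{iu}$, where $\cU_\si^{\ga+iu}$ is bounded on $\ghu{M}$ (your argument for \rmiii\ works verbatim for every $\si>0$) and $(\si+\cL)^{iu}$ is bounded by \cite[Theorem~7]{MVo}. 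Your proof of \rmiv\ can be repaired in exactly this way; as written, it establishes \rmiv\ only under the unwarranted additional hypothesis $\be^2-b<1$.
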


\begin{proof}
Observe that $\cU^z$ is bounded on $\gXga{M}$ if and only if 
$\big[\cU^\ga\big]^{-1} \cU^z \cU^\ga=\cU^z$ is bounded on $\ghu{M}$. Thus \rmi\ is an immediate consequence of Proposition \ref{p: consequences of sectoriality}~\rmii.

Next we prove \rmii.  By Proposition \ref{p: consequences of sectoriality}~\rmii\ and the definition of $\gXga{M}$, 
it is clear that $\ghu{M} \supseteq \gXga{M}$ for any $\ga > 0$, with continuous inclusion. So, if $\ga_2>\ga_1>0$, then 
\[
\fX^{\ga_2}(M)   
=        \cU^{\ga_2} \big[\ghu{M}\big]  
=        \cU^{\ga_2-\ga_1} \cU^{\ga_1}\big[\ghu{M}\big] 
=        \cU^{\ga_2-\ga_1} \big[ \fX^{\ga_1}(M) \big]
\subseteq  \fX^{\ga_1}(M),
\]
the last containment above being a consequence of the boundedness of 
$\cU^{\ga_2-\ga_1}$ on $\fX^{\ga_1}(M)$ proved in \rmi.

Notice that \rmiii\ is equivalent to the boundedness of $\cU^{\ga+iu}$ on $\ghu{M}$, so \rmiii\ is another consequence of \ref{p: consequences of sectoriality}~\rmii.

To prove \rmiv, notice that, by Proposition \ref{p: useful property}, $\cL^{iu}$ is bounded from $\gXga{M}$ to $\ghu{M}$
if and only if $\cL^{iu} \,\cU_{\si}^{\ga} $ is bounded on $\ghu{M}$ for some $\si>0$.
Note that, by spectral theory,
$$
\cL^{iu} \,\cU_{\si}^{\ga}
=  \cU_{\si}^{\ga+iu} \, \big(\si+\cL\big)^{iu};
$$
the operator $\cU_{\si}^{\ga+iu}$ is bounded on $\ghu{M}$ by \rmiii, 
and $\big(\si+\cL\big)^{iu}$
is bounded on $\ghu{M}$ because its symbol satisfies a Mihlin--H\"ormander
condition of any order on the strip $\{ z \in \BC : |\Im z| < \be\}$ provided $\si>\beta^2-b$ \cite[Theorem~7]{MVo},  
and \rmiv\ is proved.    

Finally, as already mentioned, $\cU$ is an isomorphism of $L^p(M)$ for all $p \in (1,2]$, hence so is $\cU^\ga$, while $\fX^{\ga}(M) = \cU^\ga\big[\ghu{M}\big]$, for all $\ga > 0$. The interpolation property \rmv\ for $\fX^{\ga}(M)$ is therefore an immediate consequence of the corresponding property for $\ghu{M}$ proved in \cite[Theorem 5]{MVo}.
\end{proof}

We shall show that $\{\gXga{M}: \ga>0\}$ is actually a strictly decreasing family of Banach spaces. To do so, we need to discuss an atomic decomposition of the spaces $\gXga{M}$.

\subsection{Atomic decomposition when \texorpdfstring{$\ga$}{gamma} is an integer}
 
In the case where $\ga$ is an integer, $\gXga{M}$ was defined in \cite{Vo}, where 
also some of its properties were investigated.  In particular, it was shown there that these spaces
admit an atomic decomposition that we now describe, which is a variant of the atomic decomposition 
for the spaces $\Xh{M}$ proved in \cite{MMV2}.
An atom $A$ in $\fX^k(M)$ will be a standard atom in $\ghu{M}$
satisfying an additional infinite dimensional cancellation condition,
expressed as orthogonality of $A$
to the space of $k$-harmonic functions in a neighbourhood of the support of $A$.

\begin{definition}
Suppose that $k$ is a positive integer and that
$B$ is a ball in $M$. We say that a function $V$ in $\ld{M}$
is \emph{$k$-harmonic} on $\OV{B}$ if
$\cL^k V$ is zero (in the sense of distributions)
in a neighbourhood of $\OV{B}$. We shall denote by $P_B^k$ the space of
$k$-harmonic functions
on $\OV{B}$.
Moreover, let $Q_B^k$ denote the space of $k$-quasi-harmonic functions 
on $\OV{B}$, i.e., the subspace of $L^2(M)$ consisting of all 
the functions $V$ such that $\cL^k V$ is constant (in the sense of distributions) 
in a neighbourhood of $\overline{B}$. 
\end{definition}

\begin{remark} \label{rem: elliptic reg}
By elliptic regularity, $P_B^k$ coincides with the space 
of the functions $V$ in $L^2(M)$ that are smooth in a 
neighbourhood of $\OV{B}$ and such that $\cL^k V$ is zero therein.
A similar remark applies to $Q_B^k$.
\end{remark}

A direct consequence of the definition of $P_B^k$ and $Q_B^k$ is the following chain
of inclusions:
$$
P_B^1 \subseteq Q_B^1\subseteq P_B^2 \subseteq Q_B^2 \subseteq \cdots\,;
$$ 
correspondingly
\[
(P_B^1)^\perp
\supseteq (Q_B^1)^\perp\supseteq (P_B^2)^\perp
\supseteq (Q_B^2)^\perp\supseteq  \cdots \,,
\]
where $(P_B^k)^\perp$ and $(Q_B^k)^\perp$ denote the orthogonal complements of $P_B^k$ and $Q_B^k$ is $L^2(M)$.

For each ball $B$ in $M$, let us denote by $L^2(B)$ the space of all $\ld{M}$ functions supported in $\overline{B}$. The following result is the counterpart for the spaces $P_B^k$ of \cite[Proposition 3.3]{MMV2}, where the case of $Q_B^k$ is treated; the proof is analogous and is omitted.

\begin{proposition} \label{p: canc II}
Suppose that $k$ is a positive integer, and that $B$ is a ball in $M$.
\begin{enumerate}
\item[\itemno1]
$
(P_B^k)^{\perp}
= \set{F\in L^2(M):\cL^{-k} F\in\ld{B}}.
$
\item[\itemno2]
$\cL^{-k} \bigl( (P_B^k)^{\perp})$ is contained in $\ld{B} \cap
\Dom(\cL^{k})$. Furthermore, functions in $(P_B^k)^{\perp}$
have support contained in $\OV{B}$.
\item[\itemno3]
$\cU^{-k} \bigl( (P_B^k)^{\perp})$ is contained in $\ld{B}$.
\end{enumerate}
\end{proposition}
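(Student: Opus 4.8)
The plan is to establish \rmi\ first, since \rmii\ and \rmiii\ will follow from it together with the \emph{locality} of the differential operators $\cL^k$ and $(\cI+\cL)^k$. Throughout I use that $b>0$, so $\cL$ is a positive self-adjoint operator whose $\ld{M}$-spectrum is bounded away from $0$; hence $\cL^{-k}$ is bounded on $\ld{M}$ and maps it onto $\Dom(\cL^k)=\Ran(\cL^{-k})$, with $\cL^{k}\cL^{-k}=\cI$. The crux is the duality identity $\prodo{\cL^k G}{V}=\prodo{G}{\cL^k V}$, valid by self-adjointness whenever $G,V\in\Dom(\cL^k)$, combined with the fact that for $V\in P_B^k$ the function $\cL^k V$ vanishes near $\OV{B}$ by definition, while $V$ is smooth there by elliptic regularity (Remark~\ref{rem: elliptic reg}).

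For the inclusion $\supseteq$ in \rmi, I would suppose $G:=\cL^{-k}F\in\ld{B}$ and fix $V\in P_B^k$. Since $\cL^k$ is a local differential operator and $\supp G\subseteq\OV{B}$, the function $F=\cL^k G$ is itself supported in $\OV{B}$. Choosing $\chi\in C^\infty_c(M)$ equal to $1$ on a neighbourhood of $\OV{B}$ contained in the region where $V$ is smooth, I would write $\prodo{F}{V}=\prodo{F}{\chi V}=\prodo{\cL^k G}{\chi V}$; now $\chi V\in C^\infty_c(M)\subseteq\Dom(\cL^k)$, so self-adjointness gives $\prodo{\cL^k G}{\chi V}=\prodo{G}{\cL^k(\chi V)}$, and this vanishes because $\cL^k(\chi V)=\cL^k V=0$ on the neighbourhood of $\OV{B}$ where $\chi\equiv 1$, which contains $\supp G$. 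Hence $F\perp P_B^k$. For the reverse inclusion, given $F\in(P_B^k)^\perp$ and $\phi\in C^\infty_c(M\setminus\OV{B})$, I set $V:=\cL^{-k}\phi$; then $V\in\Dom(\cL^k)$ and $\cL^k V=\phi=0$ on the neighbourhood $M\setminus\supp\phi$ of $\OV{B}$, so $V\in P_B^k$. Self-adjointness then yields $\prodo{\cL^{-k}F}{\phi}=\prodo{\cL^{-k}F}{\cL^k V}=\prodo{F}{V}=0$, so $\cL^{-k}F$ annihilates every such $\phi$ and is therefore supported in $\OV{B}$.

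Part \rmii\ is then immediate: for $F\in(P_B^k)^\perp$, \rmi\ gives $\cL^{-k}F\in\ld{B}$, while $\cL^{-k}F\in\Dom(\cL^k)$ by the mapping property recalled above, so $\cL^{-k}\big((P_B^k)^\perp\big)\subseteq\ld{B}\cap\Dom(\cL^k)$; and writing $F=\cL^k(\cL^{-k}F)$ and invoking again the support-preserving property of $\cL^k$ shows $\supp F\subseteq\OV{B}$. For \rmiii\ I would use the functional-calculus identity $\cU^{-k}=(\cI+\cL)^k\,\cL^{-k}$, which follows from $\cU^{-1}=(\cI+\cL)\cL^{-1}$. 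Given $F\in(P_B^k)^\perp$, put $G:=\cL^{-k}F\in\ld{B}$; then $\cU^{-k}F=(\cI+\cL)^k G$, and since $(\cI+\cL)^k$ is a local differential operator of order $2k$, its output is supported in $\supp G\subseteq\OV{B}$, whence $\cU^{-k}F\in\ld{B}$.

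The hard part will be the inclusion $\supseteq$ of \rmi: because an element $V$ of $P_B^k$ is only required to lie in $\ld{M}$, not in the global domain $\Dom(\cL^k)$, one cannot transfer $\cL^k$ across the inner product directly by self-adjointness. The localisation by a cutoff $\chi$ that is $1$ near $\OV{B}$---legitimate precisely because $V$ is smooth there by elliptic regularity and because $F=\cL^k\cL^{-k}F$ is itself supported in $\OV{B}$ by locality---is what repairs this, and it is the only genuinely delicate point; the remainder is bookkeeping with the positivity and self-adjointness of $\cL$ and the fact that the distributional action of $\cL^k$ on domain elements agrees with the operator action.
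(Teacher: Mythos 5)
Your proof is correct and is essentially the argument the paper intends: the paper itself omits the proof, referring to the analogous statement for $Q_B^k$ in \cite[Proposition~3.3]{MMV2}, and that argument runs on precisely the ingredients you use --- self-adjointness of $\cL$ and locality of the differential operators $\cL^k$ and $(\cI+\cL)^k$, elliptic regularity to legitimise the cutoff, the spectral gap $b>0$ making $\cL^{-k}$ bounded with range $\Dom(\cL^k)$, and the identity $\cU^{-k}=(\cI+\cL)^k\cL^{-k}$ for part \rmiii. One cosmetic correction: the cutoff $\chi$ should have its \emph{support} (not merely its level set $\{\chi=1\}$) contained in the neighbourhood of $\OV{B}$ on which $V$ is smooth and $\cL^k V=0$, so that $\chi V\in C^\infty_c(M)\subseteq\Dom(\cL^k)$; with this trivial adjustment the argument is complete.
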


\begin{definition} \label{def: Xk atoms}
Suppose that $k$ is a positive integer.
A \emph{standard $\fX^k$-atom} associated to
the ball $B$ of radius $\leq 1$ is a function $A$ in 
$\ld{M}$, supported in $B$, such that
\begin{enumerate}
\item[\itemno1]
$A$ is in $(Q_B^k)^{\perp}$;
\item[\itemno2]
$\ds\norm{A}{L^2}\leq \mu(B)^{-1/2}$.
\end{enumerate}
A \emph{global $\fX^k$-atom} associated to
the ball $B$ of radius $1$ is a function $A$ in $\ld{M}$, supported in $B$, 
such that 
\begin{enumerate}
\item[\itemno1]
$A$ is in $(P_B^k)^{\perp}$;
\item[\itemno2]
$\ds\norm{A}{L^2}\leq \mu(B)^{-1/2}$.
\end{enumerate}
An $\fX^k$-\emph{atom} is either a standard $\fX^k$-atom or a 
global $\fX^k$-\emph{atom}.
\end{definition}

\begin{remark}
An $\fX^k$-atom (standard or global) is also a 
standard $\mathfrak{h}^1$-atom: indeed, in either case the cancellation condition \rmi\ implies that the integral 
of $A$ vanishes, since $\chi_{2B}$ is in $P_B^k$ and in $Q_B^k$.
\end{remark}

\begin{remark}\label{rem: 12h atomi}
The set of $\fX^k$-atoms is a bounded subset of $\fX^k(M)$.
Indeed, in the case of a global $\fX^k$-atom $A$, from Proposition \ref{p: canc II} and \eqref{f: normah1funzioneL2} it follows immediately that
\[
\| \cU^{-k} A \|_{\mathfrak{h}^1} \leq C \bigopnorm{ \cU^{-k} }{L^2} \,;
\]
the fact that the same estimate also holds for a standard $\fX^k$-atom $A$ can be shown as in \cite[Remark 3.5]{MMV2}.
\end{remark}

\begin{definition}
Suppose that $k$ is a positive integer. The space $\Xxat{M}$ is the space of all functions $F$ in 
$\frh^1(M)$ that admit a decomposition of the form $F= \sum_j \la_j\, A_j$, where $\{\la_j\}$ is
a sequence in $\ell^1$ and $\{A_j\}$ is a sequence
of $\fX^k$-atoms. 
We endow
$\Xxat{M}$ with the norm
$$
\norm{F}{\Xnat}
= \inf\, \Bigl\{\sum_{j} \mod{\la_j}: F = \sum_{j} \la_j \, A_j,\quad
\hbox{$A_j$ $\fX^k$-atoms}\Bigr\}.
$$
\end{definition}

From Remark \ref{rem: 12h atomi} it is clear that $\Xxat{M} \subseteq \fX^k(M)$, with continuous embedding.
One can show that equality holds under a suitable geometric hypothesis on $M$.

\begin{definition}
We say that $M$ has $C^\ell$ bounded geometry if the injectivity radius is positive and the following hold:
\begin{enumerate}
\item[(a)] if $\ell=0$, then the Ricci tensor is bounded from below;
\item[(b)] if $\ell$ is positive, then the covariant derivatives 
$\nabla^j \hbox{Ric}$ of the Ricci tensor are uniformly bounded on $M$ for all $j \in \{0,\dots,\ell\}$.
\end{enumerate}
\end{definition}

The aforementioned atomic decomposition of $\gXh{M}$ is the content of the following theorem. We omit the proof, which follows the lines of the proof of \cite[Theorem~4.3]{MMV2}. 
\begin{theorem} \label{t: atomic dec}
Suppose that $k$ is a positive integer and that $M$ has $C^{2k-2}$ bounded geometry.
Then $\fX^k(M)$ and $\Xxat{M}$ agree as vector spaces and there exists a constant $C$ such that
\begin{equation} \label{f: atomic dec}
C^{-1} \, \norm{F}{\Xnat}
\leq \norm{F}{\fX^k}
\leq C \, \norm{F}{\Xnat}
\quant F \in \fX^k(M).
\end{equation}
\end{theorem}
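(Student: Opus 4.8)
The goal is to establish the two inclusions underlying \eqref{f: atomic dec}. One of them is essentially already in hand: by Remark~\ref{rem: 12h atomi} the $\fX^k$-atoms form a bounded subset of $\gXh{M}$, so any atomic sum converges in $\gXh{M}$ with $\norm{\sum_j\la_j A_j}{\fX^k}\le C\sum_j\mod{\la_j}$, whence $\Xxat{M}\subseteq\gXh{M}$ and $\norm{F}{\fX^k}\le C\norm{F}{\Xnat}$. The substance of the theorem is the opposite inclusion together with the bound $\norm{F}{\Xnat}\le C\norm{F}{\fX^k}$, which I would reduce to a single uniform statement about atoms.

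Fix $\si$ large (to be specified) and recall from Proposition~\ref{p: useful property} that $\gXh{M}=\cU_\si^k\big[\ghu{M}\big]$ with equivalent norms. Given $F\in\gXh{M}$, put $f:=\cU_\si^{-k}F\in\ghu{M}$, so that $\norm{f}{\frh^1}\le C\norm{F}{\fX^k}$ by \eqref{f: eq_U_Usi}, and decompose $f=\sum_j\la_j a_j$ into $\ghu{M}$-atoms with $\sum_j\mod{\la_j}\le 2\norm{f}{\frh^1}$. Since $\cU_\si^k$ is bounded on $\ghu{M}$, the series $F=\cU_\si^k f=\sum_j\la_j\,\cU_\si^k a_j$ converges in $\ghu{M}$. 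Thus everything reduces to the following \emph{Claim}: there is a constant $C$, independent of the atom, such that each $\ghu{M}$-atom $a$ admits a decomposition $\cU_\si^k a=\sum_i c_i\,A_i$ into $\fX^k$-atoms with $\sum_i\mod{c_i}\le C$. Granting the Claim, inserting these decompositions produces an $\fX^k$-atomic decomposition of $F$ with total coefficient $\sum_{i,j}\mod{\la_j c_i}\le C\sum_j\mod{\la_j}\le C\norm{F}{\fX^k}$, which is exactly the required bound.

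For the Claim I would write $\cU_\si^k a=\cL^k G$ with $G:=(\si+\cL)^{-k}a$, a globally defined smooth function whose kernel, thanks to the spectral gap, decays exponentially at rate $\sqrt{\si+b}$. Fixing a $1$-discretisation $\fM$ of $M$ with its finite-overlap cover by unit balls $\{B_z\}_{z\in\fM}$ and a subordinate partition of unity $\{\psi_z\}$, let $\eta:=\sum_{z:\,d(z,\supp a)\le C_0}\psi_z$ be a cutoff equal to $1$ near the supporting ball $B$, and split
\[
\cU_\si^k a = \cL^k(\eta G) + \sum_{z:\,d(z,\supp a)>C_0} \cL^k(\psi_z G).
\]
The first term is handled by a scale-$\le1$ argument: since bounded geometry makes balls of radius $\le1$ uniformly comparable to Euclidean ones and $\cU_\si^k$ has order $0$, one produces boundedly many $\fX^k$-atoms on dilates of $B$ with coefficients $O(1)$, as in \cite[Remark~3.5]{MMV2}. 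For each far $z$ set $A_z:=\cL^k(\psi_z G)$; as $\cL^k$ is a differential operator of order $2k$, $A_z$ is supported in $\OV{B_z}$ and $\cL^{-k}A_z=\psi_z G\in\ld{B_z}$, so by Proposition~\ref{p: canc II}~\rmi\ one has $A_z\in(P_{B_z}^k)^\perp$; hence $c_z^{-1}A_z$ is a global $\fX^k$-atom for $c_z:=\mu(B_z)^{1/2}\norm{A_z}{\ld{M}}$.

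The crux is the uniform bound $\sum_z c_z\le C$, and this is where both hypotheses enter. Interior elliptic regularity gives $\norm{A_z}{\ld{M}}\le C\norm{G}{H^{2k}(B_z)}\le C\norm{G}{\ld{2B_z}}$ for $z$ away from $\supp a$ (where $(\si+\cL)^k G=a$ vanishes), the constants being uniform over $M$ precisely because the metric has $C^{2k-2}$ bounded geometry, which controls the coefficients of $\cL^k$ and the commutators $[\cL^k,\psi_z]$. Since unit balls have volume $\approx1$ by \eqref{f: lower bound balls}, the kernel decay yields $c_z\le C\,\e^{-\sqrt{\si+b}\,d(z,\supp a)}$, and summing against the volume growth \eqref{f: volume growth} — the number of net points in a shell of radius $r$ being $\approx\e^{2\be r}$ — gives a finite geometric series \emph{exactly when} $\sqrt{\si+b}>2\be$. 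This is the main obstacle: the exponential decay of the resolvent kernels must beat the exponential volume growth, which fails for $\cU=\cU_1$ but is forced by choosing $\si>4\be^2-b$, the freedom to replace $\cU$ by such a $\cU_\si$ without changing $\gXh{M}$ being furnished by Proposition~\ref{p: useful property}. Assembling the local and tail atoms proves the Claim, hence $\gXh{M}\subseteq\Xxat{M}$ with $\norm{F}{\Xnat}\le C\norm{F}{\fX^k}$; together with the easy inclusion this gives \eqref{f: atomic dec}.
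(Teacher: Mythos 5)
Your global architecture is sound and is, in outline, the same as that of the argument the paper itself defers to (the paper omits the proof, citing \cite[Theorem~4.3]{MMV2}): reduce via Proposition~\ref{p: useful property} to showing that $\cU_\si^k a$ admits an $\fX^k$-atomic decomposition with uniformly bounded coefficient mass for every $\ghu{M}$-atom $a$, and win the competition between the exponential decay of the kernel of $(\si+\cL)^{-k}$ and the exponential volume growth by taking $\si$ large. Your treatment of the far pieces is correct: $\cL^k(\psi_z G)$ is supported in the unit ball $B_z$, Proposition~\ref{p: canc II}~\rmi\ places it in $(P_{B_z}^k)^\perp$, so it is a multiple of a \emph{global} $\fX^k$-atom, and uniform interior elliptic estimates (one place where $C^{2k-2}$ bounded geometry enters) together with the kernel decay and the count $\approx\e^{2\be r}$ of net points at distance $r$ give a convergent coefficient series once $\sqrt{\si+b}>2\be$; your condition $\si>4\be^2-b$ is cruder than what sharper bookkeeping yields, but harmless since $\si$ is free.

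The genuine gap is the near part, and it is not a detail: it is where the remaining hypotheses must act. If $a$ is a \emph{standard} $\ghu{M}$-atom on a ball $B$ with $r_B$ small, the near piece contains $\eta\,\cU_\si^k a$, whose $L^2$ norm can be of order $\norm{a}{L^2}\approx\mu(B)^{-1/2}$ (such an atom is concentrated at frequencies where the symbol of $\cU_\si^k$ is $\approx 1$), while its support is at \emph{unit} scale, because your cutoff $\eta$ is a sum of the $\psi_z$ and is not adapted to $r_B$. A function of $L^2$ norm $\mu(B)^{-1/2}$ cannot be written as boundedly many unit-scale global atoms with $O(1)$ coefficients, since global $\fX^k$-atoms have $L^2$ norm capped at $\mu(B_z)^{-1/2}\approx 1$. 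The near part must therefore be resolved into \emph{standard} $\fX^k$-atoms at scales comparable to $r_B$ (and intermediate dyadic scales up to $1$), and these require, by Definition~\ref{def: Xk atoms}, orthogonality to $Q_{B'}^k$, not merely to $P_{B'}^k$: besides $\cL^{-k}A\in\ld{B'}$ one needs $\int_M \cL^{-k}A\wrt\mu=0$. No piece of the form $\cL^k(\psi G)$ produced by a partition of unity has this extra cancellation, since $\int \psi G\wrt\mu\neq 0$ in general; restoring it forces a redistribution of the masses $\int\psi_z G\wrt\mu$ along chains of balls back towards $B$, and that scheme closes only because $\int_M G\wrt\mu=\si^{-k}\int_M a\wrt\mu=0$ for standard atoms --- that is, the vanishing moment of $a$ must be used, and your argument never uses it (for \emph{global} $\ghu{M}$-atoms $a$ your construction is fine, since then $\mu(B)\approx 1$). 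Finally, the reference you lean on for this step, \cite[Remark~3.5]{MMV2}, proves the opposite, easy inclusion --- it is the source of Remark~\ref{rem: 12h atomi} --- so it cannot supply the missing decomposition. This near-field analysis (dyadic localization around $B$, the moment condition of $a$ for summability across scales, telescoping to restore the quasi-harmonic cancellation) is precisely the technical core of the proof in \cite{MMV2} that the theorem invokes.
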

Notice that when $k=1$ the geometric hypothesis of Theorem \ref{t: atomic dec} 
is already contained in our geometric assumptions on the manifold $M$ (see Section \ref{s: Background material}), 
so the atomic characterization of $\mathfrak X^1(M)$ holds without additional assumptions. 

As a consequence of the atomic characterization of the space $\mathfrak X^1(M)$ 
we can prove a result involving all $\fX^{\gamma}(M)$ spaces. 
Denote by $\cH^\infty(M)$ the space of all bounded harmonic functions on $M$, 
thought of as a subspace of $\gbmo{M}$, and consider the annihilator of $\cH^\infty(M)$ in $\ghu{M}$, defined~by 
$$
\cH^\infty(M)^\perp
:= \big\{ f \in \ghu{M}: \prodo{f}{H} = 0 \,\, \hbox{for all $H$ in $\cH^\infty(M)$} \big\},
$$
where $\prodo{\cdot}{\cdot}$ denotes the duality between $\ghu{M}$ and $\gbmo{M}$ (see \eqref{f: dualita}).

\begin{proposition}  
The following hold:
\begin{enumerate}
\item[\itemno1]
for every $\gamma>0$ the space $\gXga{M}$ is contained in $\cH^\infty(M)^\perp$\,;
\item[\itemno2]
$\{\gXga{M}: \ga>0\}$ is a strictly decreasing family of Banach spaces.
\end{enumerate}
\end{proposition}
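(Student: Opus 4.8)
The plan is to prove \rmi\ first and to deduce \rmii\ from it. Throughout, $H$ denotes an arbitrary function in $\cH^\infty(M)$, and I record at the outset that $\cH^\infty(M)^\perp$ is a \emph{closed} subspace of $\ghu{M}$, being the intersection of the kernels of the continuous functionals $f \mapsto \prodo{f}{H}$, $H \in \cH^\infty(M)$.

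For \rmi\ the decisive case is $\ga = 1$, which I would settle by means of the atomic characterisation of $\fX^1(M)$ furnished by Theorem~\ref{t: atomic dec} with $k=1$ (valid under the standing assumptions on $M$). Since $\prodo{\cdot}{H}$ is continuous on $\ghu{M}$ and $\fX^1(M) \hookrightarrow \ghu{M}$ continuously, and since every $F \in \fX^1(M)$ is an $\ell^1$-combination of $\fX^1$-atoms converging in $\fX^1(M)$ (hence in $\ghu{M}$), it suffices to check that $\prodo{A}{H}=0$ for each $\fX^1$-atom $A$. Fixing such an $A$, supported in a ball $B$, I would pick $\phi \in C^\infty_c(M)$ equal to $1$ on a neighbourhood of $\OV{B}$ and set $V := \phi H$; then $V \in \ld{M}$, $V$ coincides with $H$ near $\OV{B}$, and $\cL V = \cL H = 0$ there, so $V$ is $1$-harmonic on $\OV{B}$, i.e.\ $V \in P_B^1$. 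Because $A$ is supported in $\OV{B}$ and, by Definition~\ref{def: Xk atoms}, $A \in (Q_B^1)^{\perp} \subseteq (P_B^1)^\perp$ (standard atom) or $A \in (P_B^1)^\perp$ (global atom), formula~\eqref{f: dualita} yields $\prodo{A}{H} = \int_M A\,V \wrt\mu = 0$. This gives $\fX^1(M) \subseteq \cH^\infty(M)^\perp$.

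To pass to general $\ga>0$ I would split into two cases. For $\ga \geq 1$ the inclusion $\fX^{\ga}(M) \subseteq \fX^1(M)$ from Proposition~\ref{p: Uga bounded on Hone}~\rmii\ immediately gives $\fX^{\ga}(M) \subseteq \cH^\infty(M)^\perp$. For $0 < \ga < 1$ I would invoke the integral representation of the fractional power of the bounded injective sectorial operator $\cU$ on $\ghu{M}$ (cf.\ \cite{Haa}),
\[
\cU^\ga f = \frac{\sin \pi\ga}{\pi} \int_0^\infty t^{\ga-1}\, \cU\,(t+\cU)^{-1} f \wrt t, \qquad f \in \ghu{M},
\]
the integral converging in $\ghu{M}$ because $\bigopnorm{\cU\,(t+\cU)^{-1}}{\ghu{M}} \leq C\min\{1,t^{-1}\}$ by sectoriality, so that $t^{\ga-1}$ is integrated against an $L^1$ weight for $0<\ga<1$. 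For each $t>0$ the integrand equals $\cU\big[(t+\cU)^{-1}f\big]$, hence lies in $\cU\big[\ghu{M}\big] = \fX^1(M) \subseteq \cH^\infty(M)^\perp$; since the latter is a closed subspace and the integral converges in norm, $\cU^\ga f \in \cH^\infty(M)^\perp$. Thus $\fX^{\ga}(M) \subseteq \cH^\infty(M)^\perp$ for all $\ga>0$. I expect this fractional step --- justifying the subordination formula on $\ghu{M}$ and the closedness argument --- to be the main technical point of \rmi.

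Finally, for \rmii\ I would establish strictness by contradiction, the decreasing property being already known from Proposition~\ref{p: Uga bounded on Hone}~\rmii. Suppose $\fX^{\ga_1}(M) = \fX^{\ga_2}(M)$ as sets for some $\ga_2 > \ga_1 > 0$, and put $\de := \ga_2-\ga_1>0$. Writing an arbitrary $F \in \fX^{\ga_1}(M)$ as $F = \cU^{\ga_1} h$ with $h \in \ghu{M}$, additivity and injectivity of the fractional powers give $\cU^{-\ga_1}\cU^\de F = \cU^\de h$; as $F$ ranges over $\fX^{\ga_1}(M)$, the left-hand side ranges over $\cU^{-\ga_1}\big[\cU^\de[\fX^{\ga_1}(M)]\big] = \cU^{-\ga_1}\big[\fX^{\ga_2}(M)\big] = \cU^{-\ga_1}\big[\fX^{\ga_1}(M)\big] = \ghu{M}$, while the right-hand side ranges over $\cU^\de\big[\ghu{M}\big] = \fX^\de(M)$. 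Hence $\fX^\de(M) = \ghu{M}$. But \rmi\ gives $\fX^\de(M) \subseteq \cH^\infty(M)^\perp$, and $\cH^\infty(M)^\perp$ is a \emph{proper} subspace of $\ghu{M}$: the constant $\One$ is a nonzero element of $\cH^\infty(M)$, whereas any global atom $a$ with $\int_M a \wrt\mu \neq 0$ satisfies $\prodo{a}{\One} = \int_M a \wrt\mu \neq 0$, so $a \notin \cH^\infty(M)^\perp$. This contradiction shows $\fX^{\ga_2}(M) \subsetneq \fX^{\ga_1}(M)$, proving \rmii.
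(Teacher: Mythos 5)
Your proof is correct, and it diverges from the paper's at two points worth noting. For part \rmi, the paper handles all $\ga>0$ in one stroke: after showing $\fX^1(M)\subseteq \cH^\infty(M)^\perp$ via the atomic decomposition (it merely \emph{asserts} that $\fX^1$-atoms are annihilated by bounded harmonic functions, whereas you justify this with the cutoff function $V=\phi H\in P_B^1$ --- a genuine, if small, added value), it invokes the general fact \cite[Proposition~3.1.1~(d)]{Haa} that $\OV{\Ran(\cU^\ga)} = \OV{\Ran(\cU)}$ for a sectorial operator, so that $\gXga{M}\subseteq\OV{\gXu{M}}^{\,\ghu{M}}\subseteq\cH^\infty(M)^\perp$ with no case distinction. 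Your split into $\ga\geq 1$ (monotonicity) and $0<\ga<1$ (Balakrishnan subordination plus closedness of $\cH^\infty(M)^\perp$) is a self-contained reproof of exactly the special case of Haase's range-closure result that is needed; it costs you the verification of the integral representation and its convergence, but removes a black-box citation. For part \rmii, the paper's contradiction runs through operator theory: from $\fX^{\ga_2}(M)=\fX^{\ga_1}(M)$ it deduces that $\cU^{\ga_2-\ga_1}$ is surjective, hence an isomorphism of $\ghu{M}$, so $0$ lies in its resolvent set, contradicting $0\in\sigma_{\mathfrak h^1}(\cU)$ via the spectral mapping theorem \cite[Proposition~3.1.1~(j)]{Haa}. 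Your argument is more elementary: you apply the bijection $\cU^{-\ga_1}:\fX^{\ga_1}(M)\to\ghu{M}$ directly to the hypothesized equality to get $\fX^{\ga_2-\ga_1}(M)=\ghu{M}$, and then contradict \rmi\ together with the properness of $\cH^\infty(M)^\perp$, witnessed concretely by pairing a global atom of nonzero integral against the constant function $\One$. Note that the paper's proof implicitly uses the same properness fact ($\cU$ is not surjective because $\fX^1(M)$ annihilates constants), so your route reaches the same contradiction while avoiding the spectral mapping theorem altogether.
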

\begin{proof}
We first show that $\gXu{M} \subseteq \cH^\infty(M)^\perp$.  By Theorem~\ref{t: atomic dec}, 
every function $f$ in $\gXu{M}$
admits a representation of the form  $\sum_j \, c_j\, A_j$, where the $A_j$'s are
$\mathfrak X^1$-atoms.  Each of these is annihilated by all bounded harmonic functions, so that, by \eqref{f: dualita},
$$
\begin{aligned}
\prodo{f}{H}
 = \sum_j \, c_j \,\int_M A_j \,H \wrt \mu 
= 0
\quant H \in \cH^\infty(M).
\end{aligned}
$$ 
Taking closures in $\ghu{M}$, we then obtain that $\OV{\gXu{M}}^{\,\ghu{M}} \subseteq \cH^\infty(M)^\perp$.  
Now, the sectoriality of $\cU$ implies \cite[Proposition~3.1.1~(d)]{Haa} that 
$\OV{\gXu{M}}^{\,\ghu{M}} = \OV{\gXga{M}}^{\,\ghu{M}}$, and we can conclude that 
$\OV{\gXga{M}}^{\,\ghu{M}} \subseteq \cH^\infty(M)^\perp$. This proves \itemno1.

We now prove \rmii. Let $\ga_2>\ga_1>0$. 
In view of Proposition \ref{p: Uga bounded on Hone} it remains to show that the containment 
$\fX^{\ga_2}(M) \subseteq \fX^{\ga_1}(M)$ is proper. 
We argue by contradiction.  Suppose that $\fX^{\ga_2}(M) = \fX^{\ga_1}(M)$.  Since 
$\cU$ is an injective sectorial operator on $\ghu{M}$ 
(see Proposition~\ref{p: consequences of sectoriality}),  
$\big(\cU^{\ga_1}\big)^{-1} = \big(\cU^{-1}\big)^{\ga_1}= \cU^{-\ga_1}$ 
\cite[Propositions~3.1.1~(e) and 3.2.1~(a)]{Haa}. 
Furthermore, $\cU^{-\ga_1}\cU^{\ga_2} \subseteq \cU^{\ga_2-\ga_1}$ \cite[Proposition~3.2.1~(b)]{Haa}. 
Since $\fX^{\ga_2}(M) = \fX^{\ga_1}(M)$, the operator $\cU^{-\ga_1}\cU^{\ga_2}$ is surjective
on $\ghu{M}$, so $\cU^{\ga_2-\ga_1}$ is also surjective on $\ghu{M}$.  
By \cite[Proposition 3.1.1 (d)]{Haa}, $\cU^{\ga_2-\ga_1}$ is bounded and injective on $\ghu{M}$, 
whence $\cU^{\ga_2-\ga_1}$ is an isomorphism of $\ghu{M}$.  
Therefore $0$ is in the resolvent set of $\cU^{\ga_2-\ga_1}$.
However, $0$ is in the $\mathfrak h^1$-spectrum of $\cU$ ($\cU$ cannot be surjective, 
because, by \rmi, $\cU \mathfrak h^1(M)=\mathfrak X^1(M)$ is 
contained in the annihilator of constant functions), and this contradicts 
the spectral mapping theorem \cite[Proposition~3.1.1~(j)]{Haa}.
\end{proof}

\subsection{Lack of atomic decomposition when \texorpdfstring{$\ga$}{gamma} is not an integer}

In this subsection we restrict our analysis to symmetric spaces of the noncompact type $\BX$ of real rank one, and
show that, if $\ga$ is not a positive integer, then $\gXga{\BX}$ does not admit an
atomic decomposition. A similar result holds for the analogue of $\gXga{\BX}$
on homogeneous trees (see \cite[Theorem~5.8]{CM} for details). 
For the notation and the main properties of noncompact symmetric spaces and 
for spherical analysis thereon we refer the reader to \cite{H1, H}. 

We recall here that $\BX$ is a quotient $G/K$, where $G$ is a noncompact semisimple Lie group 
of finite centre and real rank one and $K$ is a maximal compact subgroup of $ G$.  
Given a Cartan decomposition $\frg=\frp\oplus \frk$ of the 
Lie algebra of $G$, we denote by $\fra$ a maximal abelian subspace of 
$\frp$ and by $\fra^*_{\BC}$ the complexification of its dual.
The real rank one assumption means that $\dim \fra = 1$ and implies that the Weyl group is $\{1,-1\}$.
If $M$ denotes the centralizer of $\fra$ in $K$, we denote by $B$ the quotient $K/M$. 
For every compactly supported function $f$ on $\BX$, its 
Helgason--Fourier transform $\wt f$ is a function on $\fra^*_{\BC}\times B$ 
defined as in \cite[p.\ 223]{H}. The Paley--Wiener theorem on noncompact symmetric spaces will be a 
key ingredient in the proof of the following result.

\begin{theorem}\label{t: no atomic}
Suppose that $\BX$ is a symmetric space of the noncompact type and real rank one, 
that $k$ is a positive integer, and that $\ga$ is in $(k-1,k)$.  
\begin{itemize}
\item[\itemno1] If $f$ is a function in $\gXga{\BX}$ with compact support, then $f$ belongs to $\gXh{\BX}$.
\item[\itemno2] If $f$ is a function in $\ld{\BX}\cap \gXga{\BX}$ with compact support contained in the ball ${B_1(o)}$, then $f$ is a multiple of an $\fX^{k}$-atom.
\item[\itemno3] $\gXh{\BX}$ is not dense in $\gXga{\BX}$.
\end{itemize}
\end{theorem}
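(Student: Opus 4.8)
The plan is to reduce the whole theorem to a single spectral computation near the bottom $b=|\rho|^2$ of the $\ld{\BX}$-spectrum of $\cL$, carried out through the Helgason--Fourier transform. By Proposition~\ref{p: useful property} a function $f$ lies in $\gXga{\BX}$ exactly when $f$ and $\cL^{-\ga}f$ both lie in $\ghu{\BX}$; in particular $\cL^{-\ga}f\in\lu{\BX}$. On a rank one symmetric space $\cL$ acts on the Helgason--Fourier side as multiplication by $|\la|^2+|\rho|^2$, so that $\wt{\cL^{-\ga}f}(\la,b)=(|\la|^2+|\rho|^2)^{-\ga}\,\wt f(\la,b)$, and when $f$ is compactly supported the Paley--Wiener theorem on $\BX$ \cite{H} makes $\la\mapsto\wt f(\la,b)$ an entire function of exponential type. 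The factor $(|\la|^2+|\rho|^2)^{-\ga}$ is holomorphic except for branch singularities of order $\ga$ at the two points $\la=\pm i\rho$ where $|\la|^2+|\rho|^2$ vanishes, and the whole argument turns on how much these must be cancelled by $\wt f(\cdot,b)$.

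I would establish \rmii\ first, since it carries the main obstacle. Let $f\in\ld{\BX}\cap\gXga{\BX}$ be supported in $B_1(o)$, so $\cL^{-\ga}f\in\lu{\BX}$. The $\lu{}$ condition is really a statement about the decay of $\cL^{-\ga}f$ at infinity, which I would read off from the inversion formula: deforming the inversion contour upward past $\la=i\rho$ and using the standard asymptotics of the spherical functions and of the Plancherel density, the leading contribution of $(|\la|^2+|\rho|^2)^{-\ga}\wt f(\la,b)$ behaves at distance $r$ from $o$ like $r^{\ga-m-1}\e^{-2|\rho|r}$, where $m$ is the order of vanishing of $\wt f(\cdot,b)$ at $i\rho$. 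Since the Riemannian volume grows like $\e^{2|\rho|r}$, integrability forces $m>\ga$; as $\ga\in(k-1,k)$ is \emph{not} an integer while $m$ is, this means $m\ge k$. This is where non-integrality of $\ga$ is decisive: it rounds the forced cancellation up to the next integer. By the Weyl symmetry the same holds at $-i\rho$, so $(|\la|^2+|\rho|^2)^{-k}\wt f(\la,b)$ is again entire of exponential type $\le 1$ and still satisfies the Paley--Wiener symmetry condition (the multiplier being even in $\la$); hence $\cL^{-k}f$ is supported in $\OV{B_1(o)}$. By Proposition~\ref{p: canc II}~\rmi\ this says precisely $f\in(P_{B_1(o)}^k)^\perp$, and rescaling by $\norm{f}{L^2}$ exhibits $f$ as a multiple of a global $\fX^k$-atom.

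For \rmi\ the same necessity computation applies to any compactly supported $f\in\gXga{\BX}\subseteq\lu{\BX}$, say with support in $\OV{B_R}$: it again forces $\wt f(\cdot,b)$ to vanish to order $\ge k$ at $\pm i\rho$, whence $\cL^{-k}f$ has compact support. Together with $f\in\ghu{\BX}$ this should yield $\cL^{-k}f\in\ghu{\BX}$ and therefore $f\in\gXh{\BX}$, via the characterisation in Proposition~\ref{p: useful property}~\rmiii. The delicate point is to pass from compact support of $\cL^{-k}f$ to membership in $\ghu{\BX}$ without assuming $f\in\ld{\BX}$; I would handle this by covering $\OV{B_R}$ with finitely many balls of radius $1$ and reducing to \rmii, the regularising effect of $\cL^{-k}$ providing the local $L^2$ control needed to invoke \eqref{f: normah1funzioneL2}.

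Finally, \rmiii\ is soft and is where I expect the least difficulty. By Definition~\ref{def: gXga} the operator $\cU^\ga$ is an isometric isomorphism of $\ghu{\BX}$ onto $\gXga{\BX}$, and by additivity of its fractional powers it carries $\fX^{k-\ga}(\BX)=\cU^{k-\ga}[\ghu{\BX}]$ onto $\cU^k[\ghu{\BX}]=\gXh{\BX}$; hence $\gXh{\BX}$ is dense in $\gXga{\BX}$ if and only if $\fX^{k-\ga}(\BX)$ is dense in $\ghu{\BX}$. But $k-\ga>0$, so $\fX^{k-\ga}(\BX)\subseteq\cH^\infty(\BX)^\perp$ by the previous proposition (which gives $\fX^{\delta}(M)\subseteq\cH^\infty(M)^\perp$ for every $\delta>0$), and $\cH^\infty(\BX)^\perp$ is a \emph{proper} closed subspace of $\ghu{\BX}$: it is closed, being an intersection of kernels of $\gbmo{\BX}$-functionals, and proper because a global atom supported in a ball of radius $1$ has in general nonzero integral, so it does not annihilate the constant harmonic function $1\in\cH^\infty(\BX)$. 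Therefore $\fX^{k-\ga}(\BX)$ is not dense in $\ghu{\BX}$, and $\gXh{\BX}$ is not dense in $\gXga{\BX}$. Combined with \rmi, which confines every compactly supported element of $\gXga{\BX}$ to $\gXh{\BX}$, this yields the announced failure of density of the compactly supported functions.
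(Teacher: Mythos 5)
Your part \rmiii\ is essentially the paper's own argument (transport everything to $\ghu{\BX}$ via the isometry $\cU^{-\ga}$, identify the image of $\gXh{\BX}$ with $\fX^{k-\ga}(\BX)$, and use that the latter lies in the proper closed subspace $\cH^\infty(\BX)^\perp$), and it is correct. The analytic parts \rmi\ and \rmii, however, hinge on a step you do not prove and which, as proposed, is genuinely problematic: the claim that integrability of $\cL^{-\ga}f$ forces $\wt f(\cdot,b)$ to vanish at $\pm i\rho$ to order $m>\ga$, obtained by deforming the inversion contour and asserting the asymptotics $r^{\ga-m-1}\e^{-2\mod{\rho}r}$. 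For non-$K$-invariant $f$ the inverse Helgason--Fourier transform is an integral over $B$ against $\e^{(i\la+\rho)A(x,b)}$ and the Plancherel density $\mod{\bc(\la)}^{-2}$; the order of vanishing $m$ may depend on $b$, the $\bc$-function contributes its own factors at $\pm i\rho$ to the power of $r$, and cancellation in the $B$-integral can destroy the pointwise lower bound along some directions --- and a \emph{lower} bound is exactly what you need to convert non-integrability into forced cancellation. The paper gets the same conclusion softly: writing $f=\cL^\ga g$ with $g=\cL^{-\ga}f\in\ghu{\BX}\subseteq\lu{\BX}$ (Proposition~\ref{p: useful property}), the transform $\wt g(\cdot,b)$ is continuous on the closed strip $\astar+i[-1,1]\rho$ \cite{H3,SS}, so \eqref{f: HelgasonFourier transform} gives $\bigmod{\wt f(\la,b)}\leq C\,\mod{Q(\la)}^\ga$ near $\pm i\rho$; since $\wt f(\cdot,b)$ is entire, it must vanish there to an integer order $\geq\ga$, hence $\geq k$. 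No asymptotics at infinity are needed, and this is the only place where non-integrality of $\ga$ enters.

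The second gap is in \rmi: your plan to get $\cL^{-k}f\in\ghu{\BX}$ ``by covering $\OV{B_R(o)}$ with finitely many balls of radius $1$ and reducing to \rmii'' cannot work, because \rmii\ applies to functions that already lie in $\ld{\BX}\cap\gXga{\BX}$ with support in a single small ball, and cutting $f$ off by a partition of unity destroys membership in $\gXga{\BX}$: the defining condition $\cL^{-\ga}f\in\ghu{\BX}$ is nonlocal and not stable under multiplication by cutoffs. Moreover the ``local $L^2$ control'' you invoke is not available in general: on an $n$-dimensional manifold $\cL^{-1}$ maps $\lu{\BX}$ only into $\lp{\BX}$ for $p\in\big(1,n/(n-2)\big)$, which misses $L^2$ as soon as $n>4$. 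This weaker regularity is, however, enough: the paper cites exactly this mapping property \cite[Theorem 4.7]{CGM}, notes that $\cL^{-k}f$ is then an $L^p$ function supported in $\OV{B_R(o)}$ (the support statement coming from the reverse Paley--Wiener theorem, as in your sketch), and applies \eqref{f: normah1funzioneL2} followed by Proposition~\ref{p: useful property}. Note finally that the paper's logical order is the reverse of yours: \rmii\ is deduced from the proof of \rmi\ (compact support of $\cL^{-k}f$, $L^2$-boundedness of $\cL^{-k}$, and Proposition~\ref{p: canc II}~\rmi), so nothing in \rmii\ needs to be established first.
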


\begin{proof}
We prove the result in the case where $k=1$.  The proof for $k\geq 2$ is similar
and is omitted.  

We first prove \rmi. We show that if $\ga$ is in $(0,1)$, and $f$ is a function in $\gXga{\BX}$
with compact support, then $f$ belongs to $\gXu{\BX}$.
Suppose that the support of $f$ is contained in the ball $\OV{B_R(o)}$ for some $R>0$.
Then, by the Paley--Wiener theorem for the Helgason--Fourier transform \cite[Corollary~5.9, p.~281]{H} and the 
fact that $f$ is in $\lu{\BX}$, $\wt{f}(\cdot,b)$ extends
to an entire function of exponential type $R$ uniformly in $b$ and there exist constants $C$ and $N$ such that 
\begin{equation} \label{f: est PW}
\bigmod{\wt f(\la,b)}
\leq C \, (1+|\lambda|)^N \e^{\mod{\Im \la}R} 
\quant \la \in \astar_\BC \quant b \in B.
\end{equation} 
Furthermore, $\wt f$ is smooth on $\astar_\BC\times B$ and satisfies the following symmetry 
condition:
\begin{equation} \label{f: symmetry condition}
\int_B \e^{(-i\la+\rho)(A(x,b))} \,  \wt f(-\la, b) \wrt b
= \int_B \e^{(i\la+\rho)(A(x,b))} \, \wt f(\la, b) \wrt b
\end{equation}
for every $x$ in $\BX$ and every $\la$ in $\astar_\BC$; here $\rho \in \astar$ denotes as usual half the sum of the positive restricted roots, while $A : \BX \times B \to \fra$ is defined as in \cite[p.\ 223]{H}.

Since $f$ is in $\gXga{M}$, by Proposition \ref{p: useful property} there exists a function $g$ in $\ghu{\BX}$
such that
\[
f = \cL^\ga g.
\]  
Since $g$ is in $\lu{\BX}$, its Helgason--Fourier transform
$\wt{g}(\cdot,b)$ is a continuous function on $\astar + i[-1,1]\rho$ for almost all $b$ in $B$ \cite{H3,SS}, and
\begin{equation}\label{f: HelgasonFourier transform}
\wt f(\la, b)
= Q(\la)^\ga \,\, \wt g(\la,b) \,,
\end{equation}
where $Q$ is the quadratic form on $\astarc\times \astarc$ defined by
$$
Q(\la)
= \prodo{\la}{\la} + \prodo{\rho}{\rho}
$$
(see \cite[Lemma 1.4, p.\ 225]{H} and \cite[Section 1]{CGM}).
Note that $Q$ vanishes at the points of 
$\pm i \rho$;
hence from \eqref{f: HelgasonFourier transform} we deduce that $\wt{f}(\pm i\rho,b)=0$ for almost all $b$ in $B$, and actually this holds for all $b$ in $B$ because $\wt{f}$ is smooth on $\astar_\BC\times B$.
Since $\wt{f}(\cdot,b)$ is entire, its zeros must have at least order 1.
Moreover $1/Q$ is a meromorphic function in $\astar_\BC$ with simple poles at $\pm i \rho$.  
Therefore $\la \mapsto Q(\la)^{-1} \,\, \wt{f}(\la,b)$ is an entire function for every $b$ in $B$. 
Since $Q(-\la) = Q(\la)$ for all $\la$ in $\astar_\BC$, 
from the symmetry condition \eqref{f: symmetry condition} we deduce that
$$
\int_B \e^{(-i\la+\rho) A(x,b)} \,  Q(-\la)^{-1} \wt f(-\la, b)\wrt b
= \int_B \e^{(i\la+\rho) A(x,b)} \, Q(\la)^{-1} \, \wt f(\la, b)\wrt b.  
$$
Furthermore, $(\la,b) \mapsto Q(\la)^{-1} \,\, \wt{f}(\la,b)$ 
is clearly smooth on $\astar_\BC \times B$, and satisfies the estimate \eqref{f: est PW} (possibly
with a different constant $C$).  
Again by the Paley--Wiener theorem for the Helgason--Fourier transform,
 there exists a distribution $h$ on $\mathbb X$ supported in $\OV{B_R(o)}$ such that
$$
\ds\wt{h}(\la,b) = Q(\la)^{-1} \,\,\wt{f}(\la,b) \,,
$$   
that is, $h=\mathcal L^{-1}f$. By assumption $f\in\mathfrak X^{\ga}(\mathbb X)\subseteq \mathfrak h^1(\mathbb X)
\subseteq L^1(\mathbb X)$. 
Hence, by \cite[Theorem 4.7]{CGM}, $\cL^{-1}f$ 
is a function in $L^p(\mathbb X)$ for every $p\in \big(1,n/(n-2)\big)$.
Since $\opL^{-1} f$ is supported in $\OV{B_R(o)}$, by \eqref{f: normah1funzioneL2}
we deduce that $\cL^{-1}f \in \mathfrak h^1(\mathbb X)$.
By Proposition \ref{p: useful property} we conclude that $f$ 
belongs to $\gXu{\BX}$. 

\smallskip
We now prove \rmii. Suppose that $f \in \ld{\BX}\cap \gXga{\BX}$ and has support contained in the ball $B = B_1(o)$. Then $f$ is integrable, so from the proof of \rmi\ it follows that $\cL^{-1} f$ is supported in $\OV{B}$, and moreover $\cL^{-1} f$ is in $L^2(M)$ because $\cL^{-1}$ is $L^2$-bounded. From Proposition \ref{p: canc II} \rmi\ we deduce that $f$ is in $(P_B^1)^\perp$, and consequently $f$ is a multiple of a global $\mathfrak{X}^1$-atom.

\smallskip
Finally we prove \rmiii, i.e., we show that $\gXu{\BX}$ is not dense in $\gXga{\BX}$. 
Consider $\OV{\gXu{\BX}}^{\, \gXga{\BX}}$, i.e., the closure of $\gXu{\BX}$ in $\gXga{\BX}$.    
Since $\cU^{-\ga}$ is an isometry between $\gXga{\BX}$ and 
$\ghu{\BX}$, the topology of $\gXga{\BX}$ is transported by $\cU^{-\ga}$ to that of $\ghu{\BX}$, and 
$\cU^{-\ga}\, \big(\OV{\gXu{\BX}}^{\, \gXga{\BX}} \big) = \OV{\cU^{-\ga}\gXu{\BX}}^{\, \ghu{\BX}}$.  
Clearly $\cU^{-\ga} \gXu{\BX} = \fX^{1-\ga}(\BX)$ 
and we obtain that 
$$
\cU^{-\ga}\, \big(\OV{\gXu{\BX}}^{\, \gXga{\BX}} \big) =\OV{\fX^{1-\ga}(\BX)}^{\, \ghu{\BX}}.
$$   
However, Proposition~\ref{p: Uga bounded on Hone}~\rmvi\ ensures that $\fX^{1-\ga}(\BX)$ 
is properly contained in $\cH^\infty(\BX)^\perp$, 
so that $\OV{\fX^{1-\ga}(\BX)}^{\, \ghu{\BX}} \subseteq \cH^\infty(\BX)^\perp$,   
which we already know to be a proper closed subspace of $\ghu{\BX}$.  Thus, 
$\OV{\gXu{\BX}}^{\, \gXga{\BX}} \subseteq \cU^{\ga}\cH^\infty(\BX)^\perp \subsetneq \cU^\ga \ghu{\BX} = \gXga{\BX}$.    
\end{proof}

\section{Modified Riesz--Hardy space}
\label{s: Modified Riesz--Hardy space}

In this section, we consider the modified Riesz--Hardy space $\wthuR{M}$, defined by
$$
\wthuR{M}
:=  \big\{f \in \ghu{M}: \mod{\cR f} \in \lu{M} \big\},
$$
where $\cR$ denotes the ``geometric" Riesz transform $\nabla \cL^{-1/2}$.
Clearly $\wthuR{M}\subseteq \huR{M}$, where $\huR{M}$ is the Riesz--Hardy space defined in \eqref{f: rieszhardy}.
The main result of this section is the characterisation of $\wthuR{M}$ as the space $\mathfrak{X}^{1/2}(M)$ introduced in the previous section.

An important ingredient in the proof of this characterisation of  $\wthuR{M}$ is the following boundedness result.

\begin{proposition} \label{p: resolvent bounded}
The operator $(\si+\cL)^{-1/2}$ is bounded from $\lu{M}$ to $\ghu{M}$ for all $\si>\be^2-b$.   
\end{proposition}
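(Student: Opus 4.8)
The plan is to pass through the subordination formula
$$
(\si+\cL)^{-1/2} = \frac{1}{\sqrt\pi}\int_0^\infty t^{-1/2}\,\e^{-\si t}\,\cH_t\wrt t,
$$
valid on $\ld{M}$, and to prove the boundedness by duality. Since $\ghu{M}$ is a Banach space with dual $\gbmo{M}$ (see \pref{f: dualita_norme}) and $(\si+\cL)^{-1/2}$ is self-adjoint on $\ld{M}$, for $f$ in a dense subclass of $\lu{M}$ — say $f\in\ld{M}$ with compact support — we have, for every $g\in\gbmo{M}\cap\ld{M}$,
$$
\bigabs{\prodo{(\si+\cL)^{-1/2}f}{g}} = \bigabs{\prodo{f}{(\si+\cL)^{-1/2}g}} \leq \bignorm{f}{\lu{M}}\,\bignorm{(\si+\cL)^{-1/2}g}{\ly{M}}.
$$
Thus the proof splits into two independent tasks: (a) the \emph{quantitative} dual estimate $\bignorm{(\si+\cL)^{-1/2}g}{\ly{M}}\leq C\,\bignorm{g}{\mathfrak{bmo}}$; and (b) the \emph{qualitative} fact that $(\si+\cL)^{-1/2}f\in\ghu{M}$ for $f$ in the dense subclass, so that $\bignorm{(\si+\cL)^{-1/2}f}{\ghu{M}} = \sup\set{\bigabs{\prodo{(\si+\cL)^{-1/2}f}{g}}: \bignorm{g}{\mathfrak{bmo}}\leq1}$ may legitimately be computed against the dual. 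Once (a) and (b) are in hand, the displayed inequality yields the desired bound and one extends to all of $\lu{M}$ by density.

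For (a), write $k_\si$ for the (nonnegative) kernel of $(\si+\cL)^{-1/2}$, obtained by inserting the heat kernel $h_t$ into the subordination integral. The key structural facts are: first, by Markovianity $\int_M h_t(x,y)\wrt\mu(y) = 1$, whence
$$
\int_M k_\si(x,y)\wrt\mu(y) = \frac{1}{\sqrt\pi}\int_0^\infty t^{-1/2}\,\e^{-\si t}\wrt t = \si^{-1/2};
$$
second, the local singularity of $k_\si$ along the diagonal is that of an operator of order $-1$, hence locally integrable, while at large distances $k_\si$ inherits from the Gaussian heat kernel bounds with spectral gap an exponential decay at rate approaching $\sqrt{\si+b}$. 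Fixing $x$ and splitting $g = g_{B_1(x)} + \bigl(g-g_{B_1(x)}\bigr)$, the constant part contributes $\si^{-1/2}\,\bigabs{g_{B_1(x)}}\leq C\,\bignorm{g}{\mathfrak{bmo}}$, since the very definition of $\gbmo{M}$ forces the local averages of $g$ to be uniformly bounded by $\bignorm{g}{\mathfrak{bmo}}$. The oscillatory part is estimated by summing over a uniformly locally finite cover of $M$ by unit balls $\set{B_z}$: on each $B_z$ one controls $\bignorm{g-g_{B_1(x)}}{\ld{B_z}}$ by $C\,\mu(B_z)^{1/2}\,\bignorm{g}{\mathfrak{bmo}}$ and pairs it, via Cauchy--Schwarz, with $\bignorm{k_\si(x,\cdot)}{\ld{B_z}}$.

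The main obstacle, and the place where the hypothesis $\si>\be^2-b$ is used, is to make this last summation converge: one must show that
$$
\sum_z \mu(B_z)^{1/2}\,\bignorm{k_\si(x,\cdot)}{\ld{B_z}} \leq C
$$
uniformly in $x$. This requires tracking not merely the exponential distance decay of $k_\si$ but also the volume normalisation carried by the heat kernel, and checking that the resulting decay rate, essentially $\sqrt{\si+b}$, dominates the exponential volume growth rate $\be$ of \pref{f: volume growth} — which is precisely the content of $\si>\be^2-b$, equivalently $\sqrt{\si+b}>\be$. The same mechanism settles task (b): for $f$ compactly supported the function $(\si+\cL)^{-1/2}f$ decays exponentially at a rate exceeding $\be$, so the decomposition $(\si+\cL)^{-1/2}f = \sum_z \bigl[(\si+\cL)^{-1/2}f\bigr]\charfn{B_z}$ is, by \pref{f: normah1funzioneL2}, an absolutely convergent sum in $\ghu{M}$ of multiples of functions supported in unit balls, whence $(\si+\cL)^{-1/2}f\in\ghu{M}$. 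I expect the careful bookkeeping of the volume factors in the global kernel estimate — ensuring that the borderline condition $\si>\be^2-b$ genuinely suffices rather than a stronger one — to be the most delicate point.
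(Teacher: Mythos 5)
Your skeleton (subordination, reduction to a summation over unit balls, identification of $\sqrt{\si+b}>\be$ as the threshold) is reasonable, and the duality layer, though dispensable, is not wrong. The genuine gap is in the central claim that $k_\si$ enjoys pointwise decay at rate approaching $\sqrt{\si+b}$ \emph{together with} a ``volume normalisation'' factor. Under the paper's hypotheses \rmi--\rmiii\ the available pointwise bounds are, up to bounded local factors, $h_t(x,y)\le C_\ep\,\e^{-bt}\,\e^{-d(x,y)^2/((4+\ep)t)}$, and subordination then yields only $k_\si(x,y)\le C_\ep\,\e^{-(1-\ep)\sqrt{\si+b}\,d(x,y)}$; the extra factor $\e^{-\be\,d(x,y)}$ that your accounting implicitly requires (and which does hold, e.g., on real hyperbolic space, where $h_t\asymp t^{-3/2}\frac{d}{\sinh d}\,\e^{-t-d^2/(4t)}$) is a symmetric-space phenomenon, not something provided by Ricci lower bounds, positive injectivity radius and spectral gap. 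With only the rate-$\sqrt{\si+b}$ pointwise bound, your key summation fails in the stated range: by \pref{f: volume growth} a $1$-discretisation has up to $\approx k^\al\e^{2\be k}$ points $z$ with $d(z,x)\in[k,k+1)$, while each term satisfies only $\mu(B_z)^{1/2}\norm{k_\si(x,\cdot)}{\ld{B_z}}\lesssim\e^{-c'k}$ with $c'<\sqrt{\si+b}$ (unit balls have measure $\approx 1$ by \pref{f: lower bound balls}), so the $k$-th annulus contributes $\approx\e^{(2\be-c')k}$ and convergence forces $\sqrt{\si+b}>2\be$, i.e.\ $\si>4\be^2-b$, strictly stronger than the claimed range. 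The same accounting breaks your qualitative task (b). A secondary flaw: for $n\ge 2$ the diagonal singularity $d(x,y)^{1-n}$ of $k_\si$ is locally integrable but \emph{not} locally square integrable, so $\norm{k_\si(x,\cdot)}{\ld{B_z}}=\infty$ for the balls containing $x$; the near-diagonal part must instead be paired by H\"older with exponents $p<n/(n-1)$, $p'>n$, using the John--Nirenberg-type equivalence of the $\mathfrak{bmo}^q$ norms.

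What rescues the full range --- and this is exactly how the paper proceeds --- is to measure the far-away mass in $L^2$ on large balls rather than pointwise, because then \pref{f: normah1funzioneL2} charges only $\mu\big(B_{k+1}(o)\big)^{1/2}\approx\e^{\be k}$ instead of the cardinality $\approx\e^{2\be k}$ of a covering by unit balls. Concretely, the paper writes $(\si+\cL)^{-1/2}=m(\cD)$ with $\cD=\sqrt{\cL-b}$, $m(\la)=(c^2+\la^2)^{-1/2}$ and $c=\sqrt{\si+b}$, decomposes $\wh m=K_0(c\mod{\cdot})$ by an even partition of unity $\{\om_j\}$ in the time variable, and uses finite propagation speed for $\cos(t\cD)$: the piece $T_j(\cD)h$ (for $h$ supported in $B_1(o)$) is supported in $B_{j+1}(o)$ and obeys the $L^1$-to-$L^2$ bound $\norm{T_j(\cD)h}{L^2}\lesssim\e^{-c'j}\norm{h}{L^1}$ with $c'\in(\be,c)$, coming from the exponential decay of $K_0$ at rate $c$ and $L^1\to L^2$ bounds for Bessel-type propagators; then \pref{f: normah1funzioneL2} gives $\norm{T_j(\cD)h}{\frh^1}\lesssim j^{\al/2}\,\e^{(\be-c')j}\norm{h}{L^1}$, summable precisely because $c>\be$, while the local piece $T_0$ is estimated in $L^p$ with $p'>n$, resolving the diagonal singularity. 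If you prefer to keep your heat-semigroup framework instead of the wave equation, the repair is to replace pointwise kernel bounds by Davies--Gaffney $L^2$ off-diagonal estimates for $\cH_t$ composed with the ultracontractivity bound \pref{f: heat1p}: these yield $\norm{\charfn{A_k}(\si+\cL)^{-1/2}h}{L^2}\lesssim_\ep\e^{-(1-\ep)\sqrt{\si+b}\,k}\norm{h}{L^1}$ on annuli $A_k\subseteq B_{k+1}(o)$, after which the same $\mu^{1/2}$-bookkeeping via \pref{f: normah1funzioneL2} closes the argument in the full range $\si>\be^2-b$.
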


In the proof of this proposition 
we shall apply the following lemma, which is a slight variant of \cite[Lemma 2.4]{MMV0} and \cite[Lemma 5.1]{MMV1}. 
Let 
$\cJ_{\nu}(t)=t^{-\nu} J_{\nu}(t)$\,, where $J_{\nu}$ denotes the standard Bessel function 
of the first kind and order $\nu$, and let $\cO$ denote the differential operator $t \partial_t$ on the real line.

\begin{lemma}\label{l: intbyparts}
For every positive integer $N$ there exists a polynomial $P_N$ of degree $N$ without constant term such that
$$
\int_{-\infty}^{+\infty}f(t)\cos(tv)\wrt t=\int_{-\infty}^{+\infty}P_N(\cO)f(t) \, \cJ_{N-1/2}(tv)\wrt t\,,
$$
for all compactly supported functions $f$ such that $\cO^{\ell}f\in L^1(\mathbb R)$ for every $\ell=0,\dots,N$. 
\end{lemma}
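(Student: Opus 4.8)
The plan is to reduce the identity to a purely algebraic recurrence for the normalised Bessel functions $\cJ_\nu$ under the scaling operator $\cO = t\partial_t$, and then to transfer the resulting differential operator off the kernel and onto $f$ by integration by parts. The starting observation is that $\cJ_{-1/2}(t) = \sqrt{2/\pi}\,\cos t$, so that $\cos(tv)$ is, up to a constant, the kernel of order $-1/2$; the task is then to climb from order $-1/2$ up to order $N-1/2$.

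First I would record two classical facts about Bessel functions: the differentiation rule $\frac{d}{dt}[t^{-\nu}J_\nu(t)] = -t^{-\nu}J_{\nu+1}(t)$, which in the $\cJ$ normalisation reads $\cJ_\nu'(t) = -t\,\cJ_{\nu+1}(t)$, and the three-term recurrence $J_{\nu-1}+J_{\nu+1} = (2\nu/t)J_\nu$, which becomes $\cJ_{\nu-1}(t) + t^2\,\cJ_{\nu+1}(t) = 2\nu\,\cJ_\nu(t)$. The crucial point is that $\cO = t\partial_t$ is scale invariant, so $\cO[\cJ_\nu(tv)] = (tv)\,\cJ_\nu'(tv)$; combining this with the two identities above I would obtain the clean recurrence
\[
\cO[\cJ_\nu(tv)] = \cJ_{\nu-1}(tv) - 2\nu\,\cJ_\nu(tv), \qquad\text{i.e.}\qquad \cJ_{\nu-1}(tv) = (\cO + 2\nu)\,\cJ_\nu(tv).
\]
The decisive feature is that these coefficients no longer involve $v$: every factor of $v$ produced by differentiation cancels against those carried by the recurrence, which is precisely what makes the final identity free of $v$.

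Iterating this lowering relation from $\nu = N-1/2$ down to $\nu = 1/2$ yields
\[
\cos(tv) = \sqrt{\pi/2}\,\cJ_{-1/2}(tv) = \sqrt{\pi/2}\,(\cO+1)(\cO+3)\cdots(\cO + 2N-1)\,\cJ_{N-1/2}(tv),
\]
that is, $\cos(tv) = \sqrt{\pi/2}\,R_N(\cO)[\cJ_{N-1/2}(tv)]$ with $R_N(x) = \prod_{j=1}^N (x + 2j-1)$. Inserting this into $\int f\,\cos(tv)\wrt t$ and integrating by parts moves $R_N(\cO)$ onto $f$. Since $f$ is compactly supported the boundary terms at $\pm\infty$ vanish (and there is no contribution at $t=0$, the integral running over all of $\BR$ with $\cJ_{N-1/2}$ smooth there), and the formal adjoint of $\cO$ with respect to $\wrt t$ is $\cO^\ast = -(\cO+1)$. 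Hence the transferred operator is
\[
P_N(\cO) = \sqrt{\pi/2}\,R_N(-\cO-1) = \sqrt{\pi/2}\,(-1)^N\,\cO(\cO-2)\cdots(\cO - 2N+2),
\]
a polynomial of degree $N$; the explicit factor $\cO$ shows that it has no constant term, as required. (As a sanity check, $N=1$ gives $P_1(\cO) = -\sqrt{\pi/2}\,\cO$.)

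Finally I would verify convergence and the legitimacy of the integrations by parts under the stated hypotheses. The function $\cJ_{N-1/2}$ is bounded on $\BR$ — analytic at the origin and decaying like $t^{-N}$ at infinity — so $\int P_N(\cO)f\,\cJ_{N-1/2}(tv)\wrt t$ converges absolutely precisely because $P_N(\cO)f$ is a linear combination of the $\cO^\ell f$ with $\ell \le N$, each in $L^1(\BR)$ by assumption; the same bounds justify discarding the boundary terms at each of the $N$ integrations by parts. The only genuinely delicate point is the bookkeeping behind the recurrence of the second paragraph: one must check that the scale invariance of $\cO$ together with the exact constants in the Bessel recurrences conspires to eliminate every power of $v$, since any surviving $v$-dependent factor would break the statement. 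I expect this cancellation to be the main thing to get right, whereas the integration by parts and the convergence are routine once $\cO^\ell f\in L^1(\BR)$ for $\ell = 0,\dots,N$ is in force.
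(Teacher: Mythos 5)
Your proof is correct, and it is essentially the intended argument: the paper itself does not prove this lemma but defers to \cite[Lemma~2.4]{MMV0} and \cite[Lemma~5.1]{MMV1}, where the identity is obtained by the same device, namely the lowering relation $\cJ_{\nu-1}(tv)=(\cO+2\nu)\,\cJ_{\nu}(tv)$ iterated from $\nu=1/2$ up to $\nu=N-1/2$, followed by $N$ integrations by parts using the formal adjoint $\cO^{*}=-(\cO+1)$. Your bookkeeping checks out, including the normalisation $\cJ_{-1/2}(t)=\sqrt{2/\pi}\,\cos t$ and the resulting factorisation $P_N(\cO)=\sqrt{\pi/2}\,(-1)^N\,\cO(\cO-2)\cdots(\cO-2N+2)$, which makes the degree $N$ and the absence of a constant term manifest.
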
 

\begin{proof}[Proof of Proposition \ref{p: resolvent bounded}]
Suppose that $g$ is in $\lu{M}$. Observe that we can write $g=\sum_jg_j$, 
where each $g_j$ is supported in a ball of radius $1$ and $\|g\|_{L^1}=\sum_j\|g_j\|_{L^1}$\,.
It suffices to prove that $(\si+\cL)^{-1/2}g_j$ is in 
$\ghu{M}$, and that there exists a constant $C$, independent of $g$ and $j$, such that 
$$
\bignorm{(\si +\cL)^{-1/2}g_j}{\mathfrak h^1}
\leq C \, \bignorm{g_j}{L^1}\,. 
$$

For the sake of convenience, in the rest of this proof we suppress the index $j$ and 
write $h$ instead of $g_j$.  We assume that the support of $h$ is contained in the ball
$B_1(o)$.

We argue as in the proof of \cite[Lemma~4.2]{MMV2}.  
Denote by $\om$ an even function in 
$C_c^\infty(\BR)$ which is supported in $[-3/4,3/4]$, is equal to~1
in $[-1/4,1/4]$, and satisfies 
$$
\sum_{j\in \BZ} \om(t-j) = 1
\quant t \in \BR.
$$
Define $\om_0 := \om$, and, for each $j$ in $\{1,2,3,\ldots\}$, 
\begin{equation}\label{omj}
\om_j(t) 
:= \om (t-j) + \om(t+j) \quant t \in \BR.
\end{equation}
Observe that the support of $\om_j$ is contained in the set of all $t$ in $\BR$ such that 
$j-3/4\le\mod{t}\le j+3/4$.  Set $m(\la) = (c^2+\la^2)^{-1/2}$, where $c:=\sqrt{\si+b}>\be$.  Recall that 
\begin{equation}\label{f: thirdBessel}
\wh m(t) 
= K_0(c|t|)
\quant t \in \BR,
\end{equation}
where $K_\nu$ denotes the modified Bessel function of the third kind and order $\nu$ (see, for instance,
\cite[p.~108]{Le}).

As in the proof of Theorem \ref{t: bound semigroup O}, let $\cD = \sqrt{\cL - b}$.
By the spectral theorem, we write $(\si+\cL)^{-1/2} = m(\cD)=\sum_{j=0}^{\infty}T_j(\cD)$\,, where
\begin{equation} \label{f: Bj}
T_j(\la) 
= \ir  (\om_j\, \wh m)(t)  \, \cos(  t \la) \wrt t \quant \la \in \BR.
\end{equation}
An induction argument, based on formulae \cite[(5.7.9), p.~110]{Le} and \cite[(5.7.12), p.~111]{Le}, 
and the estimates \cite[formulae (5.7.12) and (5.11.9)]{Le} 
show that, for all $\ell\in\mathbb N$, the function $\cO^{\ell}K_0$ is in $L^1(\mathbb R^+)$, and, 
for all $\ell\geq 1$, $\cO^{\ell}K_0$ is bounded;
moreover, for all $\varepsilon \in (0,1)$ and $\ell\in\mathbb N$, there exists a positive constant $C$ such that 
\begin{equation}\label{f: K0infty}
|\cO^{\ell}K_0(t)|\leq C \,\e^{-\varepsilon t}\qquad \forall t\geq 1/4\,.
\end{equation}
In view of \eqref{f: thirdBessel}, it is straightforward to check that $\cO^\ell(\om_j\, \wh m)$ 
is in $\lu{\BR}$ for all nonnegative integers $\ell$ and $j$, and that $\cO^\ell(\om_j\, \wh m)$ is bounded whenever $\ell+j\geq 1$. 
By Lemma \ref{l: intbyparts}, for every positive integer $N$, for all $j\geq 0$ and $\la\in\mathbb R$
\begin{equation}\label{Tjla}
T_j(\la)=\int_{-\infty}^{+\infty}P_N(\cO)(\om_j\, \wh m)(t)\cJ_{N-1/2}(t\la)\wrt t\,,
\end{equation}
where $P_N$ is a polynomial of degree $N$ without constant term.  
 
By \eqref{f: K0infty}, for all $N$ there exist positive constants $C$ and $c'\in (\beta,c)$ 
such that, for $j = 1,2,3,\dots$,
\begin{equation}  \label{f: decay wh m}
\bigmod{P_N(\cO)(\om_j\, \wh m)(t)}
\leq C \,  \e^{-c'\mod{t}} \,  
\end{equation} 
on the support of $\om_j$. By the asymptotics of $\cJ_{N-1/2}$ \cite[formula (5.11.6)]{Le},
$$
\sup_{s>0} \mod{(1+s)^{N} \, \cJ_{N-1/2}  (s)} < \infty.
$$
Let $k_{\cJ_{N-1/2} (t\cD)}$ denote the Schwartz kernel of the operator $\cJ_{N-1/2} (t\cD)$.
If we choose $N>(n+2)/2$, we may apply \cite[Proposition~2.2~\rmi]{MMV0} and conclude that   
\begin{equation}\label{f: Jh2}
\begin{aligned}
\bignorm{\cJ_{N-1/2} (t\cD) h}{L^2}
&\leq \bignorm{h}{L^1}\,\sup_{y\in M}  \bignorm{k_{\cJ_{N-1/2} (t\cD)}(\cdot,y)}{L^2} \\
&\leq C\, \bignorm{h}{L^1}\,\mod{t}^{-n/2}\, \bigl(1+\mod{t}\bigr)^{n/2} 
\end{aligned}
\end{equation}
for every $t \in \BR\setminus \{0\}$.  Take $j\geq 1$ and observe that the support of $P_N(\cO)(\om_j\, \wh m)$ 
is contained in $\set{t\in\BR:j-3/4\le\mod{t}\le j+3/4}$. 
Hence 
\begin{align}
\bignorm{T_j(\cD)h}{L^2}
& \leq  \ir  \bigmod{P_N(\cO)(\om_j\, \wh m)(t)}  
     \,  \bignorm{\cJ_{N-1/2}  (t \cD)h}{L^2}  \wrt t \nonumber \\
& \leq C \bignorm{h}{L^1}\, \int_{j-3/4}^{j+3/4} \bigmod{{P_N(\cO)(\om_j\, \wh m)}(t)}  \,
         \bigmod{t}^{-n/2}\, \bigl(1+\mod{t}\bigr)^{n/2} \wrt t \\
& \leq  C \, \e^{-c' j} \bignorm{h}{L^1}
    \quant j \in \{1,2,\ldots\}\nonumber.
\end{align}
In the last inequality we have used \eqref{f: Jh2} and  \eqref{f: decay wh m}. 
By \eqref{Tjla} and finite propagation speed, $T_j(\cD)h$ is a function in $\ld{M}$ 
with support contained in $B_{j+1}(o)$; so, by \eqref{f: normah1funzioneL2},
$$
\bignorm{T_j(\cD)h}{\mathfrak h^1}\leq C\,\bignorm{h}{L^1}\, \e^{-c' j}\,j^{\al/2}\,\e^{\be j}\,,
$$
for every $j \in \{1,2,\ldots\}$.  Hence 
$$
\begin{aligned}
\Bignorm{\sum_{j=1}^\infty T_j(\cD)  h}{\mathfrak h^1}  
\leq  C \bignorm{h}{L^1}\, \sum_{j=1}^\infty  j^{\al/2} \, \e^{(\be-c')j} \leq C \bignorm{h}{L^1} \,, 
\end{aligned}
$$
because $c'>\be$.  

It remains to estimate $T_0(\cD)$. Observe that, for all $t\in\BR\setminus \{0\}$, 
since $\cJ_{N-1/2}(t\,\cdot)$ is entire of exponential type $|t|$, 
by finite propagation speed $k_{\cJ_{N-1/2}(t\cD)}(\cdot,y)$ is supported in $\overline{B_{|t|}(y)}$. 
Hence, by H\"older's inequality and \cite[Proposition~2.2~\rmi]{MMV0}, for every $t\in [-1,1] \setminus \{0\}$ and $p \in [1,2]$,
$$
\begin{aligned}
\bignorm{\cJ_{N-1/2}  (t \cD)h}{L^p}
&\leq \bignorm{h}{L^1} \sup_{y\in M}  \bignorm{k_{\cJ_{N-1/2} (t\cD)}(\cdot,y)}{L^p}\\
&\leq C\,\bignorm{h}{L^1} |t|^{n(1/p-1/2)}\,\sup_{y\in M}  \bignorm{k_{\cJ_{N-1/2} (t\cD)}(\cdot,y)}{L^2}\\
&\leq C\,\bignorm{h}{L^1} |t|^{-n/p'}  \,.
 \end{aligned}
$$
Therefore, we see that, if $p'>n$, then
$$
\begin{aligned}
\bignorm{T_0(\cD)h}{L^p}
& \leq  \ir  \bigmod{P_N(\cO)(\om_0\, \wh m)(t)}  \,  \bignorm{\cJ_{N-1/2}  (t \cD)h}{L^p}  \wrt t \\
& \leq C \bignorm{h}{L^1} \,\int_{-1}^{1} \bigmod{{P_N(\cO)(\om_0\, \wh m)(t)}}  \, \bigmod{t}^{-n/p'}\,\wrt t \\
& \leq C \bignorm{h}{L^1}\,;
\end{aligned}
$$
observe that $P_N(\cO)(\om_0\, \wh m)$ is bounded since $P_N$ has no constant term. Again by finite propagation speed, $T_0(\cD)h$ is supported in ${B_2(o)}$, so from \eqref{f: normah1funzioneL2} we deduce that
$$
\bignorm{ T_0(\cD)  h}{\mathfrak h^1}  \leq C \bignorm{h}{L^1}\,,
$$ 
 as required to conclude the proof. 
\end{proof}

\begin{theorem}  \label{t: modified HR}
The modified Riesz--Hardy space $\wthuR{M}$ agrees with $\gXum{M}$.
\end{theorem}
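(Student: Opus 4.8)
The plan is to match the two spaces through the characterisation of $\gXum{M}$ supplied by Proposition~\ref{p: useful property}: a function $f$ lies in $\gXum{M}$ if and only if both $f$ and $\cL^{-1/2}f$ belong to $\ghu{M}$. Writing $g:=\cL^{-1/2}f$, so that $\cR f=\nabla\cL^{-1/2}f=\nabla g$, the theorem reduces to showing that, for $f\in\ghu{M}$, one has $g\in\ghu{M}$ if and only if $\mod{\nabla g}\in\lu{M}$. I would prove the two inclusions separately, and I expect the inclusion $\wthuR{M}\subseteq\gXum{M}$ to be the hard one.

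For $\gXum{M}\subseteq\wthuR{M}$ I would exploit the algebraic identity
\[
\cR\,\cU_\si^{1/2}=\nabla\cL^{-1/2}\,\cL^{1/2}(\si+\cL)^{-1/2}=\nabla(\si+\cL)^{-1/2},
\]
valid on $\ld{M}$ for every $\si>0$ by the spectral theorem. Fixing $\si>\be^2-b$, Proposition~\ref{p: invariance of the range} lets me write any $f\in\gXum{M}$ as $f=\cU_\si^{1/2}h$ with $h\in\ghu{M}$ and $\bignorm{h}{\mathfrak h^1}\le C\bignorm{f}{\mathfrak X^{1/2}}$, whence $\cR f=\nabla(\si+\cL)^{-1/2}h$. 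It then suffices to show that the \emph{shifted Riesz transform} $\nabla(\si+\cL)^{-1/2}$ maps $\lu{M}$ into $\lu{M}$; since $\ghu{M}\subseteq\lu{M}$ and $f\in\ghu{M}$, this yields $f\in\wthuR{M}$. I would obtain this boundedness by repeating the proof of Proposition~\ref{p: resolvent bounded} with the gradient inserted: decomposing $(\si+\cL)^{-1/2}=\sum_j T_j(\cD)$ as there, with $\cD=\sqrt{\cL-b}$ and $c=\sqrt{\si+b}>\be$, one estimates $\bignorm{\nabla T_j(\cD)h}{L^2}$ via $\bignorm{\nabla u}{L^2}=\bignorm{\cL^{1/2}u}{L^2}$, absorbing the extra factor $\cL^{1/2}$ into the Bessel multiplier at the cost of enlarging $N$. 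Finite propagation speed still confines $\nabla T_j(\cD)h$ to $B_{j+1}(o)$, so that $\bignorm{\nabla T_j(\cD)h}{L^1}\le C\,\mu(B_{j+1})^{1/2}\,\e^{-c'j}\bignorm{h}{L^1}\le C\,\e^{(\be-c')j}\bignorm{h}{L^1}$ with $c'>\be$, and these sum in $j$.

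For the reverse inclusion $\wthuR{M}\subseteq\gXum{M}$ I would first establish the embedding $W^{1,1}(M)\incluso\ghu{M}$, that is, $\bignorm{u}{\mathfrak h^1}\le C\big(\bignorm{u}{L^1}+\bignorm{\mod{\nabla u}}{L^1}\big)$. Fixing a $1$-discretisation $\fM$ with its finite-overlap cover $\{B_z\}$ and subordinate partition of unity $\{\psi_z\}$ (as in the proof of Theorem~\ref{t: bound semigroup O}), I would write $u=\sum_z\psi_z u$. Each $\psi_z u$ is supported in a ball of radius $1$, hence is a multiple of a global atom, so by \eqref{f: normah1funzioneL2} with $p=n/(n-1)$ and the uniform local Sobolev embedding $W^{1,1}(B_z)\incluso L^{n/(n-1)}(B_z)$ afforded by bounded geometry (and \eqref{f: poincare}) one gets $\bignorm{\psi_z u}{\mathfrak h^1}\le C\big(\bignorm{\psi_z u}{L^1}+\bignorm{\mod{\nabla(\psi_z u)}}{L^1}\big)$; summing over $z$ with finite overlap and $\mod{\nabla\psi_z}\le C$ gives the claim. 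Granting this, for $f\in\wthuR{M}$ we have $\mod{\nabla g}=\mod{\cR f}\in\lu{M}$, so by Proposition~\ref{p: useful property} it only remains to prove $g=\cL^{-1/2}f\in\lu{M}$.

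The step $g\in\lu{M}$ is the main obstacle, and it is delicate because $\cL^{-1/2}$ does \emph{not} map $\lu{M}$ into itself: its kernel decays only at the rate $\sqrt b$, which by Brooks' bound $b\le\be^2$ need not beat the volume growth rate $\be$. My plan is to extract $g\in\lu{M}$ from $\mod{\nabla g}\in\lu{M}$ using the isoperimetric form of the Cheeger inequality \eqref{f: FF}. First, the subordination formula together with the ultracontractivity estimate \eqref{f: heat1p} and the spectral gap $b>0$ give $\cL^{-1/2}\colon\lu{M}\to\lp{M}$ for every $p\in(1,n/(n-1))$ (the $t$-integral $\int_0^\infty t^{-1/2}\bigopnorm{\cH_t}{1;p}\wrt t$ converges once $p'>n$, by the small-$t$ power and the large-$t$ exponential decay); hence $g\in\lp{M}$ for such a $p$, and in particular every superlevel set $\{\mod g>s\}$ has finite measure. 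By the coarea formula, \eqref{f: FF} is equivalent to the isoperimetric inequality $\mu(\Omega)\le C\,\sigma(\partial\Omega)$, which applied to the finite-measure sets $\{\mod g>s\}$ and integrated in $s$ yields $\bignorm{g}{L^1}\le C\bignorm{\mod{\nabla g}}{L^1}<\infty$, \emph{without} any competition between decay and growth rates. Thus $g\in W^{1,1}(M)\incluso\ghu{M}$, and Proposition~\ref{p: useful property} gives $f\in\gXum{M}$. The point requiring the most care is precisely the passage from the $C_c^\infty$ inequality \eqref{f: FF} to its application to $g$, i.e.\ justifying the coarea/isoperimetric argument for a function that is merely in $\lp{M}$ with $\lu{M}$ gradient; here the finiteness of the superlevel sets' measure, guaranteed by the spectral gap, is what makes the argument go through.
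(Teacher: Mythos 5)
Your proof of the inclusion $\wthuR{M}\subseteq\gXum{M}$ is essentially sound, but the other inclusion rests on a false claim: the shifted (local) Riesz transform $\nabla(\si+\cL)^{-1/2}$ does \emph{not} map $\lu{M}$ into $\lu{M}$. This fails already on $\BR^n$: $\nabla(I-\Delta)^{-1/2}$ is a convolution operator whose kernel has a Calder\'on--Zygmund singularity of order $\mod{x}^{-n}$ at the origin and is not a finite measure, so it cannot be $L^1$-bounded --- the failure of precisely this boundedness is the reason Hardy-type spaces enter the subject at all. Concretely, your sketch of the proof handles only the terms $T_j(\cD)$ with $j\geq 1$; the obstruction sits entirely in the local term $j=0$, which you never discuss. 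Inserting the gradient into the estimates of Proposition~\ref{p: resolvent bounded} costs a factor comparable to $\mod{t}^{-1}$ in the $\lu{M}\to\lp{M}$ bound for $\nabla\cJ_{N-1/2}(t\cD)$, so the relevant integral becomes
$$
\int_{-1}^{1}\bigmod{P_N(\cO)(\om_0\,\wh m)(t)}\,\mod{t}^{-n/p'-1}\wrt t,
$$
which diverges for every $p\in[1,2]$, since $P_N(\cO)(\om_0\,\wh m)$ is bounded but does not vanish at $t=0$. No choice of $N$ or $p$ repairs this. The fix is immediate, and it is exactly what the paper does: you have $h\in\ghu{M}$, not merely $h\in\lu{M}$, and the correct statement --- that $\nabla(\si+\cL)^{-1/2}$ maps $\ghu{M}$ into $\lu{M}$ for $\si$ large enough --- is available as \cite[Theorem~8]{MVo}. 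Invoking it in place of your false $L^1$-boundedness claim makes your first inclusion coincide with the paper's argument.

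For the converse inclusion, your route genuinely differs from the paper's. Both arguments extract $\cL^{-1/2}f\in\lu{M}$ from the Cheeger--Sobolev inequality \eqref{f: FF}; but where the paper then upgrades to $\ghu{M}$ by functional calculus --- writing $\cU_{\si}^{-1/2}f=\vp(\cL)f+\sqrt{\si}\,\cL^{-1/2}f\in\lu{M}$ via $L^1$-sectoriality of $\cL$ and then $\cL^{-1/2}f=(\si+\cL)^{-1/2}\cU_{\si}^{-1/2}f\in\ghu{M}$ by Proposition~\ref{p: resolvent bounded} --- you instead prove and use the embedding $W^{1,1}(M)\incluso\ghu{M}$. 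Your embedding argument (partition of unity, uniform local Sobolev inequality, \eqref{f: normah1funzioneL2}) is correct under the standing bounded-geometry assumptions and makes Proposition~\ref{p: resolvent bounded} unnecessary for this theorem, at the price of importing the uniform local $W^{1,1}_0(B)\incluso L^{n/(n-1)}(B)$ inequality, which the paper never states. Your care in justifying the application of \eqref{f: FF} to the non-compactly-supported function $g=\cL^{-1/2}f$ (integrability in $\lp{M}$ for $p'>n$ via subordination and the spectral gap, finiteness of the measure of superlevel sets, coarea) addresses a point the paper passes over in silence, and is a genuine merit of your write-up; just note that the superlevel sets need not be relatively compact, so extending the isoperimetric inequality to them requires the approximation argument you allude to (e.g.\ cutting with balls along radii where the slice area is small).
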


\begin{proof}
First we prove that $\gXum{M} \subseteq \wthuR{M}$.  Suppose that $f$ is in $\gXum{M}$, and let $\si>0$.
Then, by Proposition \ref{p: useful property}, there exists $g$ in $\ghu{M}$ such that $\cU_\si^{1/2} g = f$, and therefore
$$
\begin{aligned}
\bigmod{\cR f}
& = \bigmod{\nabla (\si+ \cL)^{-1/2} g}.
\end{aligned}
$$
It is well known that the local Riesz transform $\nabla (\si+\cL)^{-1/2}$
maps $\ghu{M}$ to $\lu{M}$, provided $\si$ is large enough \cite[Theorem 8]{MVo}, hence $\bigmod{\cR f}$ belongs to $\lu{M}$.  Furthermore,
$f$ itself belongs to $\ghu{M}$, because $\gXum{M} \subseteq \ghu{M}$.
Thus, $f$ is in $\wthuR{M}$, as required.  

Next we prove that $\wthuR{M} \subseteq \gXum{M}$.  Suppose that $f$ is in $\wthuR{M}$.
Then $f$ is in $\ghu{M}$ and $\bigmod{\cR f}$ is in $\lu{M}$.  By the inequality \eqref{f: FF}, $\cL^{-1/2} f$ is in $\lu{M}$.  
Choose $\si > \be^2-b$.
Notice that 
$\ds \frac{\sqrt{\si+z}}{\sqrt z} = \vp(z) + \frac{\sqrt{\si}}{\sqrt z}$, where
the function $\vp$ belongs to the class $\cE(S_\te)$ for all $\te$ in $(0,\pi)$.
Since $\cL$ generates a contraction semigroup on $L^1(M)$, $\cL$ is a sectorial operator of angle $\pi/2$ on $\lu{M}$, and therefore $\vp(\cL)$ is bounded on $\lu{M}$ 
\cite[Theorem~2.3.3]{Haa}.  We already know that $\cL^{-1/2}f$ belongs to $\lu{M}$, whence $\cU_{\si}^{-1/2}f = \vp(\cL) f + \sqrt{\si} \cL^{-1/2} f$
is in $\lu{M}$.

Now, set $g = \cU_{\si}^{-1/2}f$.  Then  
$
\cL^{-1/2} f 
= (\si+\cL)^{-1/2} g
$, which 
belongs to $\ghu{M}$ by Proposition~\ref{p: resolvent bounded}.
Since $f$ belongs to $\ghu{M}$ by assumption, we conclude that $f \in \ghu{M}$ by Proposition \ref{p: useful property}.
\end{proof}


\begin{thebibliography}{HLMMY}

\bibitem[A]{A1} J.-Ph. Anker,
Sharp estimates for some functions of the Laplacian
on noncompact symmetric spaces,
\emph{Duke Math. J.} \textbf{65} (1992), 257--297.

\bibitem[AMR]{AMR} P. Auscher, A. McIntosh and E. Russ,
Hardy spaces of differential forms on Riemannian manifolds,
\emph{J. Geom. Anal.} \textbf{18} (2008), 192--248.

\bibitem[B]{B} H. Brezis, 
\emph{Functional analysis, Sobolev spaces and partial differential equations},
Universitext, Springer, New York, 2011.

\bibitem[Br]{Br} R. Brooks,
A relation between growth and the spectrum of the Laplacian,
\emph{Math. Z.} \textbf{178} (1981), 501--508.

\bibitem[BDL]{BDL} T.A. Bui, X.T. Duong and F.K. Ly,
Maximal function characterizations for new local Hardy-type spaces on spaces of homogeneous type,
\emph{Trans. Amer. Math. Soc} \textbf{370} (2018), 7229--7292.

\bibitem[BGS]{BGS} D.L. Burkholder, R.F. Gundy and M.L. Silverstein,
A maximal function characterization of the class $H^p$,
\emph{Trans. Amer. Math. Soc.} \textbf{157} (1971), 137--153.

\bibitem[CMM1]{CMM1} A. Carbonaro, G. Mauceri and S. Meda, 
$H^1$ and $BMO$ for certain locally doubling metric measure spaces,  
\emph{Ann. Scuola Norm. Sup. Pisa Cl. Sci.} \textbf{8} (2009), 543--582.

\bibitem[CMM2]{CMM2} A. Carbonaro, G. Mauceri and S. Meda, 
$H^1$ and $BMO$ for certain locally doubling metric measure
spaces of finite measure, \emph{Colloq. Math.}
\textbf{118} (2010), 13--41.

\bibitem[Ce]{Ce} D. Celotto, 
\emph{Riesz transforms, spectral multipliers and Hardy spaces on graphs}, 
Ph.D.\ Thesis, Universit\`a di Milano-Bicocca, 2016, \texttt{https://boa.unimib.it/handle/10281/118889}. 

\bibitem[CM]{CM} D. Celotto and S. Meda, 
On the analogue of the Fefferman--Stein theorem on graphs with the Cheeger property, 
\emph{Ann. Mat. Pura Appl.} \textbf{197} (2018), 1637--1677.

\bibitem[Ch]{Ch} I. Chavel, \emph{Isoperimetric inequalities}, Cambridge Tracts in Mathematics \textbf{145}, Cambridge University Press, 2001.

\bibitem[CG]{CG} M.~Christ and D.~Geller, 
Singular integral characterizations of Hardy spaces on homogeneous groups,
\emph{Duke Math. J.} \textbf{51} (1984), 547--598.

\bibitem[Coi]{Coi} R.R.~Coifman, A real variable characterisation of $H^p$,
\emph{Studia Math.} \textbf{51} (1974), 269--274.

\bibitem[CGM]{CGM} M. Cowling, S. Giulini and S. Meda, $L^p$-$L^q$ estimates for functions of the Laplace-Beltrami
operator on noncompact symmetric spaces. I, \emph{Duke Math. J.} \textbf{72} (1983), 109--150. 

\bibitem[CW]{CW} R.R. Coifman and G. Weiss, Extension of Hardy spaces and their use in analysis,
\emph{Bull. Amer. Math. Soc.} \textbf{83} (1977), 569--645.

\bibitem[DKKP]{DKKP} S. Dekel, G. Kerkyacharian, G. Kyriazis and P. Petrushev, Hardy spaces
associated with non-negative self-adjoint operators, \emph{Studia Math.} \textbf{239} (2017), 17--54.

\bibitem[DJ]{DJ} J.~Dziuba\'nski and K.~Jotsaroop, On Hardy and BMO spaces for Grushin operator,  
\emph{J. Fourier Anal. Appl.} \textbf{22} (2016), 954--995.

\bibitem[DW]{DW} J.~Dziuba\'nski and B.~Wr\'obel,  Strong continuity on Hardy spaces, 
\emph{J. Approx. Th.} \textbf{211} (2017), 85--93.

\bibitem[DZ1]{DZ1} J.~Dziuba\'nski and J.~Zienkiewicz,  Hardy space $H^1$ associated to Schr\"odinger operator with
potential satisfying reverse H\"older inequality,
\emph{Rev. Mat. Iberoam.} \textbf{15} (1999), 279--296.

\bibitem[DZ2]{DZ2} J.~Dziuba\'nski and J.~Zienkiewicz,  A characterization of Hardy spaces associated with certain
Schr\"odinger operators,
\emph{Potential Anal.} \textbf{41} (2014), 917--930.

\bibitem[FS]{FS} C. Fefferman and E.M.~Stein, Hardy spaces of several variables, 
\emph{Acta Math.} \textbf{129} (1972), 137--193.

\bibitem[Go]{Go} D. Goldberg, A local version of real Hardy spaces, 
\emph{Duke Math. J.} \textbf{46} (1979), 27--42.

\bibitem[Gr]{Gr1} A. Grigor'yan,  Estimates of heat kernels on 
Riemannian manifolds, in \emph{Spectral Theory and Geometry},
ICMS Instructional Conference Edinburgh 1988
(B.~Davies and Y.~Safarov eds.), London Mathematical Society Lecture
Note Series \textbf{273}, Cambridge University Press, 1999.

\bibitem[Haa]{Haa} M.~Haase, 
\emph{The functional calculus for sectorial operators},
Operator theory, Advances and applications \textbf{169}, Birkh\"auser Verlag, 2006.  

\bibitem[HMY]{HMY} Y. Han, D. M\"uller and D. Yang, A theory of Besov and Triebel--Lizorkin spaces on metric measure spaces
modeled on Carnot--Carathéodory spaces. \emph{Abstr. Appl. Anal.} (2008), Article ID 893409.

\bibitem[H1]{H1}  S.~Helgason,
\emph{Groups and geometric analysis}, Academic Press, New York, 1984.

\bibitem[H2]{H} S.~Helgason,
\emph{Geometric analysis on symmetric spaces},
Math. Surveys \& Monographs \textbf{39}, American Mathematical Society, 1994.

\bibitem[H3]{H3} S.~Helgason,
The Abel, Fourier and Radon transforms on symmetric spaces,
\emph{Indag. Math. (N.S.)} \textbf{16} (2005), 531--551.

\bibitem[HLMMY]{HLMMY} 
S. Hofmann, G. Lu, D. Mitrea, M. Mitrea and L. Yan, Hardy spaces associated to
non-negative self-adjoint operators satisfying Davies--Gaffney estimates, 
\emph{Mem. Amer. Math. Soc.} \textbf{214} (2011), no. 1007.

\bibitem[Io]{Io}  A.D. Ionescu, Fourier integral operators on noncompact 
symmetric spaces of real rank one, \emph{J. Funct. Anal.} 
\textbf{174} (2000), 274--300.

\bibitem[La]{La} R. Latter, A decomposition of $H^p(\BR^n)$ in terms
of atoms, \emph{Studia Math.} \textbf{62} (1978), 92--101.

\bibitem[Le]{Le} N.N.~Lebedev, \emph{Special functions and their applications},
Dover Publications, 1972.

\bibitem[Lo]{Lo} N. Lohou\'e, Transform\'ees de Riesz et fonctions sommables, \emph{Amer. J. Math.} \textbf{114} (1992),
875--922.

\bibitem[MaMV]{MMV} A. Martini, S. Meda and M. Vallarino, Hardy type spaces defined via maximal functions on certain nondoubling manifolds, preprint.

\bibitem[MOV]{MOV} A. Martini, A.~Ottazzi and M.~Vallarino, 
Spectral multipliers for sub-Laplacians on solvable
extensions of stratified groups,
\emph{J. Anal. Math.} \textbf{136} (2018), 357--397.

\bibitem[MMV1]{MMV0} G. Mauceri, S. Meda and M. Vallarino, 
Estimates for functions of the Laplace--Beltrami 
operator on manifolds with bounded geometry, \emph{Math. Res. Lett.} \textbf{16} (2009),  
861--879.

\bibitem[MMV2]{MMV1} G. Mauceri, S. Meda and M. Vallarino, 
Hardy-type spaces on certain noncompact manifolds and applications, 
\emph{J. London Math. Soc.} \textbf{84} (2011), 243--268.

\bibitem[MMV3]{MMV2} G. Mauceri, S. Meda and M. Vallarino, 
Atomic decomposition of Hardy type spaces on certain noncompact manifolds,
\emph{J. Geom. Anal.} \textbf{22} (2012), 864--891.

\bibitem[MMV4]{MMV4} G. Mauceri, S. Meda and M. Vallarino, 
Endpoint results for 
spherical multipliers
on noncompact symmetric spaces, 
\emph{New York J. Math.} \textbf{23} (2017), 1327--1356.

\bibitem[MVe]{MVe} S. Meda and G. Veronelli, Characterisation of Hardy-type spaces 
via Riesz transform on certain Riemannian manifolds, preprint.

\bibitem[MVo]{MVo} S. Meda and S. Volpi, Spaces of Goldberg type on 
certain measured metric spaces,  \emph{Ann. Mat. Pura Appl.} \textbf{196} (2017), 947--981.

\bibitem[SC]{SC} L. Saloff-Coste,
\emph{Aspects of Sobolev-type inequalities}, 
London Mathematical Society Lecture
Note Series \textbf{289}, Cambridge University Press, 2002.

\bibitem[SS]{SS} R.P. Sarkar and A. Sitaram,
The Helgason Fourier transform for symmetric spaces,
\emph{A tribute to C. S. Seshadri (Chennai, 2002)},
Trends Math., Birkh\"auser, 2003, 467--473.

\bibitem[T]{T} M.E. Taylor, Hardy spaces and bmo on manifolds with bounded geometry, 
\emph{J. Geom. Anal.} \textbf{19} (2009), 137--190.

\bibitem[U1]{Umax} A. Uchiyama,
A maximal function characterization of $H^p$ on the space of homogeneous type,
\emph{Trans. Amer. Math. Soc.} \textbf{262} (1980), 579--592.

\bibitem[U2]{U} A. Uchiyama,
A constructive proof of the Fefferman--Stein decomposition of $BMO(\BR^n)$, 
\emph{Acta Math.} \textbf{148} (1982), 215--241.

\bibitem[V]{V} M.~Vallarino, 
Spaces $H^1$ and $BMO$ on $ax+b$-groups, \emph{Collect. Math.} \textbf{60}
(2009), 277--295.

\bibitem[Vo]{Vo} S. Volpi, \emph{Bochner--Riesz means of eigenfunction expansions and 
local Hardy spaces on manifolds with bounded geometry}, Ph.D.\ Thesis, 
Universit\`a di Milano--Bicocca, 2012, \texttt{https://boa.unimib.it/handle/10281/29105}.

\bibitem[YZ1]{YZ1} D. Yang and Y. Zhou, Radial maximal function characterizations 
of Hardy spaces on RD-spaces and their applications, 
\emph{Math. Ann.} \textbf{346} (2010), no. 2, 307--333.

\bibitem[YZ2]{YZ2} D. Yang and Y. Zhou, Localized Hardy spaces $H^1$ related 
to admissible functions on RD-spaces and applications to Schr\"odinger operators, 
\emph{Trans. Amer. Math. Soc.} \textbf{363} (2011), 1197--1239.

\end{thebibliography}
\end{document}